\title{On the geometric trace of a generalized Selberg trace formula}
\author{Andr\'as Bir\'o}
\address{Alfr\'ed R\'enyi Institute of Mathematics,\\
Budapest, Hungary\\}
\email{biro.andras@renyi.hu}
\author{D\'avid  
T\'oth}
\address{Alfr\'ed R\'enyi Institute of Mathematics,\\
Budapest, Hungary\\and\\Department of Computer Science and Information Theory\\
Budapest University of Technology and Economics}
\email{toth.david@renyi.hu}
\thanks{
The research reported in this paper is supported by the “TKP2020,
National Challenges Program” of the National Research, Development and
Innovation Office (BME NC TKP2020) and by
 the Ministry of Innovation and
Technology and the National Research, Development and Innovation
Office within the Artificial Intelligence National Laboratory of Hungary.
It is also supported by the MTA–RI Lendület "Momentum" Analytic Number Theory and Representation Theory Research Group, by the NKFIH (National Research, Development and Innovation Office) grants FK 135218 (D.T.), K135885 and K 143876 (A.B.) and by the Rényi Intézet Lendület Automorphic Research Group.}
\theoremstyle{plain}
\newtheorem{thm}{Theorem}[section]
\newtheorem{prop}[thm]{Proposition}
\newtheorem{lemma}[thm]{Lemma}
\newcommand{\N}{\mathbb{N}}
\newcommand{\R}{\mathbb{R}}
\newcommand{\C}{\mathbb{C}}
\renewcommand{\C}{\mathbb{C}}
\newcommand{\Q}{\mathbb{Q}}
\newcommand{\Z}{\mathbb{Z}}
\newcommand{\OO}{\mathcal{O}}
\newcommand{\HH}{\mathbb{H}}
\newcommand{\ds}{\displaystyle}
\newcommand{\eps}{\varepsilon}
\newcommand{\abs}[1]{\left|#1\right|}
\newcommand{\ar}{\begin{array}{c}}
\newcommand{\ra}{\end{array}}
\newcommand{\arr}{\begin{array}{cc}}
\newcommand{\Tr}{\textrm{Tr}}
\newcommand{\im}{\textrm{Im}\,}
\newcommand{\ree}{\mathrm{Re}\,}
\newcommand{\mtx}[4]{\left[\begin{array}{cc}#1 & #2 \\ #3 & #4\end{array}\right]}
\newcommand{\mtxx}[5]{\left[\begin{array}{cc}\ds{#1} & \ds{#2} \\[#5mm] \ds{#3} & \ds{#4}\end{array}\right]}
\newcommand{\mtxskip}[4]{\left[\begin{array}{cc}#1 & #2 \\[2mm] #3 & #4\end{array}\right]}
\newcommand{\tr}{\textrm{tr}\,}
\newcommand{\psl}{PSL(2,\R)^n}
\newcommand{\tht}{\vartheta}
\newcommand{\tk}{\mathbf{t}}
\newcommand{\id}[1]{\mathfrak{#1}}
\begin{document}

\maketitle

\begin{abstract}
A certain generalization of the Selberg trace formula was proved by the first named author in 1999. In this generalization instead of considering the integral of $K(z,z)$ (where $K(z,w)$ is an automorphic kernel function) over the fundamental domain, one considers the integral of $K(z,z)u(z)$, where  $u(z)$ is a fixed automorphic eigenfunction of the Laplace operator. This formula was proved for discrete subgroups of $PSL(2,\R)$, and just as in the case of the classical Selberg trace formula it was obtained by evaluating in two different ways ("geometrically" and "spectrally") the integral of $K(z,z)u(z)$.

In the present paper we work out the geometric side of a further generalization of this generalized trace formula: we consider the case of discrete subgroups of $PSL(2,\R)^n$ where $n>1$. Many new difficulties arise in the case of these groups due to the fact that the classification of conjugacy classes is much more complicated for $n>1$ than in the case $n=1$. 
\end{abstract}

\section{Introduction}

\subsection{The Selberg trace formula and its generalizations}

The Selberg trace formula (introduced by A. Selberg, see \cite{selberg})
is a particularly important tool in the theory of automorphic functions, it has many applications
in different branches of mathematics. 
Briefly speaking,  it is obtained by computing the integral
\[
\Tr K = \int_F K(z,z)\,d\mu(z)
\]
in two different ways ("geometrically" and "spectrally"), where
$d\mu(z)=y^{-2}\,dx\,dy$ is the usual measure on the complex upper half-plane $\HH$, 
$F\subset \HH$ is the fundamental domain of a finite volume Fuchsian group $\Gamma\leq PSL(2,\R)$
acting on $\HH$,
and $K(z,w)$ is an appropriate automorphic kernel function (i.e. a function that is invariant
under the action of $\Gamma$). 

A. Bir\'o obtained a generalization of this formula in \cite{biro1} 
by evaluating the integral
\begin{align}\label{1_1_general_trace_fromula}
\Tr K = \int_F K(z,z)u(z)\,d\mu(z),
\end{align}
where the weight $u(z)$ is a Maass form (i.e. an automorphic eigenfunction of 
the Laplace operator).
Biró applied his
generalization to the the hyperbolic circle problem (see \cite{biro2018local}), and some ideas of \cite{biro1} are applied for cycle integral and triple product identities in \cite{biro2000cycle} and \cite{biro2011relation}.

Selberg's trace formula was developed for a general family of groups, and our
aim is to work out the geometric side of Bir\'o's generalized formula
for discrete subgroups of $PSL(2,\R)^n$ where $n>1$. 
For these, the details of the Selberg trace formula are given in 
\cite{efrat}.

\subsection{Discrete subgroups of 
\texorpdfstring{$PSL(2,\R)^n$}{PSL(2,R)n}}

The main example of the discrete subgroups of $PSL(2,\R)^n$ 
is the Hilbert modular group.
For any totally real finite extension $\Q\leq K$ of degree $n>1$ it is defined
as
\[
\Gamma_K:=\left\{\left(\mtx{ a^{(1)} }{ b^{(1)} }{ c^{(1)} }{ d^{(1)} }, 
\dots, \mtx{ a^{(n)} }{ b^{(n)} }{ c^{(n)} }{ d^{(n)} }\right):\,
\mtx{ a }{ b }{ c }{ d }\in PSL(2,\OO_K)\right\},
\]
where $K^{(1)},\dots,K^{(n)}$ are the different embeddings of $K$ into $\R$, and
the images of an element $a\in K$ by these embeddings  are $a^{(1)},\dots,a^{(n)}$.
As usually, $\OO_{K}$ denotes the ring of integers in $K$.

The generalized trace formula (derived from (\ref{1_1_general_trace_fromula})) 
is fully worked out in the PhD thesis \cite{dissertation} in the special case
when $\Gamma$ is a Hilbert modular group for a \emph{quadratic field} \emph{of class number one}. 
Though a more general situation is handled here, the Hilbert modular groups still play an 
important role in the following. Before specifying this role, we give a short summary of
the main results about discrete subgroups of $PSL(2,\R)^n$.
Many of these are proved in \cite{freitag}.

\subsubsection{Action on the product of upper half-planes.}

The group $PSL(2,\R)$ acts on $\HH$ in the usual way,
if $\gamma = \mtx{a}{b}{c}{d}\in PSL(2,\R)$ and $z\in \HH$, then
\begin{align}\label{1_2_1_action}
\gamma z = \frac{az+b}{cz+d}.
\end{align}
This induces a coordinate-wise action of $\psl$ on the product space $\HH^n$. 
For an element $z\in \HH^n$ we  will use the 
notation $z=(z_1,\dots,z_n)$ where $z_k=x_k+iy_k\in \HH$
with $x_k\in\R$ and $y_k\in\R^+$ ($k=1,\dots,n$).
That is,  if $\gamma=(\gamma^{(1)},\dots, \gamma^{(n)})\in\psl$ and $z\in\HH^n$, then 
$\gamma z=(\gamma^{(1)}z_1,\dots,\gamma^{(n)}z_n)$. It is convenient to represent 
an element $\gamma\in\psl$ as in the one dimensional case, i.e. as a matrix 
$\gamma=\mtx{a}{b}{c}{d}$, whose elements are $n$ dimensional 
row vectors with real coordinates, e.g. $a=(a^{(1)},\dots,a^{(n)})$. Then the action
of $\psl$ on $\HH^n$ can be written formally as in (\ref{1_2_1_action}), 
where the operations are meant to be performed coordinate-wise.
It is known that $\Gamma\leq PSL(2,\R)^n$ is discrete if and only if it acts discontinuously on
$\HH^n$ (see Proposition 2.1 in \cite{freitag}).

\subsubsection{Irreducible groups}

The groups $\Gamma,\Gamma'\leq PSL(2,\R)^n$ are said to be \emph{strictly commensurable}
if $\Gamma\cap \Gamma'$ has finite index in both $\Gamma$ and $\Gamma'$. They are said to
be \emph{commensurable} if $\Gamma$ is strictly commensurable 
with a conjugate of $\Gamma'$.
A discrete subgroup $\Gamma\leq \psl$ is said to be 
\emph{irreducible} if $\Gamma$ is not
commensurable with any direct product $\Gamma'\times\Gamma''$, where $\Gamma'$ and $\Gamma''$
are  discrete subgroups of some non-trivial groups $G'$ and $G''$, respectively, for which
$\psl=G'\times G''$ holds.
For a discrete subgroup $\Gamma$ of $\psl$ we have the following equivalent conditions
for irreducibility (see the Corollary after Theorem 2 in \cite{shimizu}):
\begin{enumerate}
\item[(i)] $\Gamma$ contains no element $\gamma=(\gamma^{(1)},\dots,\gamma^{(n)})$ such
that $\gamma^{(i)}=1$ holds for some $i$ while $\gamma^{(j)}\neq 1$ holds for some $j$,
\item[(ii)] there exist no partial product $G'$ of $\psl$ such that
the projection of $\Gamma$ to $G'$ is discrete,
\item[(iii)] for every $\gamma\in\Gamma\setminus\{1\}$ the centralizer of $\gamma$ in $\psl$
is commutative.
\end{enumerate}
In the following $\Gamma$ always denotes an  irreducible subgroup.  
Note that 
property (i) above implies that the Hilbert modular group is irreducible.

\subsubsection{Cusps.}

The group $\psl$ and hence its subgroup  $\Gamma$  act on the set $(\R\cup\{\infty\})^n$.
This action is also given by (\ref{1_2_1_action}) (using the usual extended operations
on the set $\R\cup \{\infty\}$).
 Roughly speaking, an element of $(\R\cup\{\infty\})^n$
is a \emph{cusp} if its stabilizer in $\Gamma$ is in some sense as large as possible.

First, let us consider the element $\infty=(\infty,\dots,\infty)$.
The stabilizer $\Gamma_\infty$ of $\infty$  contains elements 
of the form 
\[
\gamma=\mtx{a}{b}{0}{d},
\]
and
among them there are those for which all the coordinates of the vectors $a$ and $d$ 
are $1$, i.e. the translations. Let us define
\[
\tk_\infty=\tk_\infty(\Gamma)=\left\{b\in\R^n:\mtx{1}{b}{0}{1}\in\Gamma\right\},
\]
then, since $\Gamma$ is discrete, $\tk_\infty$ is a discrete subgroup of $\R^n$,
and hence it is isomorphic to
$\Z^m$ for some $1\leq m\leq n$. The important cases are those, for which $m=n$ holds 
(i.e. $\tk_\infty$ is a 
lattice).

For a general element $\gamma\in\Gamma_\infty$ we have 
$d^{(k)}=(a^{(k)})^{-1}$ ($1\leq k\leq n$),
and therefore  $\gamma z=a^2z+ab$ holds for any $z\in\HH^n$  (again, the operations
are accomplished coordinate-wise).
Notice that the the coordinates of the vector $a^2$ are all positive.
The vectors with this property are called \emph{totally positive}\index{totally positive vectors}
and they form the group $(\R^+)^n$ w.r.t. the coordinate-wise multiplication.
An element $\eps\in(\R^+)^n$ is called a \emph{multiplier}\index{multiplier}
for $\Gamma$ if there is a $\gamma\in\Gamma$ for which $\gamma z=\eps z+b$ holds
for some $b\in\R^n$. 
The multipliers   form a subgroup of 
$(\R^+)^n$, it  is denoted by $\Lambda_\infty=\Lambda_\infty(\Gamma)$. 

If $\tk_\infty$ is a lattice, then $\Lambda_\infty$ 
is a discrete subgroup of $(\R^+)^n$ and
for each $\eps\in\Lambda_\infty$ we have $\eps^{(1)}\cdots\eps^{(n)}=1$ (see Remark 2.3 in \cite{freitag}).
Taking the logarithm coordinate-wise, we obtain that
$\log \Lambda_\infty$ is a discrete subgroup of $\R^n$ 
contained
in the hyperplane
\begin{align}\label{1_2_3_hyperplane}
V=\{a\in\R^n:\,a^{(1)}+\dots+a^{(n)}=0\},
\end{align}
and hence cannot be a lattice. 
That is, if $\tk_\infty\cong \Z^n$, then  $\Lambda_\infty\cong \Z^m$ for some
 $0\leq m \leq n-1$.
  We say that $\infty$ is a \emph{cusp} for $\Gamma$ if $\tk_\infty\cong \Z^n$ and 
$\Lambda_\infty\cong \Z^{n-1}$.

Note that
$\infty$ is a cusp for the Hilbert modular group.
Indeed, in this case the $n$ different
embeddings of $K$ induce a map from $\OO_K$ to $\R^n$ 
whose image is well-known to be a lattice that 
$\tk_\infty$ is isomorphic to.
Also, $\Lambda_\infty$ consists of the squares of the units of $\OO_K$. 
By Dirichlet's unit theorem we have that $\OO_K^\times\cong \Z^{n-1}\times \{\pm 1\}$,
hence $\Lambda_\infty\cong \Z^{n-1}$ and our claim follow.

We say 
that an element $\kappa\in(\R\cup\{\infty\})^n$ is a \emph{cusp}
for the discrete group $\Gamma\leq PSL(2,\R)^n$ if $\infty$ is a cusp
for the group $\sigma^{-1} \Gamma\sigma$ for some $\sigma\in PSL(2,\R)^n$ with
$\sigma\infty=\kappa$.
It is not hard to see that in this case $\infty$ is a cusp for $\rho^{-1}\Gamma\rho$ for every element 
$\rho\in PSL(2,\R)^n$ with $\sigma\infty = \rho\infty=\kappa$. Moreover, 
we have in fact 
$\Lambda_\infty(\sigma^{-1}\Gamma\sigma)=\Lambda_\infty(\rho^{-1}\Gamma\rho)=:\Lambda_\kappa$,
hence this group is determined uniquely by the cusp $\kappa$ and it is called the
\emph{multiplier group for $\kappa$}. Note that $\tk_\infty(\sigma^{-1}\Gamma\sigma)$
is determined only up to a coordinate-wise scalar multiple. Having this in mind, we will use the notation
$\tk_\kappa$ for such a lattice.

\subsubsection{Fundamental domain}

Let $X$ be a locally compact second-countable topological space, and assume that $\Gamma$ is a 
group of topological (i.e. bijective, continuous and 
open) maps of $X$ onto itself. 
The set $F\subset X$ is called a \emph{fundamental set} for $\Gamma$ if
\begin{align}\label{1_2_4_fundamental_set}
X=\bigcup_{\gamma\in\Gamma}\gamma(F).
\end{align}
For example the space $X$ itself is always a fundamental set. Of course 
we are interested in fundamental sets
that are in some sense as small as possible.

Also, we would like to consider measurable sets, so assume further that a 
$\Gamma$-invariant Radon measure $\mu$ is given on $X$. Then a measurable set $F\subset X$ is called a 
\emph{fundamental domain}\index{fundamental domain} for $\Gamma$ if (\ref{1_2_4_fundamental_set}) holds,
and there is a set $S\subset F$ such that $\mu(S)=0$ and different  points
of $F\setminus S$ are not on the same $\Gamma$-orbit.

Every measurable fundamental set contains a fundamental domain
(see Appendix II in \cite{freitag}), hence
there exists a fundamental domain $F\subset \HH^n$ 
for every 
irreducible 
discrete subgroup $\Gamma\leq\psl$.
In the following we always assume that the volume of $F$ is finite.
This holds of course when the quotient space $\HH^n\setminus \Gamma$ 
is compact. However, this quotient is not compact once there are cusps for $\Gamma$
since in that case $F$ contains parts that stretch out to the boundary of $\HH^n$.
Now we are in the position to state the following important theorem
(see Theorem I.1.5 in \cite{efrat}):
\begin{thm}\label{1_3_commensurable_thm}
Assume that $n\geq 2$ and $\Gamma\leq \psl$ is an irreducible discrete subgroup whose 
fundamental domain has finite volume but the quotient space $\HH^n\setminus\Gamma$ 
is not compact. Then $\Gamma$ is commensurable with a Hilbert modular group for a number field 
$K$ of degree $n$.
\end{thm}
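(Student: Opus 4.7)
The plan is to construct a totally real number field $K$ of degree $n$ directly from the cusp data of $\Gamma$, and then exhibit $\Gamma$ as commensurable with $\Gamma_K$. Since $\GH^n$ is non-compact but has finite volume, $\Gamma$ has at least one cusp, and after conjugating in $\psl$ we may assume the cusp is $\infty$. Then $\tk_\infty \cong \Z^n$ is a full lattice in $\R^n$ and $\Lambda_\infty \cong \Z^{n-1}$. A short calculation shows that if $\gamma : z \mapsto \eps z + b$ lies in $\Gamma_\infty$ and $\tau : z \mapsto z + t$ with $t \in \tk_\infty$, then $\gamma \tau \gamma^{-1} : z \mapsto z + \eps t$, so the multiplier group acts on $\tk_\infty$ by coordinate-wise multiplication. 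Let $R \subset \mathrm{End}_{\Z}(\tk_\infty)$ be the $\Z$-subalgebra generated by $\Lambda_\infty$.

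I then claim $R \otimes \Q$ is a totally real number field of degree $n$. The algebra $R$ is commutative since $\Lambda_\infty$ is abelian, and the $n$ coordinate projections $\pi_k : \eps \mapsto \eps^{(k)}$ extend to $n$ pairwise distinct $\Q$-algebra homomorphisms $R \otimes \Q \to \R$; distinctness follows from $\log \Lambda_\infty$ being a full-rank lattice in the hyperplane $V$ of (\ref{1_2_3_hyperplane}), which forces some $\eps \in \Lambda_\infty$ to separate any given pair of coordinates. Simultaneous diagonalization of the $R$-action on $\tk_\infty \otimes \R$ shows that these are \emph{all} the characters of $R \otimes \R$, so $R \otimes \Q$ is an étale $\Q$-algebra of dimension exactly $n$ and splits as a product of totally real number fields $K_1 \times \cdots \times K_r$ with $\sum \dim_{\Q} K_i = n$. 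Since $\Lambda_\infty$ sits inside $(R \otimes \Q)^\times$ with rank $n-1$, Dirichlet's unit theorem applied to the product gives the upper bound $n - r$ on this rank, forcing $r = 1$. Hence $K := R \otimes \Q$ is a single totally real field of degree $n$.

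Because $\tk_\infty \otimes \Q$ is a one-dimensional $K$-vector space, after absorbing a single totally positive rescaling (i.e.\ conjugating $\Gamma$ by a further diagonal element fixing $\infty$) we may identify $\tk_\infty$ with the standard image in $\R^n$ of a fractional $\OO_K$-ideal $\ga \subset K$. Under this identification $\Lambda_\infty$ becomes a finite-index subgroup of the squares of the units of $\OO_K$, so $\Gamma_\infty$ is commensurable with the parabolic stabilizer of $\infty$ in the Hilbert modular group $\Gamma_K$.

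The main obstacle is promoting this commensurability from cusp stabilizers to all of $\Gamma$. The strategy, following Shimizu \cite{shimizu}, is: for an arbitrary $\gamma \in \Gamma$ with $\gamma \infty = \kappa$, the conjugate $\gamma \Gamma_\infty \gamma^{-1}$ is the parabolic stabilizer at $\kappa$, and repeating the construction at $\kappa$ yields the \emph{same} field $K$ by irreducibility (condition (iii) ensures that the centralizer of any non-identity element is abelian, preventing extraneous algebraic structure from appearing at other cusps or at hyperbolic fixed points). Tracking the conjugation then forces the matrix entries of $\gamma$ to lie in $K$ with denominators bounded uniformly in terms of $\ga$, so $\Gamma$ lies in a conjugate of $PSL(2,K)$ with bounded denominators and is therefore commensurable with $\Gamma_K$. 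This last step is the delicate one, since it requires controlling arbitrary elements of $\Gamma$ starting from purely local information at a single cusp; without irreducibility $K$ could split as a product and $\Gamma$ could decompose as a product of smaller arithmetic lattices, which is precisely the scenario ruled out by the hypothesis.
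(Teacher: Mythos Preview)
The paper does not prove this theorem at all: it is quoted as a known result, with the reference ``see Theorem I.1.5 in \cite{efrat}'' (and the remark immediately following credits the identification of $K$ to the proof of Theorem~4 in \cite{shimizu}). So there is no in-paper argument to compare against; your write-up is an outline of the classical Shimizu argument that the paper is merely citing.

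As an outline your approach is on the right track and matches the literature: build $K$ from the $\Lambda_\infty$-action on $\tk_\infty$, use Dirichlet to force the \'etale algebra to be a single field, then push the identification out from the cusp. Two honest gaps remain. First, you invoke ``$\tk_\infty\cong\Z^n$ and $\Lambda_\infty\cong\Z^{n-1}$'' as if it were automatic from non-compactness and finite volume, but that a parabolic fixed point is a cusp in this precise sense is itself a nontrivial step (this is Theorem~3 of \cite{shimizu}, cited later in the paper); you should either prove it or flag it as input. Second, you correctly identify the globalization step as ``the delicate one'' but do not carry it out: saying that conjugation ``forces the matrix entries of $\gamma$ to lie in $K$ with denominators bounded uniformly'' is exactly the content that needs an argument, and irreducibility enters more concretely than via condition~(iii) alone. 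As written this is a plausible roadmap rather than a proof; for the purposes of this paper the citation suffices.
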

In addition, the field $K$ in the theorem above is generated by the elements of
$\Lambda_\kappa$ for any cusp $\kappa$  
(see the proof of Theorem 4 in \cite{shimizu}).
Note that if $\Gamma$ is commensurable with the Hilbert modular group, then
its fundamental domain is of finite volume. 

From now on, $\Gamma$ always denotes a discrete irreducible subgroup of $\psl$ with a fundamental domain
of finite volume, and we will assume also that $\Gamma$ has at least one cusp. 
The group $\Gamma$ acts on the set of its cusps.
It is well known that the number of the equivalence classes (orbits) of the 
cusps for a Hilbert modular group is the class number of the corresponding field $K$ (see \cite{siegel}, Proposition 20 on p. 188). It follows from this and from the theorem above, that the
number of the equivalence classes of the cusps for $\Gamma$ is finite.

To describe the 
fundamental domain of $\Gamma$ we first define the cusp regions.
We set
\[
U_C:=\{z\in\HH^n:\,Ny>C\},
\]
where $C>0$ is a positive real number and $Na=\prod_{k=1}^na^{(k)}$ for any real 
vector $a=(a^{(1)},\dots,a^{(n)})^T$.
If $\kappa$ is a cusp for $\Gamma$ and  $\sigma\in PSL(2,\R)^n$ is a fixed element
for which $\sigma\infty=\kappa$ holds,
then the sets of the form $\sigma(U_C)$ are called the neighbourhoods of $\kappa$. 
Note that the set of these neighbourhoods does not depend on the choice of $\sigma$.

 The stabilizer $\Gamma_\kappa$ of $\kappa$ acts on the sets of the form $U_\kappa=\sigma(U_{C_\kappa})$ 
 ($C_\kappa\in\R^+$, $\sigma\infty=\kappa$). 
To construct a fundamental domain for $\Gamma_\kappa$ in $U_\kappa$, 
it is sufficient to give a fundamental domain $F'_\kappa$ for 
$\sigma^{-1}\Gamma_\kappa\sigma=(\sigma^{-1}\Gamma\sigma)_\infty$ in $U_{C_\kappa}$, and
then the set $F_\kappa:=\sigma(F_\kappa')$ is a fundamental domain
for $\Gamma_\kappa$ in $U_\kappa$.
Recall that
$\mathbf{t_\kappa}$ is a lattice in $\R^n$ 
and that  $\log\Lambda_\kappa$ has rank $n-1$ and it is contained in the hyperplane
$V$ defined in (\ref{1_2_3_hyperplane}).
Let $P_\kappa$ be a fundamental parallelotope for $\mathbf{t_\kappa}$ in $\R^n$ and
let $Q_\kappa$ be a fundamental parallelotope for $\log\Lambda_\kappa$ 
in the vector space $V$.
It is easy to see that
\[
F'_\kappa=
\left\{
z=x+iy\in U_{C_\kappa}:x\in P_\kappa,\, \log\frac{y}{(Ny)^{1/n}}\in Q_\kappa
\right\}
\]
is a fundamental domain for $\sigma^{-1}\Gamma_\kappa\sigma$ in $U_{C_\kappa}$.

To express $F_\kappa$ in a simple way, we introduce the coordinates at the cusp $\kappa$. For
this we fix a \emph{scaling element} $\sigma_\kappa\in\psl$ such that $\sigma_\kappa\infty=\kappa$, a basis $b_1^\kappa,\dots,b_n^\kappa$  in $\R^n$ such that
\[
P_\kappa=\left\{
b\in\R^n:b=\sum_{k=1}^nt_kb_k^\kappa,\,0\leq t_k<1\textrm{ for }1\leq k\leq n
\right\}
\]
is a fundamental parallelotope for $\mathbf{t}_\kappa$, 
and a basis $a_1^\kappa,\dots,a_{n-1}^\kappa$  in $V$ such that
\[
Q_\kappa=\left\{
a\in V:a=\sum_{l=1}^{n-1}s_l a_l^\kappa,\,0\leq s_l<1\textrm{ for }1\leq l\leq n-1
\right\}
\]
is a fundamental parallelotope for $\log\Lambda_\kappa$.
Note that $a_l^{\kappa}=\log \eps_l^{\kappa}$,
where $\eps_1^{\kappa},\dots,\eps_{n-1}^\kappa$ generate the group $\Lambda_\kappa$.
Then for any $z\in\HH^n$ we write $\sigma_\kappa^{-1} z=x'+iy'$ and 
\[
\begin{array}{c}
x'=X_1^\kappa(z) b_1^\kappa+\dots+X_n^\kappa(z) b_n^\kappa,\\
Y_0^\kappa(z)=Ny',\quad
\log \dfrac{y'}{(Ny')^{1/n}}=Y_1^\kappa(z)\log \eps_1^\kappa+\dots+Y_{n-1}^\kappa(z)\log\eps_{n-1}^\kappa.
\end{array}
\]
Once the element $\sigma_\kappa$ and the bases above are fixed, the numbers $X_k^\kappa(z)$ and $Y_l^\kappa(z)$ are uniquely determined and called
the \emph{coordinates of $z$ at the cusp $\kappa$}.  We may simply write 
$X_k^\kappa$ and $Y_l^\kappa$.
Note that we also use the notation
$Y_0(z)=Ny$ or simply $Y_0$ 
(which is the same as $Y_0^\infty(z)$ above once $\infty$ is a cusp and 
we choose $\sigma_\infty=id$).
The fundamental domain of $\Gamma_\kappa$ in $U_\kappa$ can be expressed 
in a simple form in terms of these coordinates:
\[
F_\kappa=\{z\in U_\kappa:\, 
            0\leq X_1^\kappa,\dots,X_n^\kappa<1,\,
            0\leq Y_1^\kappa,\dots,Y_{n-1}^\kappa<1
        \}.
\]

If $\kappa$ and $\kappa'$ are inequivalent cusps of $\Gamma$, 
then there exists neighbourhoods $U$ and $U'$
of $\kappa$ and $\kappa'$, respectively, such that 
$\gamma(U)\cap U'= \emptyset$ holds for any  $\gamma\in\Gamma$ (see Lemma $2.9_2$ in 
\cite{freitag}). Hence, if we fix a maximal set 
$\mathcal{S}$ of 
$\Gamma$-inequivalent cusps, then a real number $C>0$ can be chosen
such that the sets $U_\kappa=\sigma_\kappa(U_C)$ are pairwise disjoint for the cusps in $\mathcal{S}$, 
and the corresponding sets $F_\kappa$ contain at most $1$ point
from every $\Gamma$ orbit.
Finally, the fundamental domain for $\Gamma$ is given in the form 
\[
F=F_0\cup\left(\bigcup_{\kappa\in\mathcal{S}}F_\kappa\right),
\]
where $F_0\subset \HH^n$ is compact.

\subsubsection{Classification of the elements of $\Gamma$}

Recall that an element $\mathrm{id}\neq\gamma\in PSL(2,\R)$ is called 
elliptic, parabolic or hyperbolic, if $\abs{\tr \gamma}<2$,
$\abs{\tr \gamma}=2$ or $\abs{\tr \gamma}>2$, respectively.
An element of $\Gamma$ is called \emph{totally elliptic}
or \emph{totally parabolic},
if each of its components are elliptic or parabolic, respectively.
If there are elements of different types among the components, then this element is called 
\emph{mixed}. Note that if one component of an element is parabolic, then so are the others
by Theorem \ref{1_3_commensurable_thm}. Hence a mixed element consists of elliptic and hyperbolic components.

Before we turn to the case when every component is hyperbolic we examine
the fixed points of the elements. A totally elliptic element has a single fixed point 
$z\in \HH^n$. Since $\Gamma$ acts discontinuously on $\HH^n$, $z$ has a neighborhood $U$
such that the set $\{\gamma\in\Gamma:\, \gamma U\cap U\neq\emptyset\}$ is finite.
This means that a totally elliptic element must be of finite order. A totally parabolic
element fixes a single point in $(\R\cup\{\infty\})^n$.
Since $\Gamma$ is irreducible, the parabolic fixed points are exactly the cusps  
of $\Gamma$ (see Theorem 3 in \cite{shimizu}).
A mixed element with $1\leq m<n$ hyperbolic components fixes $2^{m}$ points in $(\HH\cup\R\cup\{\infty\})^n$.
If every component of $\gamma\in\Gamma$ 
is hyperbolic, then $\gamma$ fixes  $2^n$ points in $(\R\cup\{\infty\})^n$.
Such an element is called \emph{hyperbolic-parabolic}
if  there is cusp among its fixed points. 
Otherwise 
it is called 
\emph{totally hyperbolic}.

\subsection{Fourier expansion of automorphic forms}\label{Fourier_expansion_section}
A function $f:\HH^n\to\C$ is called an \emph{automorphic function}\index{automorphic function} 
with respect to the 
group $\Gamma$ 
if it is invariant under the action of $\Gamma$, that is, $f(\gamma z)=f(z)$ holds for every
$z\in\HH^n$ and $\gamma\in\Gamma$.
An \emph{automorphic form}\index{automorphic form} $u$ is a smooth automorphic function which is 
an eigenfunction of the Laplace operators
\[
\Delta_k=y_k^2
\left(
\frac{\partial^2}{\partial x_k^2}+
\frac{\partial^2}{\partial y_k^2}
\right),\qquad(k=1,\dots,n),
\]
that is, for which the equations
$(\Delta_k+\lambda_k)u=0$
hold with some $\lambda_k\in\C$.
We write these eigenvalues in the form $\lambda_k=s_k(1-s_k)$ for some $s_k\in\C$.

If $u$ is an automorphic form 
and $\kappa$ is a cusp, then $u(\sigma_{\kappa}z)$ is invariant under the action of the translation
operator $T_\alpha u=u(z_1+\alpha_{1},\dots, z_n+\alpha_{n})$ for any $\alpha\in\mathbf{t}_\kappa$,
hence
it has the Fourier expansion
\[
u(z)=\sum_{l\in \mathbf{t}_\kappa^*}\phi(y,l)e^{2\pi i<l,x>},
\]
where $x=(x_1,\dots,x_n)$, $y=(y_1,\dots,y_n)$ and 
$\mathbf{t}_\kappa^*=\{v\in\R^n:\left<v,w\right>\in\Z\,\textrm{ for any }w\in \mathbf{t}_\kappa\}$ is
the \emph{dual lattice} of  $\tk_\kappa$. In general, the dual lattice
is given 
in the following way. 
If $L = A(\Z^n)\subset\R^n$ is a lattice,
where $A\in GL(\R^n)$, then  its dual 
is given by $L^*=(A^{-1})^T(\Z^n)$.
In our case the columns of $A$ are the vectors $b_1^\kappa,\dots,b_n^\kappa$.

For a vector $\alpha\in \R^n$ and a lattice $L \subset \R^n$ we define
\[
\alpha L = \{(\alpha_{1} l_{1},\dots, \alpha_{n}l_{n}):\,(l_{1},\dots,l_{n})\in L\}.
\]
It is easy to see, that if $a\in\tk_\kappa$ and $\eps\in\Lambda_\kappa$,
then $\eps a\in\tk_\kappa$, and as $\Lambda_\kappa$ is a group, we have in fact $\eps\tk_\kappa=\tk_\kappa$. 
If $E$ is the diagonal matrix with the coordinates
of $\eps$ in its diagonal, then  
\begin{align}\label{dual_invar}
\eps\tk_\kappa^*&=E(A^{-1})^T(\Z^n)=E^T(A^{-1})^T(\Z^n)\\
                &=(A^{-1}E)^T(\Z^n)
                =((E^{-1}A)^{-1})^T(\Z^n)=(\eps^{-1}\tk_\kappa)^*=\tk_\kappa^*.\nonumber
\end{align}

Since the Laplace operator commutes with the action of
$\psl$, $u(\sigma_\kappa z)$ is still an eigenfunction of the Laplacians, 
and its
 Fourier coefficients can be expressed by means of
its eigenvalues and the modified Bessel function of the second kind, denoted by $K_\nu(z)$ 
(see Theorem 5.1 in \cite{truelsen}):
\begin{thm}\label{fourier_expansion}
Let $u$ be an automorphic form 
that satisfies
the growth condition
$u(\sigma_\kappa  z)=o(e^{2\pi y_k})$
as $y_k\to \infty$ ($k=1,\dots,n$) (where $\kappa$ is a cusp for $\Gamma$). Then
$u(\sigma_\kappa z)$ admits a Fourier expansion of the form
\begin{equation}\label{fourier_expansion_of_u}
u(\sigma_\kappa z) = \sum_{l\in \tk_\kappa^*}a_\kappa(l,y)e^{2\pi i<l,x>}, 
\end{equation}
where
\[
a_\kappa(l,y) = c_\kappa(l)\sqrt{y_1\dots y_n}K_{s_1-1/2}(2\pi\abs{l_{1}}y_1)\dots K_{s_n-1/2}(2\pi\abs{l_{n}}y_n)
\]
for $l\neq0$, while $a_\kappa(0,y)=:a_\kappa(y)$ is 
the linear combination of two terms of the form 
$y_1^{s_1}\dots y_n^{s_n}$
and  $y_1^{1-s_1}\dots y_n^{1-s_n}$, where the numbers $s_k\in\C$ are such that 
$(\Delta_k +s_k(1-s_k))u=0$.
\end{thm}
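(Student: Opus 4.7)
The plan is to follow the classical derivation from the $n=1$ case in four stages: extract a Fourier series from translation invariance, convert the Laplacian eigenequations into separable ODEs, solve them via modified Bessel and Euler equations, and finally use the growth hypothesis to cut down to the claimed form.

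Since $\kappa$ is a cusp, the group $\sigma_\kappa^{-1}\Gamma_\kappa\sigma_\kappa$ contains all translations $z\mapsto z+\alpha$ with $\alpha\in\tk_\kappa$, so $f(z):=u(\sigma_\kappa z)$ is invariant under $x\mapsto x+\alpha$ for such $\alpha$. Fourier analysis on the torus $\R^n/\tk_\kappa$ then produces the expansion (\ref{fourier_expansion_of_u}) indexed by the dual lattice $\tk_\kappa^*$, with coefficients
\[
a_\kappa(l,y)=\frac{1}{\mathrm{vol}(P_\kappa)}\int_{P_\kappa}f(x+iy)e^{-2\pi i\left<l,x\right>}\,dx,
\]
which are smooth in $y$. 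Because each $\Delta_k$ commutes with the $\psl$-action, $f$ remains an eigenfunction and satisfies $(\Delta_k+s_k(1-s_k))f=0$ for every $k$; substituting the Fourier series and matching coefficients yields, for each $l\in\tk_\kappa^*$ and each $k$, the ODE
\[
y_k^2\,\partial_{y_k}^2 a_\kappa(l,y)+\bigl(s_k(1-s_k)-4\pi^2 l_k^2 y_k^2\bigr)a_\kappa(l,y)=0.
\]

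Since each equation involves only $\partial_{y_k}$, the system in $y_1,\ldots,y_n$ is separable and we may write $a_\kappa(l,y)=\prod_{k=1}^n f_k(y_k)$. When $l_k\neq 0$, the substitution $f_k(y_k)=\sqrt{y_k}\,g_k(2\pi|l_k|y_k)$ turns the $k$-th ODE into the modified Bessel equation of order $s_k-1/2$, whose solution space is spanned by $K_{s_k-1/2}$ and $I_{s_k-1/2}$; when $l=0$, the equation is the Euler equation $y_k^2 f_k''+s_k(1-s_k)f_k=0$ with solutions $y_k^{s_k}$ and $y_k^{1-s_k}$. The hypothesis $u(\sigma_\kappa z)=o(e^{2\pi y_k})$ transfers by the integral formula above to the same bound on each $a_\kappa(l,y)$. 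Since $I_{s_k-1/2}(t)\sim e^t/\sqrt{2\pi t}$ grows exponentially while $K_{s_k-1/2}(t)$ decays exponentially, the coefficient of any $I$-Bessel factor must vanish for $l\neq 0$, leaving exactly the product of $K$-factors claimed; for $l=0$ both Euler solutions have polynomial growth and survive.

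The most delicate point is this last elimination: the bound $o(e^{2\pi y_k})$ immediately kills the $I$-piece when all $|l_k|\geq 1$, but a generic Fourier index $l\in\tk_\kappa^*$ may have some coordinate $|l_k|<1$. To treat these one exploits the $\Lambda_\kappa$-equivariance $\eps\tk_\kappa^*=\tk_\kappa^*$ recorded in (\ref{dual_invar}): the automorphy of $u$ under a hyperbolic element $\tau\colon z\mapsto \eps^2 z+b$ with $\eps\in\Lambda_\kappa$ produces a functional equation linking $a_\kappa(l,y)$ to $a_\kappa(\eps^{-2}l,\eps^2 y)$ up to a phase, and since $\log\Lambda_\kappa$ has full rank $n-1$ in the hyperplane $V$ the orbit $\Lambda_\kappa\cdot l$ contains elements with at least one coordinate arbitrarily large in modulus. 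Transferring the growth estimate along the orbit then forces the $I$-Bessel coefficient to vanish for the original $l$ as well, and this is the only step where irreducibility of $\Gamma$ enters the argument in an essential way.
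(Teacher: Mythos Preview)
The paper does not give its own proof of this theorem; it is simply cited as Theorem~5.1 of \cite{truelsen}. Your argument follows the classical line and is largely sound, but it has one sloppy step and one genuine gap.

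\textbf{Separability is misstated.} From the $n$ commuting second-order ODEs you cannot conclude that $a_\kappa(l,y)$ is a \emph{single} product $\prod_k f_k(y_k)$; you only get that it lies in the tensor product of the one-variable solution spaces, hence is a linear combination of up to $2^n$ pure products. For $l\neq 0$ this is harmless: once you know (via the growth bound transported along the $\Lambda_\kappa$-orbit, exactly as you outline) that in every coordinate the $I$-Bessel part vanishes, the $2^n$ products collapse to the single $K$-product. Note also that you are silently using that every $l_k\neq 0$ when $l\neq 0$; this is true because $\tk_\kappa^*$ is $\Lambda_\kappa$-invariant (cf.\ (\ref{dual_invar})) and hence falls under Proposition~\ref{zeta_prop}, but it deserves a word.

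\textbf{The zeroth coefficient.} Here the separability error actually bites. ``Both Euler solutions have polynomial growth and survive'' leaves you with $2^n$ monomials $\prod_k y_k^{t_k}$, $t_k\in\{s_k,1-s_k\}$, and the growth hypothesis alone cannot cut this down to the two pure monomials $y^{s}$ and $y^{1-s}$ claimed in the theorem. The missing ingredient is again the $\Lambda_\kappa$-invariance, but applied differently: $a_\kappa(0,\eps y)=a_\kappa(0,y)$ for every $\eps\in\Lambda_\kappa$ forces $\sum_k t_k\log\eps_k\in 2\pi i\Z$ for each surviving monomial, and since $\log\Lambda_\kappa$ has full rank $n-1$ in the trace-zero hyperplane this pins $\Re(t_1)=\cdots=\Re(t_n)$. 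That constraint (together with the relation it imposes on the $s_k$, cf.\ (\ref{1_3_eps_eq})--(\ref{s_k_def})) is what rules out the mixed monomials; you should carry this argument through rather than stopping at ``both solutions survive''.
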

In the following we always assume that an automorphic form $u$  satisfies the growth condition
in Theorem \ref{fourier_expansion} and hence admits the Fourier expansion (\ref{fourier_expansion_of_u}). 
Since $u(\sigma_\kappa z)$ remains unchanged if we substitute 
$z\mapsto \eps z$ for any $\eps\in\Lambda_\kappa$, comparing the Fourier coefficients, using
(\ref{dual_invar})
and also that $N\eps=1$ holds,  we obtain that
$c_\kappa({\eps l})=c_\kappa(l)$ for every cusp $\kappa$ and for every $\eps\in\Lambda_\kappa$, 
$l\in \tk_\kappa^*\setminus 0$. 
Also, well-known bounds for the Bessel function 
$K_\nu(z)$
and the absolute convergence of the 
sum in (\ref{fourier_expansion_of_u})
easily imply  the trivial bound 
$c_\kappa(l)\ll e^{\delta\abs{N(l)}^{1/n}}$ for any $\delta>0$,
 where the implied constant depends on
 $\delta$. 
From this the exponential 
decay of 
$u(\sigma_\kappa z)-a_\kappa(y)$
"near the cusp" can
be derived.
Though we will not
detail its 
(technical but straightforward)
proof,
the precise statement is
given in the following
\begin{prop}\label{u_bound_prop}
Let $u$ be an automorphic form
with respect to  $\Gamma$ 
with Laplace eigenvalues 
$s_k(1-s_k)$ that
satisfies 
$u(\sigma_\kappa z)
=o(e^{2\pi y_k})$ for any $1\leq k\leq n$. 
Assume that 
$\log\left(\frac{ y}{(Ny)^{1/n}}\right)$ is bounded,
then 
$u(\sigma_\kappa z)-a_0(y) = 
O(e^{-C N(y)^{1/n}})$ for some constant $C>0$ if 
$N(y)$ is big enough.   
The implied constant depends on 
the  bounds on $N(y)$ and 
$\log\left(\frac{ y}{(Ny)^{1/n}}\right)$. 
\end{prop}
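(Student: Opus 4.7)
My plan is to insert the Fourier expansion (\ref{fourier_expansion_of_u}) of Theorem \ref{fourier_expansion} and bound the tail. Since $|e^{2\pi i<l,x>}|=1$, by the triangle inequality it suffices to show
\[
\sum_{l\in\tk_\kappa^*\setminus\{0\}} |c_\kappa(l)|\prod_{k=1}^n \sqrt{y_k}\,\bigl|K_{s_k-1/2}(2\pi|l_k|y_k)\bigr|
\ll e^{-C N(y)^{1/n}}
\]
for some $C>0$ once $N(y)$ is large. Because $\Gamma$ is commensurable with a Hilbert modular group by Theorem \ref{1_3_commensurable_thm}, the lattice $\tk_\kappa^*$ is, up to a constant scaling, the image of a fractional ideal in a totally real field of degree $n$; consequently every nonzero $l\in\tk_\kappa^*$ has all coordinates nonzero and satisfies $|N(l)|\geq c_0>0$ for some absolute $c_0$.

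The decisive step is to exploit the invariance $c_\kappa(\eps l)=c_\kappa(l)$ together with $|N(\eps l)|=|N(l)|$ (which follows from $N\eps=1$) in order to replace each $l$ by a favourable orbit representative. Because $\log\Lambda_\kappa$ has rank $n-1$ inside the hyperplane $V$ of zero-sum vectors, and because $\log(y/(Ny)^{1/n})$ is bounded by hypothesis, for each nonzero $l$ I would choose $\eps\in\Lambda_\kappa$ so that the vector
\[
\bigl(\log|\eps_k l_k|y_k-\tfrac{1}{n}\log(|N(l)|N(y))\bigr)_{k=1,\ldots,n}\in V
\]
lies in the fundamental parallelotope $Q_\kappa$ of $\log\Lambda_\kappa$. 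The boundedness of $Q_\kappa$ and of $\log(y/(Ny)^{1/n})$ then yield constants $A_1,A_2>0$ such that
\[
A_1\bigl(|N(l)|N(y)\bigr)^{1/n}\leq|\eps_k l_k|y_k\leq A_2\bigl(|N(l)|N(y)\bigr)^{1/n}\quad (1\leq k\leq n).
\]

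With balancing in place, the standard asymptotic $K_\nu(x)\ll_\nu x^{-1/2}e^{-x}$ (uniform for $\nu$ in a compact set) gives
\[
\prod_{k=1}^n\sqrt{y_k}\,\bigl|K_{s_k-1/2}(2\pi|\eps_k l_k|y_k)\bigr|\ll e^{-2\pi n A_1(|N(l)|N(y))^{1/n}}
\]
as soon as $N(y)$ is large enough for every argument to exceed a fixed threshold (automatic because $|N(l)|\geq c_0$). Combining this with the trivial bound $c_\kappa(l)\ll_\delta e^{\delta|N(l)|^{1/n}}$ and choosing $\delta<\pi n A_1$, each term is at most $\exp\bigl((\delta-2\pi n A_1 N(y)^{1/n})|N(l)|^{1/n}\bigr)$. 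Since the number of $\Lambda_\kappa$-orbit representatives with $|N(l)|^{1/n}\leq T$ grows only polynomially in $T$, the resulting series is geometric and dominated by $O(e^{-C N(y)^{1/n}})$ with $C=(\pi n A_1-\delta)c_0^{1/n}$.

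The main obstacle I anticipate is formalising the balancing step uniformly in $l$ and $y$; this reduces to the standard fact that the covering radius of $\log\Lambda_\kappa$ in $V$ is finite, together with careful bookkeeping of how the hypothesis on $\log(y/(Ny)^{1/n})$ enters the constants $A_1, A_2$. A minor additional point is uniformity of the Bessel asymptotic in the parameters $s_k-\tfrac12$, which is routine because $u$ is assumed to have fixed Laplace eigenvalues.
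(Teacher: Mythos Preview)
Your ingredients are exactly those the paper indicates (just before the proposition): insert the Fourier expansion of Theorem~\ref{fourier_expansion}, use the trivial bound $c_\kappa(l)\ll_\delta e^{\delta|N(l)|^{1/n}}$, and control the Bessel factors by the standard estimate $K_\nu(x)\ll x^{-1/2}e^{-x}$. The paper does not spell out the details, so there is no alternative route to compare to; but your execution of the ``balancing step'' has a genuine gap.

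The Fourier series is a sum over \emph{all} nonzero $l\in\tk_\kappa^*$, and each $\Lambda_\kappa$-orbit is infinite. When you ``replace each $l$ by a favourable orbit representative'' you are bounding, for every $l$, the term attached to $\eps(l)l$ rather than the term attached to $l$ itself; the invariance $c_\kappa(\eps l)=c_\kappa(l)$ does not help here because the Bessel factors $\prod_k K_{s_k-1/2}(2\pi|l_k|y_k)$ are \emph{not} invariant under $l\mapsto\eps l$. Your final summation is then only over orbit representatives, so the infinite sum over each orbit is simply missing. If you keep the orbit decomposition you must still bound $\sum_{\eps\in\Lambda_\kappa}\prod_k|K_{s_k-1/2}(2\pi|\eps_k l_{0,k}|y_k)|$ for the balanced representative $l_0$, which is an extra (though doable) step.

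The simpler route, and presumably what the paper has in mind by ``technical but straightforward'', avoids balancing altogether. The hypothesis that $\log\bigl(y/(Ny)^{1/n}\bigr)$ is bounded gives directly $y_k\geq c\,(Ny)^{1/n}$ for every $k$, hence
\[
\prod_{k=1}^n \sqrt{y_k}\,|K_{s_k-1/2}(2\pi|l_k|y_k)|\ \ll\ |N(l)|^{-1/2}\,\exp\Bigl(-2\pi c\,(Ny)^{1/n}\sum_{k=1}^n|l_k|\Bigr),
\]
where $K_\nu(x)\ll x^{-1/2}e^{-x}$ holds for all $x>0$ since $|\mathrm{Re}(s_k-\tfrac12)|<\tfrac12$. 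Combined with $|N(l)|^{1/n}\leq n^{-1}\sum_k|l_k|$ and the trivial coefficient bound, each term is $\ll\exp\bigl(-c'(Ny)^{1/n}\sum_k|l_k|\bigr)$ once $N(y)$ is large. Summing this over the full lattice $\tk_\kappa^*\setminus\{0\}$ is now a routine lattice sum bounded by $O(e^{-C(Ny)^{1/n}})$; no orbit bookkeeping is needed.
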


Recall that above we fixed 
the generators $\eps_1^\kappa,\dots,\eps_{n-1}^\kappa$ of $\Lambda_\kappa$.
Their coordinates will be denoted by $(\eps_j^\kappa)^{(k)}$ ($k=1,\dots,n$).
If the zeroth Fourier coefficient of $u(\sigma_\kappa z)$ is non-zero,
then the comparison of them on both sides of the equation
$u(\sigma_\kappa z)=u(\sigma_\kappa (\eps_j^\kappa z))$ gives for each $1\leq j\leq n-1$ (similarly as in section II.1 of 
\cite{efrat}) that
\begin{align}\label{1_3_eps_eq}
\prod_{k=1}^n\left[(\eps_j^\kappa)^{(k)}\right]^{s_k}=1.
\end{align}
Let us define $s:=(s_1+\dots+s_n)/n$,
then by (\ref{1_3_eps_eq}) we have
\begin{equation}\label{eps_s_eqs}
(s_1,\dots,s_n)
\left[
\begin{array}{cccc}
1& \log(\eps_1^\kappa)^{(1)}&\dots &\log (\eps_{n-1}^\kappa)^{(1)} \\
1& \log(\eps_1^\kappa)^{(2)}&\dots &\log (\eps_{n-1}^\kappa)^{(2)} \\
\vdots &\vdots & \ddots &\vdots \\
1& \log(\eps_1^\kappa)^{(n)}&\dots &\log (\eps_{n-1}^\kappa)^{(n)} 
\end{array}
\right]
=(ns,2\pi im_{u,\kappa}^{(1)},\dots,2\pi i m_{u,\kappa}^{(n-1)})
\end{equation}
for some $m_{u,\kappa}=(m_{u,\kappa}^{(1)},\dots, m_{u,\kappa}^{(n-1)})^T\in \Z^{n-1}$. Let us denote the matrix above by 
$\mathcal{E}_\kappa$.
Since the vectors $\log \eps_i^\kappa$ form a basis in the trace $0$ subspace of $\R^n$ and the first
column of $\mathcal{E}_\kappa$ is not in that subspace, we get that $\mathcal{E}_\kappa$ is invertible.
Its inverse is of the form
\[
\mathcal{E}_\kappa^{-1}=
\left[
\begin{array}{cccc}
1/n & 1/n &\dots & 1/n\\
(e_1^\kappa)^{(1)} & (e_1^\kappa)^{(2)} & \dots & (e_1^\kappa)^{(n)} \\
\vdots & \vdots & \ddots & \vdots\\
(e_{n-1}^\kappa)^{(1)} & (e_{n-1}^\kappa)^{(2)} & \dots & (e_{n-1}^\kappa)^{(n)} 
\end{array}
\right],
\]
and
the values $s_1,\dots,s_n$ are determined by $s$ and $m_{u,\kappa}$ through
\begin{align*}
(s_1,\dots,s_n)=(ns,2\pi i m_{u,\kappa}^{(1)},\dots,2\pi i m_{u,\kappa}^{(n-1)})
\mathcal{E}_\kappa^{-1}.
\end{align*}
That is, 
\begin{equation}\label{s_k_def}
s_k=s+\sum_{j=1}^{n-1}2\pi i m_{u,\kappa}^{(j)}(e_j^\kappa)^{(k)},\qquad
y_k^{s_k}=y_k^s\exp\left(\sum_{j=1}^{n-1}2\pi i m_{u,\kappa}^{(j)}(e_j^\kappa)^{(k)}\log y_k \right).
\end{equation}
For a cusp $\kappa$ and for any $m\in \Z^{n-1}$ set 
\begin{align}\label{1_3_grossen_exp_sum}
\lambda_m^\kappa(y)
=\exp\left(\sum_{k=1}^n\sum_{j=1}^{n-1}2\pi i m_j(e_j^\kappa)^{(k)}\log y_k \right)
=\prod_{k=1}^n\prod_{j=1}^{n-1}y_k^{2\pi i m_j(e_j^\kappa)^{(k)}}
\end{align}
for every $y\in(\R^+)^n$.
With this notation  we may write the zeroth coefficient of $u(\sigma_\kappa z)$ in the following way:
\begin{align*}
a_\kappa(y)=
\eta_\kappa (y_1\dots y_n)^s\lambda_{m_{u,\kappa}}^\kappa(y)+
\phi_\kappa(y_1\dots y_n)^{1-s}\lambda_{-m_{u,\kappa}}^\kappa(y).
\end{align*}
Later we will see that $m_{u,\kappa}$ can be assumed to be the same
vector $m_u$ for every cusp, at least if it can be defined.
If however $\eta_\kappa=\phi_\kappa=0$ holds for all $\kappa\in\mathcal{S}$
(i.e. $u$ is a cusp form), 
then we simply set $m_{u}=0$.
Aside from the next paragraph, 
in the following we always assume that $0< \ree s<1$ holds
whenever the number $s$ is associated with the form $u$.

Later we will make use of a specific family automorphic forms, namely
the Eisenstein series that are defined as follows. 
Let $\kappa\in\mathcal{S}$ be a cusp. For an $s\in\C$ and $m\in\Z^{n-1}$ 
the Eisenstein series belonging to $\kappa$ is given by
\begin{equation}\label{Eisenstein_def}
E_\kappa(z,s,m)=
\sum_{\gamma\in\Gamma_\kappa\setminus \Gamma}
y(\gamma^{(1)}z_1)^{s_1}\dots y(\gamma^{(n)}z_n)^{s_n}
\end{equation}
for any $z\in\HH^n$, where the exponents $s_1,\dots,s_n$ are defined as in (\ref{s_k_def}). 
This series
converges absolutely and uniformly on compact subsets for $\ree s>1$.
Also, $E_\kappa(z,s,m)$ (as a function in the variable $z$)
is clearly a $\Gamma$-invariant
eigenfunction of the Laplacians and 
(as a function of $s$) it can be continued 
meromorphically  to the whole complex plane.
Moreover, for a cusp $\kappa'\in\mathcal{S}$ 
the coefficient $\eta_{\kappa'}$ in the  Fourier coefficient $a_{\kappa'}(y)$
is $1$ if $\kappa'=\kappa$ and $0$ otherwise. For the details see chapter II and 
also section III.4 of \cite{efrat}.

\section{The geometric trace}

\subsection{The automorphic kernel}
In the following we 
 fix  a compactly supported smooth function 
$\psi\in C^\infty_0(\R^n)$ and define the \emph{point pair invariant kernel}
\[
k_\psi(z,w)=
k(z,w)=\psi\left(\frac{\abs{z_1-w_1}^2}{\im z_1\cdot \im w_1},
\dots,\frac{\abs{z_n-w_n}^2}{\im z_n\cdot \im w_n}\right)
=\psi\left(\frac{\abs{z-w}^2}{\im z\cdot \im w}\right)
\]
for every $z,w\in\HH^n$. 
Invariance means that 
$k(z,w)=k(\sigma z,\sigma w)$ holds for every $z,w\in\HH^n$ and $\sigma\in \psl$.
To avoid long formulae we often use
the latter compact notation for $\psi$ and its transforms defined below.
In these cases the operations on vectors always indicate coordinate-wise operations.
The \emph{automorphic kernel} $K(z,w)$ is given by the sum
\begin{equation}\label{kernel_K_def}
K(z,w) = \sum_{\gamma\in\Gamma}k(z,\gamma w)
\end{equation}
that clearly defines an automorphic function w.r.t. $\Gamma$.

The following
transformations of $\psi$ 
 often occur in computations:
\begin{align}\label{2_1_Q_g_h_def}
Q(w)=Q(w_1,\dots,w_n) &
:= \int_{w_n}^\infty\dots\int_{w_1}^\infty
\frac{\psi(t_1,\dots,t_n)}{\sqrt{t_1-w_1}\dots\sqrt{t_n-w_n}}
\,dt_1\dots dt_n,\nonumber
\\[3mm]
g(u)=g(u_1,\dots,u_n)&
:=Q(e^{u_1}+e^{-u_1}-2,\dots,e^{u_n}+e^{-u_n}-2),
\\[3mm]
h(r)=h(r_1,\dots, r_n)&
:=
\int_{-\infty}^{\infty}
\dots
\int_{-\infty}^{\infty}
g(u_1,\dots, u_n)e^{i\sum_{k=1}^nr_ku_k}
\,du_1\dots du_n.\nonumber
\end{align}
Note that this is the multidimensional version of the Harish-Chandra transform. 
Since $\psi$ is a compactly supported smooth function,
$g$ is also a smooth function with compact support and hence $h$ is rapidly decreasing.

The inverses of the transforms above are 
\begingroup
\allowdisplaybreaks
\begin{align}\label{2_1_transform_rules}
g(u_1,\dots,u_n)&
=
\frac{1}{(2\pi)^n}
\int_{-\infty}^{\infty}
\dots
\int_{-\infty}^{\infty}
h(r_1,\dots,r_n)e^{-i\sum_{k=1}^nr_ku_k}
\,dr_1\dots dr_n,
\nonumber
\\[9mm]
Q(w_1,\dots,w_n)&
=g\left(2\log\left(\sqrt{\frac{w_1}{4}+1}+\sqrt{\frac{w_1}{4}}\right),\dots,
        2\log\left(\sqrt{\frac{w_n}{4}+1}+\sqrt{\frac{w_n}{4}}\right)\right),
\\[6mm]
\psi(t_1,\dots, t_n)
 &
= \frac{(-1)^n}{\pi^n}\int_{t_n}^\infty\dots\int_{t_1}^\infty
\frac{\frac{\partial^n Q}{\partial w_1\dots\partial w_n}(w_1,\dots,w_n)}{\sqrt{w_1-t_1}\dots\sqrt{w_n-t_n}}
\,dw_1\dots dw_n\nonumber
\end{align}
\endgroup
(see Proposition I.2.2 in \cite{efrat}).

\subsection{The geometric trace}
Now we turn to the multidimensional version of the generalized Selberg trace formula,
more precisely, to the geometric trace that is computed by collecting the terms
of the conjugacy classes in the sum (\ref{kernel_K_def}). As in \cite{biro1},
our starting point is the integral
\[
\Tr_u K = \int_F K(z,z)u(z)\,\mu(z),
\]
where $u$ is a fixed automorphic form that satisfies 
the growth condition $u(z)=o(e^{2\pi y_k})$ for $k=1,\dots,n$,
$F$ is the fundamental domain of $\Gamma$, and 
$\mu$ is the product measure on $\HH^n$ obtained from the measure
$y^{-2}\,dx\,dy$ on $\HH$. 
Note that $\ree s_k<1$ is assumed for each $k$ (excluding the case
$u(z)=1$, that would yield the trace formula given in \cite{efrat}).
Since this integral is not necessarily convergent, we work with the \emph{truncated trace}
defined by
\begin{equation}\label{trace_def}
\Tr_u^AK := \int_{F_A} K(z,z)u(z)\,d\mu(z)
\end{equation}
for every $A>0$, where 
\[
F_A=F_0\cup\left(\bigcup_{\kappa\in\mathcal{S}}F_\kappa^A\right)
\]
with 
$F_\kappa^A=\{z\in F_\kappa:\,Y_0^\kappa(z)\leq A\}$.

Substituting the definition of $K(z,w)$ into (\ref{trace_def}) and summing 
over the conjugacy
classes in $\Gamma$ we get
\[
\Tr_u^A K= \sum_{\{\gamma\}}\sum_{\sigma\in\{\gamma\}}\int_{F_A}k(z,\sigma z)u(z)\,d\mu(z),
\]
where $\{\gamma\}$ denotes the conjugacy class of an element $\gamma\in\Gamma$.
Note that the conjugacy class of the identity element consists only of itself,
and the term that belongs to it is a constant multiple of the integral
\[
\int_{F_A}u(z)\,d\mu(z).
\]
This integral converges as $A\to \infty$ and the limit is zero since
the Laplacians are symmetric operators and the eigenvalues of $1$ and $u$ are different.

Our aim is to give the contribution of the different types of classes in this trace.
The main result can be summarized in the form
\[
\Tr_u^A K=\Sigma_{\mathrm{ell}}+\Sigma_{\mathrm{mix}}+\Sigma_{\mathrm{par}}
+\Sigma_{\mathrm{hyp-par}},
\]
where the four terms on the right hand side stand for the contribution of
totally elliptic, mixed (and totally hyperbolic), totally parabolic and hyperbolic-parabolic classes,
respectively. Note that the totally hyperbolic classes can be handled 
in the same way as the mixed classes, hence they are melted in a single term above.
Since the individual terms are given by lengthy and complicated formulae, we 
do not give the whole sum in one statement, but split 
the main result into four theorems below instead. We begin with
the contribution of elliptic, mixed and totally hyperbolic classes,
here the corresponding
results are similar to the ones in \cite{biro1}.
Our main focus is therefore on the 
parabolic and hyperbolic-parabolic classes, that are handled afterwards.

\subsection{Contribution of totally  elliptic, mixed and totally hyperbolic classes}\label{ell_mix_sec}

In these cases the sum 
\[
 \sum_{\sigma\in\{\gamma\}}\int_{F_A}k(z,\sigma z)u(z)\,d\mu(z)
\]
in $\Tr_u^A K$ belonging to the class $\{\gamma\}$ actually converges as $A\to\infty$ (one can see this by analysing the detailed
computations in the proofs of the next two theorems). We will also see that there are only finitely many
classes for which the sum above is non-zero, hence we can integrate over $F$ instead of
$F_A$ (by including an $o(A)$ term as well).

Since $\sigma_1^{-1}\gamma\sigma_1 = \sigma_2^{-1}\gamma\sigma_2$ holds if and only if $\sigma_2\sigma_1^{-1}$ is in the centralizer $C(\gamma)$ of $\gamma$, and
this is equivalent to $\sigma_2\in C(\gamma)\sigma_1$, we get that
\begin{equation}\label{tgamma_def}
T_\gamma:=\sum_{\sigma\in\{\gamma\}}\int_{F}k(z,\sigma z)u(z)\,d\mu(z)
=
\sum_{\sigma\in C(\gamma) \setminus\Gamma}\int_{F}
k(z,\sigma^{-1}\gamma\sigma z)u(z)\,d\mu(z).
\end{equation}
As $k(\varrho z,\varrho w) = k(z,w)$ holds for every $\varrho\in \psl$ and $u$ is invariant under the
action of $\Gamma$, this last sum is
\[
\sum_{\sigma\in C(\gamma)\setminus \Gamma}
\int_{F}
k(\sigma z,\gamma\sigma z)u(\sigma z)\,d\mu(z) 
=
\int_{C(\gamma)\setminus \HH^n}
k(z,\gamma z)u(z)\,d\mu(z),
\]
and for every $\varrho\in \psl$  this can be written as
\begin{equation}\label{tgamma_final_form}
\int_{\varrho^{-1}(C(\gamma)\setminus \HH^n)}
k(\varrho z,\gamma \varrho z)u(\varrho z)\,d\mu(z)=
\int_{(\varrho^{-1}C(\gamma)\varrho)\setminus \HH^n}
k(z, \varrho^{-1}\gamma \varrho z)u(\varrho z)\,d\mu(z)
\end{equation}
since the measure $\mu$ and the function $k$ are $\psl$ invariant.
Note that 
$(\varrho^{-1}C(\gamma)\varrho)\setminus \HH^n$
is nothing else but the fundamental domain of the group $\varrho^{-1}C(\gamma)\varrho$.

Now we turn to the contribution of totally elliptic classes.
Let us first note that
by Corollary $2.14_1$ in \cite{freitag} there 
are only finitely many such classes, hence
$\Sigma_\mathrm{ell}$ is a sum of finitely many terms of the form (\ref{tgamma_final_form}).

Before giving the value $\Sigma_\mathrm{ell}$ let us fix the following notations.
Every elliptic element $\gamma\in\Gamma$ is conjugate in $PSL(2,\R)^n$ to an element
of the form
$(R(\theta_\gamma^{(1)}),\dots,R(\theta_\gamma^{(n)}))$, where
\begin{equation}\label{elll_R_def}
R(\alpha)=\mtx{\cos\alpha}{\sin\alpha}{-\sin\alpha}{\cos\alpha}
\end{equation}
and the vector $(\theta_\gamma^{(1)},\dots,\theta_\gamma^{(n)})\in[0,\pi)^n$ depends only on the 
conjugacy class of $\gamma$.

Besides, let $g_{\lambda}(r):[0,\infty)\to\C$ be the unique solution of the differential equation
\begin{equation}\label{elliptic_diff_eq}
g''(r)+\frac{\cosh r}{\sinh r}g'(r)=\lambda g(r)
\end{equation}
satisfying the initial condition $g(0)=1$.

\begin{thm}\label{elliptic_main_thm}
The contribution
of the totally elliptic classes in the truncated trace, i.e., the value
of $\Sigma_{\mathrm{ell}}$ is
\[
\sum_{\{\gamma\}\textrm{ t.ell.}
}
\frac{(2\pi)^n}{m_\gamma}u(z_\gamma)
\int\limits_0^\infty\dots\int\limits_0^\infty
\psi(S(r_1,\theta_\gamma^{(1)}),\dots,S(r_n,\theta_\gamma^{(n)}))
\left(\prod_{k=1}^ng_{\lambda_k}(r_k)\sinh r_k\,dr_k\right)+o(A),
\]
where the sum runs over all totally elliptic classes and for every class $\{\gamma\}$
the point $z_\gamma\in\HH^n$ is the fixed point of $\gamma$, $m_\gamma\in\N^+$ is
the order of the centralizer of $\gamma$, $S(r,\theta)=(2\sinh r\sin\theta)^2$
for any $r,\tht\in\R$, 
$(\lambda_1,\dots,\lambda_n)$ is the Laplacian eigenvalue vector of $u$, 
and the functions $g_{\lambda_k}$ and the vector $(\theta_\gamma^{(1)},\dots,\theta_\gamma^{(n)})$
are defined above the theorem. Moreover, 
there are only finitely many totally elliptic conjugacy classes, hence the sum above is finite.
\end{thm}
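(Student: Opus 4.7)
Starting from formula (\ref{tgamma_final_form}), for each totally elliptic class $\{\gamma\}$ I would pick $\varrho\in\psl$ so that $\varrho^{-1}\gamma\varrho=(R(\theta_\gamma^{(1)}),\dots,R(\theta_\gamma^{(n)}))$; this element fixes the point $\mathbf{i}=(i,\dots,i)\in\HH^n$, so the fixed point of $\gamma$ itself is $z_\gamma=\varrho\mathbf{i}$. Because $\gamma$ is totally elliptic, $C(\gamma)\leq\Gamma$ is contained in the compact stabilizer of $z_\gamma$ and is therefore finite of order $m_\gamma$. Hence the conjugated group $\varrho^{-1}C(\gamma)\varrho$ is a finite group acting on $\HH^n$, and the ``fundamental domain'' integral in (\ref{tgamma_final_form}) is just $\frac{1}{m_\gamma}$ times the integral over all of $\HH^n$. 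Finiteness of the number of classes appearing in $\Sigma_{\mathrm{ell}}$ follows from Corollary $2.14_1$ of \cite{freitag}, which also justifies (as indicated in the text) replacing $F_A$ by $F$ at the cost of an $o(A)$ (in fact $o(1)$) term, since each of the finitely many $T_\gamma$-integrals converges.

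Next I would introduce geodesic polar coordinates $(r_k,\phi_k)$ around $i$ in each factor $\HH$, so that $d\mu(z_k)=\sinh r_k\,dr_k\,d\phi_k$. The pair-invariant kernel becomes
\[
k(z,\varrho^{-1}\gamma\varrho z)=\psi\!\left(S(r_1,\theta_\gamma^{(1)}),\dots,S(r_n,\theta_\gamma^{(n)})\right),
\]
using the classical identity $\frac{|w-R(\theta)w|^2}{\im w\cdot \im R(\theta)w}=(2\sinh\rho(w,i)\sin\theta)^2=S(r,\theta)$ applied coordinate-wise. The only remaining dependence on the angular variables sits in $u(\varrho z)$, so everything hinges on computing the angular average
\[
U(r_1,\dots,r_n):=\int_0^{2\pi}\!\!\cdots\!\int_0^{2\pi}u(\varrho z)\,d\phi_1\cdots d\phi_n.
\]

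The main point is that $u$ is a joint eigenfunction of the Laplacians $\Delta_k$ with eigenvalues $\lambda_k=s_k(1-s_k)$, and each $\Delta_k$ commutes with rotations about the $k$-th copy of $i$, acting in polar coordinates via the radial operator $\partial_{r_k}^2+\coth(r_k)\partial_{r_k}$ plus an angular part that vanishes on rotation-averaged functions. Applying this observation to one variable at a time, I would conclude that $U$ is smooth, satisfies $U(0,\dots,0)=(2\pi)^n u(z_\gamma)$, and solves
\[
\bigl(\partial_{r_k}^2+\coth(r_k)\partial_{r_k}+\lambda_k\bigr)U=0,\qquad k=1,\dots,n.
\]
By separation of variables and the defining ODE (\ref{elliptic_diff_eq}) together with the initial condition $g_{\lambda_k}(0)=1$, this forces
\[
U(r_1,\dots,r_n)=(2\pi)^n u(z_\gamma)\prod_{k=1}^n g_{\lambda_k}(r_k).
\]

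Plugging this back yields
\[
T_\gamma=\frac{(2\pi)^n}{m_\gamma}u(z_\gamma)\int_0^\infty\!\!\cdots\!\int_0^\infty
\psi(S(r_1,\theta_\gamma^{(1)}),\dots,S(r_n,\theta_\gamma^{(n)}))\prod_{k=1}^n g_{\lambda_k}(r_k)\sinh r_k\,dr_k,
\]
and summing over the finitely many totally elliptic classes gives the stated formula with an $o(A)$ remainder. The step I expect to be the most delicate is the rigorous reduction of $U$ to the product of the $g_{\lambda_k}(r_k)$: one has to check that the iterated angular average really does kill the non-radial part of each $\Delta_k$, and that the radial eigenfunction equations decouple cleanly even though $u(\varrho z)$ is not a priori a product of one-variable functions. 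Once this is established, everything else is a change of variables and bookkeeping.
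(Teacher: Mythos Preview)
Your proposal is correct and follows essentially the same route as the paper's proof: conjugate to rotations about $\mathbf{i}$, replace the fundamental domain of the finite cyclic centralizer by $\tfrac{1}{m_\gamma}\int_{\HH^n}$, pass to geodesic polar coordinates, and use that the angular average of $u(\varrho z)$ satisfies the radial eigenvalue ODE in each variable with a unique solution bounded at the origin. The only organizational difference is that the paper performs the angular averaging one coordinate at a time (an explicit induction, citing Lemmas~1.10 and~1.12 of \cite{iwaniec} for the one-variable uniqueness), whereas you average in all angles simultaneously and then peel off the factors $g_{\lambda_k}(r_k)$; your ``delicate'' step is exactly what the paper justifies via Iwaniec's lemma that the smooth radial eigenfunction is unique up to scalar.
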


Next we handle the mixed and totally hyperbolic classes, i.e. the classes whose 
elements have at least one hyperbolic coordinates. For simplicity, we assume that 
the first $1\leq m \leq n$ coordinates of the element are hyperbolic, while
the following $n-m$ coordinates are elliptic. The results below can easily 
be reformulated and proved for other distributions of coordinates of
different types. Note also that the case $m=n$ is
the totally hyperbolic case, 
at least if the fixed points
of the element are not cusps, which is assumed in this section. 

Any mixed or totally hyperbolic element  $\gamma$ 
is conjugate in $PSL(2,\R)^n$ to an element of the form
\begin{equation}\label{mixed_conj}
\nu=(D(N_\gamma^{(1)}),\dots,D(N_\gamma^{(m)}),R(\theta_\gamma^{(m+1)}),\dots,R(\theta_\gamma^{(n)}))
\end{equation}
for some $N_\gamma^{(k)}>1$ and $\theta_\gamma^{(l)}\in[0,\pi)$, where
\begin{equation}\label{mix_D_def}
D(N)=\mtx{N^{1/2}}{0}{0}{N^{-1/2}}
\end{equation}
and $R(\theta)$ is defined in (\ref{elll_R_def}) above.
It is not hard to see that all these numbers
are determined uniquely by the class of $\gamma$ (and hence the notations $N_\gamma^{(k)}$ and
$\theta_\gamma^{(l)}$ are justified). 
The number $N_\gamma^{(k)}$ is called the  \emph{norm}
of $\gamma^{(k)}$.
We also set
\begin{equation}\label{mixed_N_def}
 N(\vartheta,\gamma^{(k)}):= \frac{N_\gamma^{(k)}+(N_\gamma^{(k)})^{-1}-2}{\cos^2\tht}
\end{equation}
for any $\vartheta\in (-\frac{\pi}{2};\frac{\pi}{2})$ and $1\leq k\leq m$.

Let $\varrho_\gamma\in \psl$ be an element  such that
$\nu =\varrho_\gamma^{-1}\gamma\varrho_\gamma$ holds.
To give the contribution of the class $\{\gamma\}$ 
we need to describe that centralizer $C(\nu)$ 
of $\nu$ in $\varrho_\gamma^{-1}\Gamma\varrho_\gamma$.
By 
the results of section I.5 in
\cite{efrat} 
the centralizer $C(\gamma)$  of $\gamma$ is a free abelian group of rank $m$.
We fix a set of its generators denoted by $\gamma_1,\dots,\gamma_m$,
then the centralizer $C(\nu)\leq\varrho_\gamma^{-1}\Gamma\varrho_\gamma$ is
$\varrho_\gamma^{-1}C(\gamma)\varrho_\gamma$ 
and it is generated by the elements $\nu_i =\varrho_\gamma^{-1}\gamma_i\varrho_\gamma$ 
for $i=1,\dots,m$. 
As the $\gamma_i$'s 
have the same fixed points as $\gamma$ this is true also for the conjugates and therefore
\[
\nu_i =\left(
    D(N_{\gamma_i}^{(1)}),\dots,D(N_{\gamma_i}^{(m)}),
    R(\theta_{\gamma_i}^{(m+1)}),\dots,R(\theta_{\gamma_i}^{(n)})
\right).
\]
 This is a somewhat imprecise notation since 
$N_{\gamma_i}^{(k)}>1$ may not be assured for all $k$. This means that $N_{\gamma_i}^{(k)}$ is
not necessarily the norm of $\nu_i^{(k)}$ in the above sense, but it is
still determined by the (fixed) generator $\nu_i$
and we keep using this notation.
The action of the first $m$ coordinates of the elements 
$\nu_i$ is simple:
for every $z=(z_1,\dots,z_n)\in\HH^n$  and $k=1,\dots,m$ we have
$|\nu_i^{(k)}z_k|=N_{\gamma_i}^{(k)}\abs{z_k}$ and $\arg \nu_i^{(k)} z_k = \arg z_k$.
The next statement follows now easily:
\begin{prop}\label{totally_hyp_fundom_prop}
The fundamental domain 
of the centralizer $C(\nu)=\varrho_\gamma^{-1}C(\gamma)\varrho_\gamma$ is 
\[
F_{C(\nu)}=\{z\in\HH^n:\,(\log\abs{z_1},\dots,\log\abs{z_m})\in P_\gamma\}
\]
where
$P_\gamma$ is the fundamental parallelepiped of the lattice
in $\R^m$ generated by the vectors
\[
(\log N_{\gamma_i}^{(1)},\dots,\log N_{\gamma_i}^{(m)})\qquad(i=1,\dots,m).
\]
\end{prop}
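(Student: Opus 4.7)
The plan is to reduce the action of $C(\nu)$ on $\HH^n$ to a translation action on $(\log|z_1|,\dots,\log|z_m|)\in\R^m$ and then read off the fundamental domain from that of the resulting lattice. Using the displayed relations immediately preceding the proposition, for each generator $\nu_i$ and each $z\in\HH^n$ one has $|\nu_i^{(k)}z_k|=N_{\gamma_i}^{(k)}|z_k|$ and $\arg \nu_i^{(k)}z_k=\arg z_k$ for $k\leq m$, while $\nu_i^{(k)}z_k=R(\theta_{\gamma_i}^{(k)})z_k$ for $k>m$. Hence $\nu_1^{a_1}\cdots\nu_m^{a_m}$ translates $(\log|z_1|,\dots,\log|z_m|)$ by $\sum_i a_i v_i$, where $v_i:=(\log N_{\gamma_i}^{(1)},\dots,\log N_{\gamma_i}^{(m)})$, and rotates the remaining coordinates.

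The central technical step is to verify that $v_1,\dots,v_m$ are $\R$-linearly independent, so that their $\Z$-span is a rank-$m$ lattice in $\R^m$ admitting $P_\gamma$ as a fundamental parallelepiped. For $\Q$-linear independence I would argue as follows: if $\sum_i a_iv_i=0$ with $a\in\Z^m$, then $\nu_1^{a_1}\cdots\nu_m^{a_m}$ has each of its first $m$ components equal to the identity of $PSL(2,\R)$, so by irreducibility condition (i) it is the identity in $\psl$, forcing $a=0$ since $\nu_1,\dots,\nu_m$ freely generate the rank-$m$ group $C(\nu)$. To upgrade this to $\R$-linear independence, I would observe that an $\R$-dependence would produce a non-discrete $\Z$-span in $\R^m$, yielding a sequence of distinct $a^{(k)}\in\Z^m$ with $\sum_i a_i^{(k)}v_i\to 0$; but the corresponding rotations on the last $n-m$ coordinates lie in a compact torus, so after passing to a subsequence I would obtain infinitely many distinct elements of $C(\nu)\subset\Gamma$ each moving a fixed $z\in\HH^n$ into an arbitrarily small neighbourhood, contradicting the discontinuity of the $\Gamma$-action on $\HH^n$.

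Once the lattice structure is secured, the fundamental-domain claim is routine. For any $z\in\HH^n$ the tiling of $\R^m$ by translates of $P_\gamma$ yields a (generically unique) $a\in\Z^m$ with $(\log|z_1|,\dots,\log|z_m|)+\sum_ia_iv_i\in P_\gamma$, so $\nu_1^{a_1}\cdots\nu_m^{a_m}z\in F_{C(\nu)}$; and if $z$ and $z'=\nu_1^{a_1}\cdots\nu_m^{a_m}z$ both lie in $F_{C(\nu)}$, the difference $\sum_i a_iv_i$ of their first $m$ coordinate vectors must vanish by the definition of a fundamental parallelepiped, so $a=0$ and $z'=z$. The main obstacle is the passage from $\Q$- to $\R$-linear independence, where both the irreducibility hypothesis and the discreteness of $\Gamma$ have to be brought in together; once that is settled the rest is a standard lattice-tiling argument.
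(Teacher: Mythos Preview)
The paper does not actually prove this proposition: it simply remarks that ``the next statement follows now easily'' from the displayed relations $|\nu_i^{(k)}z_k|=N_{\gamma_i}^{(k)}|z_k|$ and $\arg\nu_i^{(k)}z_k=\arg z_k$ for $k\le m$. Your argument is correct and supplies exactly the details the paper leaves implicit, in particular the verification that the vectors $v_i=(\log N_{\gamma_i}^{(1)},\dots,\log N_{\gamma_i}^{(m)})$ span a genuine rank-$m$ lattice in $\R^m$. Your two-step independence argument (first over $\Q$ via irreducibility condition~(i) and the free generation of $C(\nu)$, then over $\R$ via the proper discontinuity of the $\Gamma$-action combined with compactness of the rotation part) is the natural way to do this.

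One small phrasing issue in the $\R$-independence step: the images $\nu_1^{a_1^{(k)}}\cdots\nu_m^{a_m^{(k)}}z$ need not approach $z$ itself, since the rotational part on the last $n-m$ coordinates may converge to a nontrivial rotation. What you actually get after passing to a subsequence is infinitely many distinct orbit points accumulating at \emph{some} point of $\HH^n$, and that already contradicts proper discontinuity. With that adjustment the argument is complete.
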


Before the next theorem we introduce one more notation. Let $f_{\lambda}(\tht)$ be the unique solution of the differential equation
\begin{equation}\label{mixed_diff_eq}
F''(\tht) = \frac{\lambda}{\cos^2\tht}F(\tht)\qquad(\tht\in(-\pi/2,\pi/2))
\end{equation}
with the initial condition $f_{\lambda}(0) = 1$
and $f'_{\lambda}(0) = 0$. We are now ready to state

 \begin{thm}\label{mixed_main_thm}
 The contribution
of the mixed and totally hyperbolic classes in the truncated trace
is
\[
\Sigma_{\mathrm{mix}}=\sum_{\{\gamma\}\textrm{ mixed or totally hyperbolic}}T_\gamma
+o(A),
\]
where for a class $\{\gamma\}$, for which the first $m$ coordinates of $\gamma$ are hyperbolic
and the rest are elliptic, the value of $T_\gamma$ is
\begin{align*}
&(2\pi)^{n-m}F_\gamma(0,\dots,0)\times\\
&\quad\times\int\limits_0^\infty\dots\int\limits_0^\infty
\int\limits_{-\frac{\pi}{2}}^{\frac{\pi}{2}}
\dots
\int\limits_{-\frac{\pi}{2}}^{\frac{\pi}{2}}
\psi(N(\vartheta_1,\gamma^{(1)}),\dots,
N(\vartheta_m,\gamma^{(m)}),
S(r_{m+1},\theta_\gamma^{(m+1)}),\dots,S(r_n,\theta_\gamma^{(n)}))\\
&\qquad\qquad\qquad\qquad\qquad\times
\left(\prod_{k=1}^mf_{\lambda_k}(\tht_k)\,\frac{d\tht_k}{\cos^2\tht_k}\right)
\left(\prod_{k=m+1}^ng_{\lambda_k}(r_k)\sinh r_k)\,dr_{k}\right),
\end{align*}
where $N(\vartheta,\gamma^{(k)})$ is defined in (\ref{mixed_N_def}),
$S(r,\theta)=(2\sinh r\sin\theta)^2$
for any $r,\tht\in\R$, 
$(\lambda_1,\dots,\lambda_n)$ is the Laplacian eigenvalue vector of $u$, 
the functions $f_{\lambda_k}$ were defined before the theorem, 
the functions $g_{\lambda_k}$ were defined before
Theorem \ref{elliptic_main_thm}, the vector
$(\theta_\gamma^{(m+1)},\dots,\theta_\gamma^{(n)})$ is given by (\ref{mixed_conj})
and
\[
F_\gamma (0,\dots,0) = 
\int\limits_{(\log r_1,\dots,\log r_m)\in P_\gamma}u(\varrho_\gamma^{(1)}(r_1i),\dots, \varrho_\gamma^{(m)}(r_mi),
\varrho_\gamma^{(m+1)}i,\dots,\varrho_\gamma^{(n)}i)
\prod_{k=1}^m\frac{dr_k}{r_k}.
\]     
Here $\varrho_\gamma\in\psl$ is an element for which $\varrho_\gamma^{-1}\gamma\varrho$ is
of the form (\ref{mixed_conj}) and the set $P_\gamma$ is given in Proposition 
\ref{totally_hyp_fundom_prop}. An analogous formula gives the value of $T_\gamma$ when
the $m$ hyperbolic and $n-m$ elliptic coordinates are distributed differently.
Moreover, the value of $T_\gamma$ is zero except for finitely many classes.
\end{thm}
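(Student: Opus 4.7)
The plan is to diagonalise $\gamma$ along its fixed axes, introduce adapted coordinates, and use the fact that $u$ is a joint eigenfunction of $\Delta_1,\dots,\Delta_n$ to reduce $T_\gamma$ to a product of one-dimensional transforms. Starting from (\ref{tgamma_final_form}) with $\varrho=\varrho_\gamma$, we have
\[
T_\gamma=\int_{F_{C(\nu)}}k(z,\nu z)u(\varrho_\gamma z)\,d\mu(z),
\]
and I pass to mixed polar coordinates: for each hyperbolic $k\leq m$ write $z_k=r_k(\sin\vartheta_k+i\cos\vartheta_k)$ with $t_k=\log r_k$, giving $d\mu(z_k)=dt_k\,d\vartheta_k/\cos^2\vartheta_k$; for each elliptic $k>m$ use geodesic polar coordinates $(r_k,\phi_k)$ around $i$, giving $d\mu(z_k)=\sinh r_k\,dr_k\,d\phi_k$. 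A direct calculation yields
\[
k(z,\nu z)=\psi\bigl(N(\vartheta_1,\gamma^{(1)}),\dots,N(\vartheta_m,\gamma^{(m)}),S(r_{m+1},\theta_\gamma^{(m+1)}),\dots,S(r_n,\theta_\gamma^{(n)})\bigr),
\]
depending only on the $\vartheta_k$ ($k\leq m$) and the $r_k$ ($k>m$). By Proposition \ref{totally_hyp_fundom_prop} the fundamental domain condition reduces to $(t_1,\dots,t_m)\in P_\gamma$, with all other variables ranging freely.

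Since the kernel depends neither on $t_k$ ($k\leq m$) nor on $\phi_k$ ($k>m$), I define
\[
\Phi(\vartheta,r):=\int_{P_\gamma}\int_{[0,2\pi)^{n-m}}u(\varrho_\gamma z)\prod_{k\leq m}dt_k\prod_{k>m}d\phi_k,
\]
so that $T_\gamma=\int\psi(\dots)\Phi(\vartheta,r)\prod_{k\leq m}(d\vartheta_k/\cos^2\vartheta_k)\prod_{k>m}\sinh r_k\,dr_k$. In these coordinates $\Delta_k=\cos^2\vartheta_k(\partial_{t_k}^2+\partial_{\vartheta_k}^2)$ for $k\leq m$ and $\Delta_k=\partial_{r_k}^2+\coth r_k\,\partial_{r_k}+\sinh^{-2}r_k\,\partial_{\phi_k}^2$ for $k>m$. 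Inserting these into $(\Delta_k+\lambda_k)u=0$ and integrating in $(t_1,\dots,t_m)$ over $P_\gamma$ and in $\phi_k$ over $[0,2\pi)$ kills the $\partial_{t_k}^2$ and $\partial_{\phi_k}^2$ terms by $C(\nu)$-periodicity of $u\circ\varrho_\gamma$ (reduced, via a change of basis to the lattice generators of $P_\gamma$, to Fubini and the fundamental theorem of calculus). This produces exactly the ODEs (\ref{mixed_diff_eq}) and (\ref{elliptic_diff_eq}) satisfied by $f_{\lambda_k}$ in $\vartheta_k$ and $g_{\lambda_k}$ in $r_k$. For each hyperbolic $\vartheta_k$ both $\psi$ (which depends on $\vartheta_k$ only through the even quantity $N(\vartheta_k,\gamma^{(k)})$) and the measure $d\vartheta_k/\cos^2\vartheta_k$ are even in $\vartheta_k$, so only the even part of $\Phi$ in that variable survives the integration; this even part is the unique ODE solution with vanishing derivative at $\vartheta_k=0$, contributing a factor $\Phi|_{\vartheta_k=0}\cdot f_{\lambda_k}(\vartheta_k)$. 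For each elliptic $r_k$, regularity of $\Phi$ at $r_k=0$ (the fixed point $z_k=i$) singles out the regular solution and contributes $\Phi|_{r_k=0}\cdot g_{\lambda_k}(r_k)$. Iterating across all $n$ coordinates collapses $\Phi$ to $\Phi(0,\dots,0)\prod_{k\leq m}f_{\lambda_k}(\vartheta_k)\prod_{k>m}g_{\lambda_k}(r_k)$, and since the $\phi_k$-integrands are trivial at $r_k=0$ and $dt_k=dr_k/r_k$, we get $\Phi(0,\dots,0)=(2\pi)^{n-m}F_\gamma(0,\dots,0)$, yielding the claimed formula.

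For finiteness, compact support of $\psi$ forces $N_\gamma^{(k)}+(N_\gamma^{(k)})^{-1}$ to be bounded for each $k$, and discreteness of $\Gamma$ then leaves only finitely many admissible conjugacy classes. The $o(A)$ error — the gap between $\Sigma_{\mathrm{mix}}$ (integration over $F_A$) and $\sum T_\gamma$ (integration over $F$) — is handled by routine cuspal estimates: no mixed or totally hyperbolic $\gamma$ fixes a cusp, so $k(z,\gamma z)$ has bounded support away from cusp neighbourhoods for each contributing class, and Proposition \ref{u_bound_prop} controls $u$ in those neighbourhoods. I expect the main obstacle to be the integration-by-parts on the skewed parallelepiped $P_\gamma$: expressing $\partial_{t_k}^2$ in the basis of lattice generators so that $C(\nu)$-periodicity cleanly eliminates it after Stokes requires careful bookkeeping, though the outcome is as above. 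Once this step is in place, the even/odd symmetry in $\vartheta_k$ and regularity at $r_k=0$ separate the variables and yield the product structure of the theorem.
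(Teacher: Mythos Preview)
Your derivation of the formula for $T_\gamma$ follows essentially the same route as the paper: conjugate to $\nu$, pass to mixed polar coordinates, observe that $k(z,\nu z)$ depends only on $(\vartheta_1,\dots,\vartheta_m,r_{m+1},\dots,r_n)$, and separate variables via the ODEs (\ref{mixed_diff_eq}) and (\ref{elliptic_diff_eq}) together with the parity argument in $\vartheta_k$. One difference worth noting: the obstacle you flag (integration by parts in $\partial_{t_k}^2$ over the skewed parallelepiped $P_\gamma$) is genuine for your approach but the paper simply avoids it. Instead of killing $\partial_{t_k}^2$ by Stokes, the paper first observes that $F(z):=\int_{P_\gamma}u(\varrho_\gamma(r_1z_1,\dots,r_mz_m,i,\dots,i))\prod dr_k/r_k$ is invariant under \emph{arbitrary} coordinate-wise dilations $z_k\mapsto R_k z_k$ (any such dilation translates $P_\gamma$ in $\log$-coordinates, and the integrand is lattice-periodic, so the integral over any translate of a fundamental domain is the same). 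Thus $F$ depends only on $(\vartheta_1,\dots,\vartheta_m)$, and since $\Delta_k$ commutes with the group action, $F$ is still a $\Delta_k$-eigenfunction; the radial part of $\Delta_k$ then drops out automatically, yielding (\ref{diff_eq_0}) with no boundary bookkeeping at all. Your approach works too, but this is cleaner.

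The real gap is in your finiteness argument. Boundedness of $N_\gamma^{(k)}+(N_\gamma^{(k)})^{-1}$ (equivalently of $|\mathrm{tr}\,\gamma^{(k)}|$) for each $k$ is correct, but the step ``discreteness of $\Gamma$ then leaves only finitely many admissible conjugacy classes'' is not a proof. The paper uses genuine arithmetic input here: by Theorem~\ref{1_3_commensurable_thm} a bounded power $\gamma^{M_\gamma}$ lies in (a conjugate of) a Hilbert modular group $\Gamma_K$, so the traces of $(\gamma^{M_\gamma})^{(k)}$ are conjugate algebraic integers in $K$ with bounded absolute values, hence bounded norm, hence take only finitely many values; then Proposition~I.7.1 of \cite{efrat} gives finitely many $\Gamma_K$-conjugacy classes with prescribed traces. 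Discreteness alone, without either this arithmetic route or a substitute geometric argument (e.g.\ counting closed geodesics of bounded length on the quotient), does not immediately yield the conclusion, and for mixed elements in particular you must also explain why only finitely many elliptic angle vectors $(\theta_\gamma^{(m+1)},\dots,\theta_\gamma^{(n)})$ can occur.
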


\subsection{Contribution of hyperbolic-parabolic classes}\label{hyp-par_sec}

We continue with the contribution of those classes whose elements
have only hyperbolic coordinates but also fix a cusp. 
Let $\gamma=(\gamma^{(1)},\dots,\gamma^{(n)})$ be  such an element and
let $x=(x_1,\dots,x_n)$ be a cusp fixed by $\gamma$. 
That is, $x_i$ is a fixed point of the hyperbolic coordinate $\gamma^{(i)}$ and we denote
its other one by $x_i'$.
Then, by the results of \S20 in \cite{shimizu} the fixed point
$(x_1',\dots,x_n')$ of $\gamma$ is also a cusp.

Every hyperbolic-parabolic class is represented 
by an element that fixes a  cusp $\kappa\in\mathcal{S}$.
An  element of this type is conjugated by the scaling element 
$\sigma_\kappa\in \psl$ to an element of the form
\[
\gamma_{m,\alpha}^\kappa:= 
\mtx{(u^\kappa_m)^{1/2}}{\alpha (u^\kappa_m)^{-1/2}}{0}{(u^\kappa_m)^{-1/2}},
\]
where $\alpha\in\tk_\kappa$, $m=(m_1,\dots,m_{n-1})\in \Z^{n-1}\setminus\{0\}$ and 
$u^\kappa_m=(\eps_1^\kappa)^{m_1}\dots(\eps_{n-1}^\kappa)^{m_{n-1}}\in\Lambda_\kappa$. 
The cusp in the notation $u_m^\kappa$ indicates that this unit depends also on the
multiplier group $\Lambda_\kappa$ and its generators. However, as a byproduct of 
the proof of this section's main result we also get the following 
\begin{prop}\label{multi_group_prop}
    The multiplier group $\Lambda_\kappa$ is the same for any $\kappa\in\mathcal{S}$.
\end{prop}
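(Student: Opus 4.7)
My plan is to derive the proposition from a careful matching of hyperbolic-parabolic conjugacy classes across their two fixed cusps.

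First, I take any hyperbolic-parabolic element $\gamma\in\Gamma$. By the results of Shimizu \cite{shimizu} recalled at the start of this section, the ``opposite'' cusp $\kappa_2'=(x_1',\dots,x_n')$ of any cusp $\kappa_1'=(x_1,\dots,x_n)$ fixed by $\gamma$ is again a cusp. Let $\kappa_1,\kappa_2\in\mathcal{S}$ represent the $\Gamma$-orbits of $\kappa_1'$ and $\kappa_2'$ and fix $\delta_i\in\Gamma$ with $\delta_i\kappa_i=\kappa_i'$. Conjugating $\delta_i^{-1}\gamma\delta_i$ (which fixes $\kappa_i$) by $\sigma_{\kappa_i}$ puts it in the form
\[
\sigma_{\kappa_i}^{-1}\delta_i^{-1}\gamma\delta_i\sigma_{\kappa_i}=\gamma^{\kappa_i}_{m_i,\alpha_i},
\]
so that $u^{\kappa_i}_{m_i}\in\Lambda_{\kappa_i}$ for $i=1,2$.

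Second, I would observe that both sides of the display are $PSL(2,\R)^n$-conjugate to $\gamma$, so coordinate by coordinate they have the same pair of eigenvalues. Since $(\kappa_1')^{(k)}$ and $(\kappa_2')^{(k)}$ are the two distinct fixed points of the hyperbolic $\gamma^{(k)}$, the two conjugations send opposite fixed points to $\infty$ in each coordinate, and hence the upper-triangular matrices carry reciprocal eigenvalues on their diagonals. A short computation then gives $u^{\kappa_2}_{m_2}=(u^{\kappa_1}_{m_1})^{-1}$. Since multiplier groups are closed under inversion, $u^{\kappa_1}_{m_1}\in\Lambda_{\kappa_2}$ and symmetrically $u^{\kappa_2}_{m_2}\in\Lambda_{\kappa_1}$: each hyperbolic-parabolic class transfers its multiplier between the two cusps in $\mathcal{S}$ to which it is attached.

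Third, to upgrade this per-class statement to the full equality $\Lambda_\kappa=\Lambda_{\kappa'}$ for arbitrary $\kappa,\kappa'\in\mathcal{S}$, I would exploit the decomposition of $\Sigma_{\mathrm{hyp-par}}$ that the main theorem of this section produces. Every non-identity element of $\Lambda_{\kappa_1}$ arises as $u^{\kappa_1}_m$ for some $m\in\Z^{n-1}\setminus\{0\}$, and for each fixed $m$ the other fixed cusp $\sigma_{\kappa_1}(\alpha/(1-u^{\kappa_1}_m))$ depends on $\alpha\in\tk_{\kappa_1}$. The orbit analysis underlying the main theorem---parametrizing hyperbolic-parabolic classes by ordered pairs of cusps in $\mathcal{S}\times\mathcal{S}$ together with additional discrete data---should show that every $\Gamma$-orbit in $\mathcal{S}$ is attained as $\alpha$ varies. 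Combined with the second step, this gives $\Lambda_{\kappa_1}\subseteq\Lambda_{\kappa'}$ for every $\kappa'\in\mathcal{S}$, and by symmetry the proposition follows.

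The principal obstacle is this final step: showing that for fixed multiplier $u^{\kappa_1}_m$ the other fixed cusp can be chosen in any prescribed $\Gamma$-orbit by adjusting $\alpha$. This is not transparent from the definitions in isolation, but it should fall out of the explicit bijection between hyperbolic-parabolic $\Gamma$-conjugacy classes and pairs-of-cusps data that the main theorem's proof establishes.
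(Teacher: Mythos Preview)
Your Steps 1 and 2 are correct, and Step 2 is in fact more direct than the paper's route. Where you read off $u^{\kappa_2}_{m_2}=(u^{\kappa_1}_{m_1})^{-1}$ immediately from the eigenvalues under conjugation, the paper extracts the corresponding relation indirectly: it compares the two main terms $M_\kappa(m,\alpha,A)$ and $M_{\kappa'}(m',\beta,A)$ attached to the same conjugacy class, specialises the automorphic form $u$ to an Eisenstein series $E_\kappa(z,s,0)$ (so that $m_{u,\kappa}=m_{u,\kappa'}=0$ and the main terms are nonzero), and then exploits the freedom in the test function $g$ to force $u^\kappa_m$ and $u^{\kappa'}_{m'}$ to agree. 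Your eigenvalue argument bypasses all of this.

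The genuine gap is Step 3, and it does not fall out of the conjugacy-class parametrisation in the way you hope. The ``strong connectivity'' you want --- that for a \emph{fixed} generator $\eps^{\kappa_1}_j\in\Lambda_{\kappa_1}$ one can realise any prescribed cusp class $\kappa'\in\mathcal S$ as the second fixed cusp by adjusting $\alpha$ --- would force $(1-\eps^{\kappa_1}_j)\,q\in\tk_{\kappa_1}$ for every cusp $q$ of $\sigma_{\kappa_1}^{-1}\Gamma\sigma_{\kappa_1}$, a divisibility condition that can fail already for the Hilbert modular group. The paper closes the argument with an explicit arithmetic lemma: for any two cusps $\kappa,\kappa'$ there exists \emph{some} hyperbolic-parabolic element of $\Gamma$ fixing both. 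This is proved first for the Hilbert modular group $\Gamma_K$ by writing the stabiliser of a cusp $\lambda_j$ in coordinates and choosing a unit $u\in\OO_K^\times$ with $1-u^2$ lying in a suitable ideal so that the second fixed point lands at the prescribed cusp; the general case then follows from the commensurability of $\Gamma$ with $\Gamma_K$ (Theorem~\ref{1_3_commensurable_thm}). You should replace your speculative Step 3 with this lemma, and then combine it with the per-class transfer from Step 2 along the lines the paper indicates.
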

This fact allows us to drop the index from the notation of
the multiplier group and we simply write $\Lambda$ in the following.
Also, we can and will fix
 the same generators $\eps_1,\dots,\eps_{n-1}$ for every cusp and 
 therefore it is legitimate to write $u_m$ instead of $u_m^\kappa$.
It follows also that the matrix $\mathcal{E}_\kappa$ and consequently
the integer vector $m_{u,\kappa}$ (defined in (\ref{eps_s_eqs})) are 
independent of $\kappa$ and will be denoted simply by $\mathcal{E}$ and $m_u$,
respectively.
 Note that the lattice $\tk_\kappa$ does depend on the cusp $\kappa$.
 
The element $\gamma_{m,\alpha}^\kappa$ 
fixes the points $\infty$ and $q= \frac{\alpha}{1-u_m}$
and according to the first paragraph of this section both points are cusps for
$\sigma_\kappa^{-1}\Gamma\sigma_\kappa$.
We will denote by $\tilde{\kappa}_{m,\alpha}\in\mathcal{S}$ the cusp for $\Gamma$ that can 
be taken (by an element of $\Gamma$) to $\sigma_\kappa q$.

The centralizer $C(\gamma_{m,\alpha}^\kappa)$ of $\gamma_{m,\alpha}^\kappa$ in
$\sigma_\kappa^{-1}\Gamma\sigma_\kappa$ is given  in \S20 of \cite{shimizu}:
\begin{prop}\label{hyp_par_centralizer}
The centralizer $C(\gamma_{m,\alpha}^\kappa)$ of the element $\gamma_{m,\alpha}^\kappa$ 
is a free abelian group of rank $n-1$ generated by
some elements $\gamma(l_1),\dots,\gamma(l_{n-1})$, where 
$l_j\in \Z^{n-1}\setminus\{0\}$ for any $1\leq j \leq n-1$ and
\[
\gamma(l_j)=\mtxskip{u_{l_j}^{1/2}}{ \frac{ u_{l_j}-1 }{u_m-1}\alpha u_{l_j}^{-1/2} }{0}{ u_{l_j}^{-1/2}}.
\]
\end{prop}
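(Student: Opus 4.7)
The plan is to identify $C(\gamma_{m,\alpha}^\kappa)$ with the subgroup of $\sigma_\kappa^{-1}\Gamma\sigma_\kappa$ consisting of elements that fix both fixed points of $\gamma_{m,\alpha}^\kappa$, parameterise them by a single multiplier-at-$\infty$ invariant landing in $\Lambda_\kappa\cong\Z^{n-1}$, and then exhibit an index-finite image, giving the $n-1$ free generators of the required shape.

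First I would verify that any $\delta\in C(\gamma_{m,\alpha}^\kappa)$ must fix both $\infty$ and $q:=\alpha/(1-u_m)$. Coordinate-wise, $\gamma_{m,\alpha}^{\kappa,(k)}$ is hyperbolic (its trace $(u_m^{(k)})^{1/2}+(u_m^{(k)})^{-1/2}$ has absolute value greater than $2$ since $u_m^{(k)}\neq 1$), so its centraliser in $PSL(2,\R)$ is the one-parameter group of elements sharing its two fixed points; elements swapping those fixed points would conjugate the hyperbolic matrix to its inverse rather than commute with it. Applying this in each coordinate forces $\delta$ to fix $\infty$ and $q$ pointwise.

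Next, since $\delta$ lies in the stabiliser of $\infty$ inside $\sigma_\kappa^{-1}\Gamma\sigma_\kappa$, it has the upper-triangular form $\delta=\mtx{a}{b}{0}{a^{-1}}$ with $v:=a^2\in\Lambda_\kappa$. Imposing $\delta q=q$, i.e.\ $vq+ab=q$, forces $ab=(1-v)q=(v-1)\alpha/(u_m-1)$, which is exactly the shape of $\gamma(l_j)$ in the proposition once $v=u_{l_j}$. The assignment $\delta\mapsto v$ is a homomorphism (upper-triangular matrix multiplication), and it is injective because $q$ together with $v$ determines $ab$ and hence $b$. Therefore $C(\gamma_{m,\alpha}^\kappa)$ embeds as a subgroup $L\subseteq \Lambda_\kappa\cong\Z^{n-1}$ and is automatically a free abelian group of rank at most $n-1$.

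The main obstacle is then showing that $L$ has rank exactly $n-1$, equivalently that $L$ has finite index in $\Lambda_\kappa$. Here I would use the fact, noted just before the proposition and drawn from \S20 of \cite{shimizu}, that the other fixed point $q$ is itself a cusp of $\sigma_\kappa^{-1}\Gamma\sigma_\kappa$. Its stabiliser therefore contains a multiplier group of rank $n-1$; conjugating by a scaling element sending $q\mapsto\infty$ recasts those elements in upper-triangular form at $q$, and the sublattice of multipliers whose ``translation-at-$q$'' component is forced to vanish by simultaneously fixing $\infty$ has finite index and maps into $L$ under the correspondence $v\mapsto v^{-1}$ between the multipliers at $q$ and at $\infty$ for an element fixing both cusps. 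A cleaner route is to invoke Theorem \ref{1_3_commensurable_thm}: $\Gamma$ is commensurable with a Hilbert modular group $\Gamma_K$, and for $\Gamma_K$ the centraliser of a hyperbolic-parabolic element is built from the units of an order in $K$, which by Dirichlet's unit theorem has rank $n-1$; commensurability transports this rank to $\Gamma$. Either way, one produces $n-1$ independent elements $\gamma(l_1),\dots,\gamma(l_{n-1})$ of the asserted form that generate $L$.
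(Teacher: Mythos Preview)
The paper does not supply its own proof of this proposition: immediately before the statement it says the centralizer ``is given in \S20 of \cite{shimizu}'', and the result is simply quoted. So there is no in-paper argument to compare yours against.

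As a standalone sketch your steps (1)--(4) are correct and give a clean embedding of $C(\gamma_{m,\alpha}^\kappa)$ into $\Lambda_\kappa\cong\Z^{n-1}$ with the asserted matrix form. The only substantive issue is step (5), establishing that the image has full rank $n-1$. Your route (a) is essentially circular: the assertion that ``the sublattice of multipliers whose translation-at-$q$ component is forced to vanish by simultaneously fixing $\infty$ has finite index'' is, after interchanging the roles of $q$ and $\infty$, exactly the statement you are trying to prove. Route (b) via commensurability with $\Gamma_K$ does work, but it needs one ingredient you omitted: a priori one only knows that some power $\gamma^N$ of $\gamma:=\gamma_{m,\alpha}^\kappa$ lies in a conjugate of $\Gamma_K$, so Dirichlet's unit theorem controls the rank of $C(\gamma^N)$, not of $C(\gamma)$. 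To close this gap, use irreducibility condition (iii) from the introduction: the centralizer in $\psl$ of any nontrivial element of $\Gamma$ is abelian. Since $\gamma\in C_\Gamma(\gamma^N)$ and the latter group is abelian, every element of $C_\Gamma(\gamma^N)$ already commutes with $\gamma$, whence $C_\Gamma(\gamma^N)=C_\Gamma(\gamma)$ and the rank transfers.
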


In the following we fix a generating set of elements described in the proposition above and 
 define 
the $(n-1)\times(n-1)$ matrix 
\begin{equation}\label{L_mtx_def}
L_{m,\alpha}^{\kappa}:=\left[
\begin{array}{cccc}
l_1^{(1)}  &l_2^{(1)}  & \dots & 
l_{n-1}^{(1)}\\
l_1^{(2)}  & l_2^{(2)}  &\dots & 
l_{n-1}^{(2)}\\
\vdots &\vdots  & \ddots & \vdots\\
 l_1^{(n-1)} &l_2^{(n-1)} & \dots 
& l_{n-1}^{(n-1)} 
\end{array}
\right].
\end{equation}
As before, 
we need to describe  the fundamental domain 
$F_{C(\gamma_{m,\alpha}^\kappa)}$ of $C(\gamma_{m,\alpha}^\kappa)$.
One shows by induction that 
$\gamma(l_j)^h =\gamma(hl_j)$ holds for any $h\in\Z$. 
Let $C$
denote the group generated by the (clearly independent) elements
\begin{align}\label{rho_l_def}
\rho_{l_j}:=\mtx{u_{l_j}^{1/2}}{0}{0}{u_{l_j}^{-1/2}}\qquad(1\leq j \leq n-1).
\end{align}
We 
set
$T =\mtx{1}{-\frac{\alpha}{1-u_{m}}}{0}{1}$,
then 
$C(\gamma_{\alpha,m}^\kappa) = T^{-1} C T$
and hence if $F_{C}$ is a fundamental domain for $C$, then
\[
F_{C(\gamma_{m,\alpha}^\kappa)}=T^{-1} F_{C} = F_{C}+\frac{\alpha}{1-u_{m}}=F_{C}+q
\]
is a fundamental domain for $C(\gamma_{\alpha,m}^\kappa)$.

It remains to describe the fundamental domain $F_C$.
As in the case of mixed and totally hyperbolic elements we use polar coordinates. That is, 
for a point $z=(z_1,\dots,z_n)\in\HH^n$ we write 
$z_k=r_ke^{i(\pi/2+\tht_k)}$ where $r_k\in \R^+$ and
$-\frac{\pi}{2} < \tht_k < \frac{\pi}{2}$ ($1\leq k\leq n$). 
Let $\tilde{P}_{m,\alpha}^\kappa$ be the fundamental domain of the $n-1$ dimensional lattice in
$V=\{a\in\R^n:\,a_{1}+\dots+a_{n}=0\}$
generated by the vectors 
$v_j=l_j^{(1)}\log\eps_1+\dots +l_j^{(n-1)}\log \eps_{n-1}$ 
($1\leq j \leq n-1$). For later purposes we specify the choice of 
$\tilde{P}_{m,\alpha}^\kappa$, namely we take the shifted image of the parallelpiped 
spanned by $v_1,\dots,v_{n-1}$ (in $V$) symmetric to the origin.
Let us fix the unit vector $\mathbf{1}=(n^{-\frac{1}2},\dots,n^{-\frac{1}2})^T$, 
it spans the subspace $V^\perp$.
If $P_{m,\alpha}^\kappa=\{t \mathbf{1}+\tilde{P}_{m, \alpha}^\kappa:\, t\in\R\}$,
then the fundamental domain
$F_{C}$ is given by
\[
(\tht_1,\dots,\tht_n)\in(-\pi/2;\pi/2),\qquad 
(\log r_1,\dots, \log r_n)\in P_{m,\alpha}^\kappa.
\]

The contribution of the class belonging to $\gamma_{m,\alpha}^\kappa$ 
in the truncated trace can be divided into
two parts. A main term (that diverges as $A\to\infty$) comes from the zeroth Fourier
coefficient of $u(\sigma_\kappa (z+q))$ and the transformed zeroth coefficient of $u(\sigma_{\tilde{\kappa}_{m,\alpha}}(z+q))$
while we obtain the remaining convergent part by subtracting these  from 
$u(\sigma_\kappa (z+q))$. Note that here the argument is shifted 
since we will give the result in terms of the fundamental domain $F_C$.
For any cusp $\kappa'\in\mathcal{S}$ we set
\[
M_{\kappa'}(z):=
\eta_{\kappa'} y_1^{s_1}\dots y_n^{s_n}
+\phi_{\kappa'} y_1^{1-s_1}\dots y_n^{1-s_n},
\]
 and subtract $M_\kappa(z+q)=M_\kappa(z)$ from $u(\sigma_\kappa (z+q))$, while
 in the case of
 $\tilde{\kappa}_{m,\alpha}$ we first apply a transformation that maps $q$ to $\infty$.
This is performed by an element 
$\sigma_{\tilde{\kappa}_{m,\alpha}}^{-1}\gamma\sigma_\kappa\in\sigma_{\tilde{\kappa}_{m,\alpha}}^{-1}\Gamma\sigma_\kappa$
 having the matrix form
$\mtx{e}{f}{\frac{u_m-1}{\delta}}{\frac{\alpha}{\delta}}$
where $\delta=e\alpha+f(1-u_m)$. Using this notation
 we define
\[
\tilde{u}_{m,\alpha}^\kappa(z):=u\left(\sigma_\kappa(z+q)\right)
-M_\kappa\left(z\right)
-M_{\tilde{\kappa}_{m,\alpha}}\left(-\frac{\delta^2E_m^{-2}u_m^{-1}}{z}\right)
\]
where $E_m:=u_m^{-1/2}-u_m^{1/2}$. Note that it is convenient to work with the quantity 
$E_m$ because of its skew-symmetry in $m$. We mention in advance
that though the vector $\delta$ depends on the choice of 
$\sigma_{\tilde{\kappa}_{m,\alpha}}^{-1}\gamma\sigma_\kappa$, but the 
norm of $\delta^2$ depends only on $m$ and $\alpha$.
Note also that  
the translation invariance of $M_{\tilde{\kappa}_{m,\alpha}}(z)$ was also used
to simplify the defining formula of $\tilde{u}_{m,\alpha}$.

Before the main statement of the section we define an
equivalence relation
on the lattice $\tk_\kappa$ for any $\kappa$: 
the elements $\alpha,\beta\in \tk_\kappa$ are said to be equivalent
if
$\beta  = (u_{m}-1)
a+u_{l}\alpha$
holds
 for some $l\in\Z^{n-1}$ and $a\in\tk_\kappa$, that is, if and only if
 $\beta$ and  $u_l\alpha$ represent
the same element in the finite factor group 
$\tk_\kappa^m:=\tk_\kappa/(u_m-1)\tk_\kappa$. 
These classes (represented as elements of $\tk_\kappa^m/\Lambda$) are used
to list the hyperbolic-parabolic conjugacy classes in the next result:
\begin{thm}\label{hyp-par_thm}
 The contribution
of the hyperbolic-parabolic
classes in the truncated trace is
\[
\Sigma_{\mathrm{hyp-par}}
=\delta_{m_{u}}M(A)+
\sum_{\kappa\in\mathcal{S}}\,\,\sum_{m\in\Z^{n-1}\setminus \{0\}}
\,\,\sum_{\alpha\in \tk_\kappa^m/\Lambda}
C_\kappa(m,\alpha)+o(A),
\]
the main term $M(A)$ is given by
\begin{equation}\label{hyp_par_main_term}
\frac{\abs{\det \mathcal{E}}}{n}
\sum_{\kappa\in\mathcal{S}}
\left(
\frac{\eta_\kappa A^s}{s}
+\frac{\phi_\kappa A^{1-s}}{1-s}
\right)
\sum_{m\in\Z^{n-1}\setminus \{0\}}
g(\log u_m)
\end{equation}
where $\mathcal{E}$ and $g$ were defined in (\ref{eps_s_eqs}) and 
(\ref{2_1_Q_g_h_def}), respectively, 
and the term $C_\kappa(m,\alpha)$ is
\begin{align*}
\frac{1}{2}\int\limits_{\log r\in P_{m,\alpha}^\kappa}\tilde{u}_{m,\alpha}^\kappa(ri)
\prod_{k=1}^n\frac{dr_k}{r_k}&
\int\limits_{-\frac{\pi}{2}}^{\frac{\pi}{2}}
\dots
\int\limits_{-\frac{\pi}{2}}^{\frac{\pi}{2}}
\psi\left(
    \frac{E_m^2}{\cos^2\tht}
\right)
\left(\prod_{k=1}^n\frac{f_{\lambda_k}(\tht_k)\,d\tht_k}{\cos^2\tht_k}\right),
\end{align*}
where $(\lambda_1,\dots,\lambda_n)$ is the Laplacian eigenvalue vector of $u$ and
 $f_{\lambda_k}(\tht)$ is the unique solution of the differential equation
 (\ref{mixed_diff_eq})
 satisfying the initial condition $f_{\lambda_k}(0) = 1$ and $f'_{\lambda_k}(0) = 0$.
Moreover, the terms $C_\kappa(m,\alpha)$ and the terms in (\ref{hyp_par_main_term})
are zero for any cusp $\kappa$  for all but finitely many $m$.
\end{thm}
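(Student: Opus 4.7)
The plan is to compute the contribution $T_{\gamma_{m,\alpha}^\kappa}$ of each hyperbolic-parabolic class individually via the unfolding formula (\ref{tgamma_final_form}) with $\varrho=\sigma_\kappa$, and then sum over the parameters $(\kappa,m,\alpha)$. Since the fundamental domain of $C(\gamma_{m,\alpha}^\kappa)$ equals $F_C+q$ with $q=\alpha/(1-u_m)$, the substitution $z\mapsto z+q$ moves the integration to $F_C$. A direct computation in the polar coordinates $z_k=r_ke^{i(\pi/2+\tht_k)}$ reduces $k(z+q,\gamma_{m,\alpha}^\kappa(z+q))$ to the $r$-independent kernel $\psi(E_m^2/\cos^2\tht)$, while the measure factorises as $d\mu(z)=\prod_k dr_k\,d\tht_k/(r_k\cos^2\tht_k)$, so $T_{\gamma_{m,\alpha}^\kappa}$ becomes an integral over the product region $\{\log r\in P_{m,\alpha}^\kappa\}\times(-\pi/2,\pi/2)^n$. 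The parameterisation is handled by noting that $(\gamma_{m,\alpha}^\kappa)^{-1}$ has the same centralizer and yields the same $T_\gamma$, and that every class is represented by both of its two fixed cusps; this double counting is precisely compensated by the factor $\frac{1}{2}$ appearing in the definition of $C_\kappa(m,\alpha)$.

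Next I split $u(\sigma_\kappa(z+q))=\tilde u_{m,\alpha}^\kappa(z)+M_\kappa(z)+M_{\tilde\kappa_{m,\alpha}}(-\delta^2E_m^{-2}u_m^{-1}/z)$ and treat the three pieces separately. For the convergent $\tilde u$-piece the key step is a separation-of-variables identity: since $\tilde u$ is a joint eigenfunction of $\Delta_1,\dots,\Delta_n$ with eigenvalues $-\lambda_k$, is invariant under $z\mapsto\varepsilon z$ for $\varepsilon\in\Lambda$, and decays exponentially near both cusps fixed by $\gamma_{m,\alpha}^\kappa$ (by Proposition \ref{u_bound_prop}, which also justifies absolute convergence of the $F_C$-integral), the integration over the infinite radial $\mathbf 1$-slab in $P_{m,\alpha}^\kappa$ together with the transverse lattice integration picks out only the ``radially constant'' eigenmode in the joint spectral decomposition. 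Its angular profile is precisely $\prod_k f_{\lambda_k}(\tht_k)$, the unique solution of (\ref{mixed_diff_eq}) with $f_{\lambda_k}(0)=1$ and $f'_{\lambda_k}(0)=0$. Hence the $\tht$-integral against $\tilde u(re^{i(\pi/2+\tht)})$ reduces to $\tilde u(ri)$ multiplied by the $\tht$-integral of $\psi(E_m^2/\cos^2\tht)$ weighted by $\prod_k f_{\lambda_k}(\tht_k)/\cos^2\tht_k$, yielding exactly $C_\kappa(m,\alpha)$.

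For the zeroth-coefficient pieces $M_\kappa$ and $M_{\tilde\kappa_{m,\alpha}}$, I write $\log r=(t/\sqrt n)\mathbf 1+p$ with $t\in\R$ and $p$ in a fundamental domain $\tilde P_{m,\alpha}^\kappa\subset V$ of the transverse lattice, so that $Ny=e^{t\sqrt n}N(\cos\tht)$. The truncation $Y_0^\kappa(z+q)\le A$ becomes $t\le\frac{1}{\sqrt n}\log(A/N(\cos\tht))$, and the radial integral $\int_{-\infty}^{\cdot}e^{st\sqrt n}\,dt$ combines with the $(N\cos\tht)^s$ factor from $(Ny)^s$ to give a clean $A^s/(s\sqrt n)$; analogously $A^{1-s}/(1-s)$ arises from $(Ny)^{1-s}$. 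The transverse $p$-integral contributes $|\det L_{m,\alpha}^\kappa|\cdot|\det\mathcal E|/\sqrt n$, and the $\tht$-integral collapses via the Harish--Chandra-type identity $\int\psi(w/\cos^2\tht)\prod d\tht_k/\cos^2\tht_k=Q(w)/\prod\sqrt{w_k}$ to $g(\log u_m)/\prod_k|E_m^{(k)}|$; summing over $\alpha\in\tk_\kappa^m/\Lambda$ (whose aggregate weight cancels $\prod_k|E_m^{(k)}|$ via $|\det(u_m-I)|=\prod_k|E_m^{(k)}|$) and combining the matching contributions of both fixed cusps yields exactly (\ref{hyp_par_main_term}). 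When $m_u\neq 0$, the character $\chi(p)=\exp(2\pi i\sum_k m_{u,k}e_k^\kappa\cdot p)$ implicit in $\lambda_{m_u}^\kappa(y)$ is a non-trivial character of the transverse torus $V/\langle v_1,\dots,v_{n-1}\rangle$, so $\int_{\tilde P}\chi(p)\,dp=0$; this accounts for the $\delta_{m_u}$ prefactor.

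The principal technical obstacle is the separation-of-variables identity in the second paragraph: one must show that the radial integration over the infinite slab $P_{m,\alpha}^\kappa$, together with the $\Lambda$-invariance and the $n$ simultaneous eigenvalue equations, forces the angular profile of $\tilde u$ to reduce to exactly $\prod_k f_{\lambda_k}(\tht_k)$ with the stated normalization, generating at the same time the $\frac{1}{2}$ factor. A secondary obstacle is establishing Proposition \ref{multi_group_prop} as a byproduct: the $M_\kappa$ and $M_{\tilde\kappa_{m,\alpha}}$ contributions combine coherently only if the multiplier groups $\Lambda_\kappa$ and $\Lambda_{\tilde\kappa_{m,\alpha}}$ coincide, so that $u_m$ is well-defined on both sides and $m_{u,\kappa}=m_{u,\tilde\kappa_{m,\alpha}}$; this equality is extracted from how the $\Lambda_{\tilde\kappa_{m,\alpha}}$-invariance of $u$ transfers through the inversion $z\mapsto-\delta^2E_m^{-2}u_m^{-1}/z$ used to define $\tilde u_{m,\alpha}^\kappa$.
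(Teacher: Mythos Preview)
Your overall architecture matches the paper's, but there is a genuine gap in how you handle the truncation, and it propagates into a divergent integral.

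You begin by invoking the unfolding identity (\ref{tgamma_final_form}), but that identity is derived from (\ref{tgamma_def}) with the \emph{full} fundamental domain $F$, not $F_A$. The hyperbolic-parabolic contribution to the truncated trace is not given by a sum of $T_\gamma$'s; one must work directly with $\sum_\sigma\int_{\sigma(\sigma_\kappa^{-1}F_A)}$. The paper's substantial first step is to show, via the lower bound of Lemma~$2.9_1$ in \cite{freitag} applied to $N((1-u_m)a-\alpha c)$, that the kernel $k(z,\gamma_{m,\alpha}^\kappa z)$ already vanishes on the omitted pieces $\sigma(\sigma_\kappa^{-1}F_A^*)$ for all cosets \emph{except} those for which $\sigma$ fixes $\infty$ or sends $q$ to $\infty$. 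These two exceptional families yield exactly the two-sided condition $A_\tht\le Nr\le A^\tht$ on the radial variable. You retain only the upper bound $Y_0^\kappa(z+q)\le A$; the lower bound coming from the truncation near the \emph{other} fixed cusp $\tilde\kappa_{m,\alpha}$ is absent. Consequently your radial integral for the $M_{\tilde\kappa_{m,\alpha}}(-\delta^2E_m^{-2}u_m^{-1}/z)$ piece, which behaves like $\int_{-\infty}^{\cdot}e^{-st\sqrt n}\,dt$, diverges at $t\to-\infty$. In the paper the finite lower limit $A_\tht$ is precisely what produces the matching $A^s/s$ term attached to $\eta_{\tilde\kappa_{m,\alpha}}$.

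Two secondary points. First, your ``joint spectral decomposition'' description of the $\tilde u$-step is not what actually happens: after the $r$-integration the function $F_{m,\alpha}^\kappa(\tht)$ need \emph{not} be a product $\prod_k f_{\lambda_k}(\tht_k)$; the paper shows instead that $F$ satisfies (\ref{diff_eq_0}) in each $\tht_k$, decomposes it as $F(0,\tht_2,\ldots)f_{\lambda_1}(\tht_1)+\partial_1F(0,\tht_2,\ldots)\tilde f_{\lambda_1}(\tht_1)$, and kills the odd $\tilde f$-part by the evenness of the $\tht$-kernel, iterating in $k$. The $\tfrac12$ does not arise here; it comes solely from the double-counting in the $(\kappa,m,\alpha)$ parameterisation, as you correctly said earlier. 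Second, your route to Proposition~\ref{multi_group_prop} via ``transfer of $\Lambda_{\tilde\kappa}$-invariance through the inversion'' is not the paper's argument and is not clearly workable: the paper instead specialises $u$ to the Eisenstein series $E_\kappa(z,s,0)$ (forcing $m_{u,\kappa}=0$ and $\eta_\kappa=1$, $\eta_{\kappa'}=0$), equates the two main terms $M_\kappa(m,\alpha,A)=M_{\tilde\kappa_{m,\alpha}}(m',\beta,A)$, varies $g$ to deduce $u_m^\kappa=u_{m'}^{\tilde\kappa_{m,\alpha}}$, and closes with a number-theoretic lemma guaranteeing that any two cusps are joined by a hyperbolic-parabolic element.
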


\subsection{The \texorpdfstring{$\zeta$}{zeta}-functions at the cusps
}\label{zeta_section}

In this section we introduce the $\zeta$-function
belonging to the lattice $\tk_\kappa$ and the multiplier group $\Lambda$ where
$\kappa$ is any cusp in $\mathcal{S}$, these will be needed for the last part of our result 
in the next section.
In fact we define these objects in the following  general situation.
Let $L\leq \R^n$ be a 
lattice of full rank for which
the following holds: if $l=(l^{(1)},\dots,l^{(n)})^T\in L$ and $l^{(k)}=0$
for any $1\leq k\leq n$, then $l=0$. 
We define the
\emph{norm} of $l$ by $N l=\prod_{k=1}^nl^{(k)}$. Using this terminology,
we assume that any non-zero element of $L$ has non-zero norm. 
In addition, let $M\leq (\R^+)^n$ be a 
discrete norm-$1$ multiplicative subgroup of rank $n-1$ 
that acts on $\R^n$ by coordinate-wise multiplication so that $L$ is invariant under this action. 
That is, let us assume that $M\cong \Z^{n-1}$,
for every $\eps\in M$ $N\eps =1$ holds, and finally,
for any $\eps\in M$ and $l\in L$ we have 
$\eps l:=(\eps^{(1)}l^{(1)},\dots,\eps^{(n)}l^{(n)})^T\in L$.
We remark 
that $N(\eps l)=N l$ holds for every $\eps\in M$ and $l \in \R^n$
and hence the norm, restricted to the lattice $L$, is in fact defined on $M$-orbits.

Though it is more convenient to give some required technical statements in the above
context,
it is important to mention that this generality is illusory. Namely, it 
only simplifies the notations
and helps to focus on the important properties of the underlying objects, but does not
give a wider point of view since the setting 
we talk about is basically the same
as in our initial  situation above.
More concretely, a slight modification of the proof of 
Theorem 4 in \cite{shimizu}  gives the following
\begin{prop}\label{zeta_prop}
Assume that $L$ and $M$ are as above. Then there exists a totally real number 
field
$\Q\leq K$ of degree $n$ with embeddings $K^{(1)}\subset \R,\dots,K^{(n)}\subset \R$
such that for each $\eps=(\eps^{(1)},\dots,\eps^{(n)})^T\in M$ the coordinate
$\eps^{(k)}$ is a totally positive unit in $K^{(k)}$ for all $1\leq k\leq n$ and the coordinates of 
$\eps$ are conjugates of each other.
Moreover, there is
a vector $\nu=(\nu_1,\dots,\nu_n)^T\in\R^n$ with non-zero coordinates such that 
for any element $\alpha\in \nu\cdot L$ (where the coordinate-wise product is taken) 
we have $\alpha^{(k)}\in K^{(k)}$ for every $k$ and
the coordinates of $\alpha$ are conjugates of each other.
\end{prop}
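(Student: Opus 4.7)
The plan is to exhibit $K$ as $R \otimes_\Z \Q$, where $R$ is the commutative $\Z$-subalgebra of $\mathrm{End}_\Z(L)$ generated by the operators $A(\eps) : l \mapsto \eps l$ (coordinate-wise product) for $\eps \in M$; these are well-defined since $\eps L = L$ by assumption. Because $M$ is abelian, $R$ is commutative, and in the standard basis of $\R^n \supset L$ each $A(\eps)$ is the diagonal matrix $\mathrm{diag}(\eps^{(1)},\ldots,\eps^{(n)})$. Consequently $R$ embeds as a $\Z$-subalgebra of the product ring $\R^n$ via the $n$ coordinate projections $\chi_1,\ldots,\chi_n$.

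My first step is to check that the characters $\chi_k|_M$ are pairwise distinct. Otherwise $\chi_i|_M = \chi_j|_M$ for some $i \neq j$ would place the lattice $\log M$ inside $V \cap \{x_i = x_j\}$, a subspace of dimension $n-2$, contradicting that $\log M$ has full rank $n-1$ in the hyperplane $V$ from (\ref{1_2_3_hyperplane}). Since $R$ is generated by $A(M)$, the restrictions $\chi_k|_R$ are also distinct, giving $n$ distinct algebra homomorphisms $R \otimes \R \to \R$ and hence $\dim_\R(R \otimes \R) \geq n$; combined with $R \otimes \R \subseteq \R^n$ this yields $\dim_\Q(R \otimes \Q) = n$.

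The main nontrivial step is to show that $K := R \otimes \Q$ is a field rather than a non-trivial product of fields. As a subring of $\R^n$ it is semisimple, so $K \cong K_1 \times \cdots \times K_r$; the primitive idempotents $e_i$ correspond in $\R^n$ to indicator vectors of a partition $\{1,\ldots,n\} = S_1 \sqcup \cdots \sqcup S_r$. If $r \geq 2$, fix some $S_i \subsetneq \{1,\ldots,n\}$ and any $l_0 \in L \setminus \{0\}$; writing $N e_i = r_0$ with $r_0 \in R$ and $N \in \Z^+$, the element $r_0 \cdot l_0 \in L$ has $k$-th coordinate $N l_0^{(k)} \neq 0$ for $k \in S_i$ and $0$ for $k \notin S_i$, producing a nonzero lattice element with some zero coordinate---contradicting the hypothesis on $L$. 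Hence $K$ is a number field of degree $n$ whose embeddings are the $n$ real projections $\chi_k$, so $K$ is totally real. For $\eps \in M$ the operator $A(\eps) \in R \subset \OO_K$ is a unit (its inverse is $A(\eps^{-1}) \in R$), and its embeddings $\chi_k(A(\eps)) = \eps^{(k)} > 0$ are totally positive and conjugate to each other.

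For the lattice part, $L \otimes \Q$ is a $K$-module of $\Q$-dimension $n = \dim_\Q K$, hence a $1$-dimensional $K$-vector space. Fix any $l_0 \in L \setminus \{0\}$---all of whose coordinates are nonzero by hypothesis---and set $\nu := (1/l_0^{(1)}, \ldots, 1/l_0^{(n)})^T$. Any $\alpha \in L$ can then be written uniquely as $\alpha = k \cdot l_0$ with $k \in K$, which in coordinates reads $\alpha^{(j)} = \chi_j(k) l_0^{(j)}$, so $(\nu \alpha)^{(j)} = \chi_j(k) \in K^{(j)}$. Thus the coordinates of $\nu \alpha$ are exactly the conjugates of $k \in K$, as required. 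The main obstacle throughout is the field-versus-product dichotomy in the third step: the ``no zero coordinates'' hypothesis on $L$ is exactly what rules out non-trivial idempotents and forces $K$ to be a field rather than a product, which is the slight twist beyond Shimizu's original argument.
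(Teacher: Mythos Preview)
Your argument is correct and self-contained. The paper does not actually supply a proof of this proposition; it merely remarks that ``a slight modification of the proof of Theorem~4 in \cite{shimizu}'' yields the statement. Shimizu's argument (for the original context of stabilisers of cusps) proceeds in the same spirit: one observes that each multiplier acts on the translation lattice by an integer matrix, takes the commutative ring these generate, and extracts the number field from it. Your packaging via $R\subset\mathrm{End}_\Z(L)$ and $K=R\otimes\Q$ is exactly this idea, cleanly reformulated.

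Two small remarks on presentation. First, when you deduce $\dim_\Q(R\otimes\Q)\le n$, it is worth making explicit that $R$ sits discretely inside the diagonal subalgebra $\R^n\subset M_n(\R)$ because $\mathrm{End}_\Z(L)$ is a lattice in $M_n(\R)$; this is what bounds the $\Z$-rank of $R$ by $n$ (rather than $n^2$). Second, the identification of the idempotents of $K$ with indicator vectors in $\R^n$ uses that each coordinate projection $\chi_k:K\to\R$ factors through exactly one simple factor $K_i$, which you are clearly using but could state. Both points are routine, and your crucial observation---that a non-trivial idempotent would produce a nonzero element of $L$ with a vanishing coordinate, contradicting the standing hypothesis on $L$---is precisely the ``slight modification'' of Shimizu's setting that the paper alludes to.
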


Let us fix the generators $\eps_1,\dots,\eps_{n-1}$ of the group $M$ and define
the matrices $\mathcal{E}_M$ and $\mathcal{E}_M^{-1}$ analogously as 
$\mathcal{E}(=\mathcal{E}_\kappa)$ and its 
inverse were  defined 
in Section \ref{Fourier_expansion_section}. Then 
the corresponding
Grössencharacter-type exponential sum $\lambda_{M,m}$ is given for every $m\in\Z^{n-1}$ 
in the same way as $\lambda^\kappa_m$ was in
(\ref{1_3_grossen_exp_sum}).

We are interested in the sum
\begin{align}\label{zeta_def}
Z_{L,M}(s,m):=\sum_{0\neq l\in L/M}\frac{\lambda_{M,-m}(\abs{l})}{\abs{Nl}^s},
\end{align}
where $s\in\C$, $m\in\Z^{n-1}$ and $\abs{l}$ is the vector whose coordinates are the absolute values 
of the corresponding coordinates of $l$. 
Since $\lambda_{M,-m}$ is a
multiplicative function and it is trivial on $M$, the function $Z_{L,M}(s,m)$ 
is well-defined 
and it will be called the zeta function belonging to the lattice $L$ and
the  group $M$.
For $L=\tk_\kappa$
and $M=\Lambda$ we simply write $Z_\kappa(s,m)$ and it will be called 
the \emph{zeta function of $\Gamma$ belonging to the cusp} $\kappa$.
Note that the index of $\lambda$ in (\ref{zeta_def}) is $-m$ 
in order to obtain the equivalent
form
\[
Z_{L,M}(s,m)=
\sum_{0\neq l\in L/M}\frac{1}{\abs{l_1}^{s_1}\dots \abs{l_n}^{s_n}},
\]
where $s_1,\dots,s_n$ are defined  as in (\ref{s_k_def}). 

We will show a few properties of these functions, they are summarized in
the following lemma:
\begin{lemma}\label{zeta_lemma}
The sum in (\ref{zeta_def}) converges absolutely and locally uniformly  for $\ree s>1$, hence
it defines an analytic function on this half-plane. It can be continued meromorphically
to the whole plane $\C$ and
 has no poles on $\C$ except for the case $m=0$ when $s=1$ and $s=0$ are the only 
 poles
 of $Z_{L,M}(s,0)$, they are simple and 
 \[
 \mathrm{Res}_{s=1}Z_{L,M}(s,0)= \frac{2^n\abs{\det \mathcal{E}_M}}{n\cdot\mathrm{vol}(\R^n/L) }.
 \] 
 Moreover, the completed function
\[
\Xi_{L,M}(s,m):=
\pi^{-\frac{ns}{2}}\left(\prod_{k=1}^n\Gamma\left(\frac{s_k}{2}\right)\right)Z_{L,M}(s,m)
\]
satisfies the functional equation
\[
\mathrm{vol}(\R^n/L)^{1/2}\Xi_{L,M}(s,m)
=\mathrm{vol}(\R^n/L^*)^{1/2}\Xi_{L^*,M}(1-s,-m)
\]
where $L^*$ is the dual lattice of $L$.
The convexity bound
\[
Z_{L,M}(s,m)\ll_{\eps,m}\abs{\im s}^{n(1-\ree s)/2+\eps}
\]
holds for any $\eps>0$ if $0\leq \ree s\leq 1$ and $\abs{\im s}$ is bounded from below by some positive constant.
Also, $Z_{L,M}(s,m)\ll_{\eps,m}\abs{\im s}^\eps$ holds if $\ree s>1$ and $\abs{\im s}$ is
bounded away from zero.
\end{lemma}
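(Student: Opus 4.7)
The approach is the classical Hecke theta-function method, adapted to the lattice and multiplier-group setup. Absolute convergence for $\ree s > 1$ is immediate: since $s_k - s$ is purely imaginary, $\bigl|\prod_k |l_k|^{-s_k}\bigr| = |Nl|^{-\ree s}$, and by Proposition \ref{zeta_prop} the resulting sum $\sum_{0\neq l\in L/M}|Nl|^{-\ree s}$ is (up to the rescaling by $\nu$) a partial Dedekind zeta function of a totally real number field and hence converges locally uniformly. To set up the meromorphic continuation, I would start from the Mellin identity
$$\pi^{-s_k/2}\Gamma(s_k/2)|l_k|^{-s_k}=\int_0^\infty t_k^{s_k/2}e^{-\pi t_kl_k^2}\,\frac{dt_k}{t_k}$$
applied to each factor, giving
$$\Xi_{L,M}(s,m)=\sum_{0\neq l\in L/M}\int_{(\R^+)^n}\prod_k t_k^{s_k/2}e^{-\pi t_kl_k^2}\,\frac{dt_k}{t_k}.$$

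Next I would reparametrize $(\R^+)^n$ by $(t,u)$ with $t=(\prod t_k)^{1/n}\in\R^+$ and $u\in V$, so that $t_k=te^{u_k}$; the Jacobian gives $\prod dt_k/t_k=\sqrt n\,dt/t\,dv$ with $dv$ the Lebesgue measure induced on $V$. Using (\ref{1_3_eps_eq}), one checks that $\prod t_k^{s_k/2}=t^{ns/2}\chi_m(u)$, where $\chi_m(u):=\exp\bigl(\pi i\sum_jm^{(j)}\sum_k(e_j)^{(k)}u_k\bigr)$ is a character of $V$ that is invariant under $u\mapsto u-2\log\eps$ for every $\eps\in M$ precisely because $m\in\Z^{n-1}$. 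The integrand is therefore invariant under the simultaneous substitution $(l,u)\mapsto(\eps l,u-2\log\eps)$, which lets me unfold the $L/M$-sum and arrive at
$$\Xi_{L,M}(s,m)=\sqrt n\int_0^\infty t^{ns/2}\!\!\int_{Q_M}\chi_m(u)\bigl(\Theta_L(t,u)-1\bigr)dv\,\frac{dt}{t},$$
where $Q_M\subset V$ is a fundamental domain for $2\log M$ and $\Theta_L(t,u):=\sum_{l\in L}e^{-\pi t\sum_ke^{u_k}l_k^2}$.

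The remaining steps are standard. Poisson summation yields the inversion $\Theta_L(t,u)=t^{-n/2}\mathrm{vol}(\R^n/L)^{-1}\Theta_{L^*}(1/t,-u)$, with $L^*$ being $M$-invariant by (\ref{dual_invar}). Splitting the $t$-integral at $1$ and applying the inversion to the piece on $(0,1]$ after $t\mapsto 1/t$ converts it into the corresponding expression for $(L^*,-m)$ at the argument $1-s$, giving simultaneously the meromorphic continuation and the functional equation. The ``$-1$'' terms from $\Theta_L-1$ produce elementary integrals $\int_1^\infty t^{n(1-s)/2}\,dt/t$ and $\int_1^\infty t^{-ns/2}\,dt/t$ multiplied by $\int_{Q_M}\chi_m(u)\,dv$; this last integral vanishes whenever $m\neq 0$ because $\chi_m$ descends to a nontrivial character of the torus $V/(2\log M)$, which is why no poles appear in that case. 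For $m=0$ one picks up simple poles at $s=0,1$; the residue at $s=1$ follows after checking that the Gamma factor of $\Xi_{L,M}$ at $s=1$ is $\pi^{-n/2}\Gamma(1/2)^n=1$ and computing $\mathrm{vol}(Q_M)=2^{n-1}|\det\mathcal E_M|/\sqrt n$ (obtained by replacing the first column $\mathbf 1$ of $\mathcal E_M$ by the unit normal $\mathbf 1/\sqrt n$ and accounting for the factor $2$ in the generators), which yields exactly $2^n|\det\mathcal E_M|/(n\,\mathrm{vol}(\R^n/L))$. For the convexity bound, absolute convergence handles $\ree s=1+\eps$, while on $\ree s=-\eps$ the functional equation combined with Stirling's formula — giving $\prod_k|\Gamma((1-s_k)/2)/\Gamma(s_k/2)|\asymp|\im s|^{n(1-2\ree s)/2}$ with implicit constants depending on $m$ — produces a bound of order $|\im s|^{n(1+2\eps)/2}$; Phragmén–Lindelöf on the strip then yields the stated convexity bound. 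The main obstacle is the bookkeeping in the unfolding step: one must align the coordinates on $(\R^+)^n$, the character $\chi_m$, the $M$-action on both $L$ and $L^*$, and the fundamental domain $Q_M$ so that everything transforms coherently through theta inversion and the ``completed'' functional equation has the required symmetric form.
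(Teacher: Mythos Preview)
Your proposal is correct and follows essentially the same approach as the paper: both use the Hecke theta-function method (Mellin representation, unfolding to a theta integral over a fundamental domain for the $M^2$-action on $(\R^+)^n$, Poisson summation, splitting at norm $1$, then Phragm\'en--Lindel\"of for convexity). The only difference is cosmetic: the paper keeps the coordinates $\mathbf{x}\in(\R^+)^n/M^2$ and splits according to $N\mathbf{x}\lessgtr 1$, while you first pass to the explicit product decomposition $(t,u)\in\R^+\times V/(2\log M)$ and split at $t=1$; these are the same integral in different coordinates, and your volume computation $\mathrm{vol}(Q_M)=2^{n-1}|\det\mathcal{E}_M|/\sqrt{n}$ matches the paper's $2^{n-1}|\det\mathcal{E}_M|$ after accounting for the Jacobian factor $\sqrt{n}$ you carry outside.
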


\subsection{Contribution of totally parabolic classes}

At last we  give the contribution of the totally parabolic classes in the geometric
trace. In advance of that we introduce some notations.
If $\kappa$ is a cusp of $\Gamma$ and $\sigma_\kappa$ is 
the corresponding scaling element, then the 
zeroth Fourier coefficient of
$u(\sigma_\kappa z)$ is
$a_\kappa(y)=\eta_\kappa y_1^{s_1}\dots y_n^{s_n}+
\phi_\kappa y_1^{1-s_1}\dots y_n^{1-s_n}$.
Recall that if at least one of $\eta_\kappa$ and $\phi_\kappa$ is non-zero
(i.e. when $u$ does not vanish at $\kappa$)  then 
the zeroth coefficient 
$a_\kappa(y)$
can be written in the form
$\eta_\kappa (y_1\dots y_n)^s\lambda_{m_{u}}(y)+
\phi_\kappa(y_1\dots y_n)^{1-s}\lambda_{-m_{u}}(y)$, where 
$s=\frac{s_1+\dots +s_n}{n}$ and
$m_{u}\in\Z^{n-1}$. If $\eta_\kappa=\phi_\kappa=0$ for all $\kappa$, then $m_u=0$
holds by definition. Note that in (\ref{1_3_grossen_exp_sum}) 
the function $\lambda_{m}=\lambda^\kappa_{m}$ was 
defined  in terms of the entries of $\mathcal{E}_\kappa=\mathcal{E}$ and hence by Proposition
\ref{multi_group_prop} and its subsequent paragraph $\lambda_m$ is independent of the cusp that (from now on) will not be included in
our notation.
Now we are in the position to give $\Sigma_\mathrm{par}$ explicitly:
\begin{thm}\label{parabolic_thm}
The contribution
of the totally parabolic classes in the truncated trace, i.e., the value
of $\Sigma_{\mathrm{par}}$ is
\begin{align*}
&\sum_{\kappa\in\mathcal{S}}
 \delta_{m_{u}}\frac{\abs{\det \mathcal{E}}}{n}
 \left(
 \frac{\eta_\kappa A^{s}}{s}
+
 \frac{\phi_\kappa A^{1-s}}{1-s}
\right)g\left(0\right)
\\[1mm]
&\qquad\qquad\qquad\qquad\qquad
+\mathrm{vol}(\R^n/\tk_\kappa)(\eta_\kappa Z_\kappa(1-s,-m_{u})F(0)+
\phi_\kappa Z_\kappa(s,m_{u})\tilde{F}(0))
+o(1)
 \end{align*}
as $A\to\infty$, 
where
\begin{align*}
F(S)=
\int\limits_{(\R^+)^n}
\psi\left(t^2\right)
\prod_{k=1}^nt_k^{S-s_k}
\,dt_k\quad\textrm{and}\quad
\tilde{F}(S)=
\int\limits_{(\R^+)^n}
\psi\left(t^2\right)
\prod_{k=1}^n
t_k^{S+s_k-1}
\,dt_k.
\end{align*}
The values $F(0)$ and $\tilde{F}(0)$ can be expressed in terms of the function
$h$ defined in (\ref{2_1_Q_g_h_def}):
\[
F(0) = 
\left(\frac{i}{2^{2-s}\pi^2}\right)^n
\left(
    \prod_{k=1}^n
        \Gamma\left(\frac{1-s_k}{2}\right)^2
\right)
\int\limits_{(\R^+)^n}
h(r)\prod_{k=1}^n 
\frac{\Gamma(\frac{s_k}{2}+ir_k)}{\Gamma(\frac{2-s_k}{2}+ir_k)}
r_k\,dr_k,
\]
and
\[
\tilde{F}(0) = 
\left(\frac{i}{2^{s+1}\pi^2}\right)^n
\left(
    \prod_{k=1}^n
        \Gamma\left(\frac{s_k}{2}\right)^2
\right)
\int\limits_{(\R^+)^n}
h(r)\prod_{k=1}^n 
\frac{\Gamma(\frac{1-s_k}{2}+ir_k)}{\Gamma(\frac{s_k+1}{2}+ir_k)}
r_k\,dr_k.
\]
\end{thm}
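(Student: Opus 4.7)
The strategy parallels the proof of Theorem \ref{hyp-par_thm} but exploits the fact that a totally parabolic element has only one boundary fixed point rather than two. The principal obstacle is the interchange of summation over conjugacy classes with the $y$-integration: each class gives a finite truncated integral, but the naive sum over classes diverges, so I will use a Poisson summation argument on the lattice $\tk_\kappa$, which simultaneously produces the main term $M(A)$ with its characteristic $g(0)$ prefactor.

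Every totally parabolic element fixes a unique boundary point, which must be a cusp. Choosing $\kappa\in\mathcal{S}$ as a representative of its $\Gamma$-orbit, every totally parabolic class with fixed point in this orbit has a representative $\gamma_{\kappa,b}=\sigma_\kappa\mtx{1}{b}{0}{1}\sigma_\kappa^{-1}$ with $b\in\tk_\kappa\setminus\{0\}$, and the conjugation identity $\gamma\mtx{1}{b}{0}{1}\gamma^{-1}=\mtx{1}{a^2b}{0}{1}$ for $\gamma=\mtx{a}{c}{0}{a^{-1}}\in\sigma_\kappa^{-1}\Gamma_\kappa\sigma_\kappa$ identifies the $\Gamma_\kappa$-conjugacy action on translations with the multiplicative $\Lambda$-action, so the classes are in bijection with $(\tk_\kappa\setminus\{0\})/\Lambda$. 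A similar direct computation shows that the centralizer of $\gamma_{\kappa,b}$ in $\Gamma$ is the full translation subgroup $\sigma_\kappa\{\mtx{1}{a}{0}{1}:a\in\tk_\kappa\}\sigma_\kappa^{-1}$, since commutation forces $(a-a^{-1})b=0$ and the non-vanishing of the coordinates of $b$ (which follows from the totally-real structure guaranteed by Theorem \ref{1_3_commensurable_thm}) then yields $a=1$ in $PSL$. Applying (\ref{tgamma_final_form}) with $\varrho=\sigma_\kappa$ unfolds each class integral onto $P_\kappa\times(\R^+)^n$; repackaging the sum over $[b]\in(\tk_\kappa\setminus 0)/\Lambda$ as a sum over $b\in\tk_\kappa\setminus\{0\}$ integrated over the smaller $\Gamma_\kappa$-fundamental domain (using the $\Lambda$-invariance $a_\kappa(\eps y)=a_\kappa(y)$, which follows from (\ref{1_3_eps_eq}) together with $N\eps=1$), and performing the $x$-integration over $P_\kappa$ to collapse the Fourier expansion (\ref{fourier_expansion_of_u}) to its zeroth coefficient, the truncated parabolic contribution becomes
\[
\Sigma_{\mathrm{par}}^A=\sum_{\kappa\in\mathcal{S}}\mathrm{vol}(\R^n/\tk_\kappa)\int_{F_\Lambda\cap\{Ny\le A\}}a_\kappa(y)\!\sum_{b\in\tk_\kappa\setminus\{0\}}\!\psi(b^2/y^2)\,\frac{dy}{y^2}+o(A),
\]
where $F_\Lambda=\{y\in(\R^+)^n:\log(y/(Ny)^{1/n})\in Q_\kappa\}$ and the $o(A)$ absorbs the contributions of $F_0$ and of boundary pieces near other cusps, bounded via Proposition \ref{u_bound_prop}.

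To extract $M(A)$ I apply Poisson summation on $\tk_\kappa$. The $\xi=0$ Fourier coefficient of $b\mapsto\psi(b^2/y^2)$ equals $\int_{\R^n}\psi(b^2/y^2)\,db=Ny\cdot Q(0)=Ny\cdot g(0)$ (from the substitution $b=yt$ and the identity $Q(0)=g(0)$), while the $\xi\ne 0$ coefficients together contribute a remainder $R(y)$ that is rapidly decreasing in $Ny$. Writing $\sum_{b\ne 0}\psi(b^2/y^2)=\frac{Ny\cdot g(0)}{\mathrm{vol}(\R^n/\tk_\kappa)}-\psi(0)+R(y)$ and passing to cusp coordinates $(Y_0,\dots,Y_{n-1})$ via $Ny\,dy/y^2=\frac{|\det\mathcal{E}|}{n}\,\frac{dY_0}{Y_0}\,dY_1\cdots dY_{n-1}$, together with $a_\kappa(y)=\eta_\kappa Y_0^s\lambda_{m_u}(y)+\phi_\kappa Y_0^{1-s}\lambda_{-m_u}(y)$, the leading Poisson piece produces
\[
\sum_{\kappa\in\mathcal{S}}\frac{|\det\mathcal{E}|}{n}\,g(0)\,\delta_{m_u}\!\left(\frac{\eta_\kappa A^s}{s}+\frac{\phi_\kappa A^{1-s}}{1-s}\right)+o(1),
\]
with $\delta_{m_u}$ coming from $\int_{[0,1)^{n-1}}\lambda_{\pm m_u}\,dY_1\cdots dY_{n-1}$. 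This is exactly $M(A)$.

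The convergent part is obtained most transparently by returning to the per-class formula. The coordinate-wise substitution $y_k=|b_k|/t_k$ gives
\[
T_{\gamma_{\kappa,b}}=\mathrm{vol}(\R^n/\tk_\kappa)\Bigl[\eta_\kappa F(0)\prod_{k=1}^n|b_k|^{s_k-1}+\phi_\kappa\tilde F(0)\prod_{k=1}^n|b_k|^{-s_k}\Bigr],
\]
and summing over $[b]\in(\tk_\kappa\setminus 0)/\Lambda$ reassembles, by the equivalent form of $Z_{L,M}$ noted after (\ref{zeta_def}), into $\mathrm{vol}(\R^n/\tk_\kappa)\bigl(\eta_\kappa Z_\kappa(1-s,-m_u)F(0)+\phi_\kappa Z_\kappa(s,m_u)\tilde F(0)\bigr)$. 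The formal divergence of this Dirichlet-series sum in the strip $0<\ree s<1$ is reconciled with the $R(y)-\psi(0)$ Poisson remainder via analytic continuation, using the properties of $Z_{L,M}$ established in Lemma \ref{zeta_lemma}. Finally, to recast $F(0)$ and $\tilde F(0)$ as integrals against $h$, I chain the inverse relations (\ref{2_1_transform_rules}): write $\psi$ in terms of $Q$, $Q$ in terms of $g$, and $g$ as the Fourier inversion of $h$; the resulting one-dimensional Mellin integrals in each coordinate evaluate via the Beta function to the displayed Gamma-quotient formulae.
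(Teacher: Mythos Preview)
Your approach differs from the paper's in a substantive way, and the difference exposes a real gap.

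For the main term you use Poisson summation on $\tk_\kappa$ in the $b$-variable, picking up the $\xi=0$ dual term $Ny\cdot g(0)/\mathrm{vol}(\R^n/\tk_\kappa)$ and integrating in cusp coordinates to produce the $\delta_{m_u}A^s$ and $A^{1-s}$ pieces. The paper instead keeps the truncation condition $|N\alpha|\le Au_1\cdots u_n$ after the substitution $u_k=|\alpha_k|/y_k$, applies Perron's formula (Montgomery--Vaughan) to the partial sum, and shifts the contour: the main term then appears as the residue of $Z_\kappa(1-s+S,-m_u)A^S/S$ at $S=s$ coming from the pole of $Z_\kappa$ at $1$, while the convergent part is the residue at $S=0$. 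Both routes are legitimate for the main term; your Poisson extraction is a clean alternative.

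The gap is in your treatment of the convergent part. You ``return to the per-class formula,'' compute the \emph{untruncated} integral $T_{\gamma_{\kappa,b}}$, and then sum over $[b]\in(\tk_\kappa\setminus 0)/\Lambda$. That sum is $\sum_b\prod_k|b_k|^{s_k-1}$ (resp.\ $|b_k|^{-s_k}$), which diverges in the strip $0<\ree s<1$, as you acknowledge. You then say this is ``reconciled with the $R(y)-\psi(0)$ Poisson remainder via analytic continuation.'' But $s$ is not a free variable here---it is fixed by the Laplace eigenvalues of the automorphic form $u$---so there is no parameter in which to continue. To make your argument rigorous you would have to either (i) compute the convergent dual sum $\int a_\kappa(y)R(y)\,dy/y^2=\sum_{\xi\in\tk_\kappa^*\setminus 0}(\ldots)$ directly, recognize it as a value of $Z_{\tk_\kappa^*,M}$, and invoke the functional equation of Lemma \ref{zeta_lemma}; or (ii) introduce an auxiliary Mellin variable replacing the exponent $s$ in $a_\kappa(y)$, establish absolute convergence and analyticity on both sides in a half-plane, and continue. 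Neither step is carried out. The paper's Perron-plus-contour-shift argument avoids this entirely: it never lets $A\to\infty$ term by term, and both the divergent main part and the finite zeta part fall out as residues in a single rigorous manipulation with explicit error control. The final evaluation of $F(0)$ and $\tilde F(0)$ via the chain (\ref{2_1_transform_rules}) matches the paper.
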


\section{Proofs of the theorems}

\subsection{Proof in the totally elliptic case}

We first prove  Theorem \ref{elliptic_main_thm}.
As it was already mentioned, there are only finitely many elliptic conjugacy classes by Corollary $2.14_1$ in \cite{freitag}),
hence it remains to show the formula for $T_\gamma$ (defined in (\ref{tgamma_def})). 
We prove by induction.
Since our argument is very similar to the one in \cite{biro1},  we only sketch the induction step.

Let $\gamma\in\Gamma$ be a totally elliptic element with the elliptic fixed point $z_\gamma\in \HH^n$.
The centralizer $C(\gamma)$ consists of the elements in $\Gamma$ which leave  the point $z_\gamma$ fixed
(see \cite{shimizu}, p. 37) and the stabilizer $\Gamma_{z_\gamma}$ of $z_\gamma$ in $\Gamma$ is a finite cyclic group
(see Remark 2.14 in \cite{freitag}).
Let us denote by $m_\gamma$ the order of $C(\gamma)$.
Every elliptic element in $PSL(2,\R)$ is conjugate to an element of the form  
$\mtx{\cos\theta}{\sin\theta}{-\sin \theta}{\cos\theta}$, hence the generator $\gamma_0$ of $C(\gamma)$ can be chosen so that it is conjugate in $PSL(2,\R)^n$ to the element
\[
\gamma'_0 = 
\left(	
\mtxx{\cos\ds{\frac{\pi}{m_\gamma}}}{\sin\ds{\frac{\pi}{m_\gamma}}}{-\sin\ds{\frac{\pi}{m_\gamma}}}{\cos\ds{\frac{\pi}{m_\gamma}}   }{3},
\mtxx{\cos\ds{\frac{l_2\pi}{m_\gamma}}}{\sin\ds{\frac{l_2\pi}{m_\gamma}}}{-\sin\ds{\frac{l_2\pi}{m_\gamma}}}{\cos\ds{\frac{l_2\pi}{m_\gamma}}}{3},
\dots,
\mtxx{\cos\ds{\frac{l_n\pi}{m_\gamma}}}{\sin\ds{\frac{l_n\pi}{m_\gamma}}}{-\sin\ds{\frac{l_n\pi}{m_\gamma}}}{\cos\ds{\frac{l_n\pi}{m_\gamma}}}{3}
\right),
\]
where $l_k\in\Z$ with $\gcd(l_k,m_\gamma)=1$ for every $k=2,\dots,n$.
Let us write $\gamma'=\varrho^{-1}\gamma\varrho$,
we give the fundamental domain $F_{C(\gamma')}$ of $C(\gamma')=\left<\gamma'_0\right>\leq \varrho^{-1}\Gamma\varrho$.
The first coordinate of $\gamma'_0$ 
is a rotation around the point $i\in\HH$ by the angle $2\pi/m_\gamma$, therefore 
every $C(\gamma')$-orbit has
exactly one point in the set
$F_0\times \HH^{n-1}$, where $F_0\subset \HH$ is a sector enclosed by two half-lines with endpoint $i$ 
and angle $2\pi/m_\gamma$. Note that in fact each coordinate is a rotation around $i$ which means that $\varrho$
takes the point $(i,\dots,i)$ to the fixed point $z_\gamma$ of $\gamma$. 
Now by (\ref{tgamma_final_form}) we have
\[
T_\gamma=\int_{F_{C(\gamma')}}k(z,\gamma' z)u(\varrho z)\,d\mu(z)=\frac{1}{m_\gamma}\int_{\HH^n}k(z,\gamma' z)u(\varrho z)\,d\mu(z),
\]
where we used the $PSL(2,\R)^n$-invariance of the function $k$ and the measure $\mu$, the $\Gamma$-invariance of $u$ and that $\gamma'$ and
$\gamma'_0$ commute. Writing $z=(z_1,\dots,z_n)$ we have
\begin{equation}\label{ellitptic_multiple_integral}
\int_{\HH^n}k(z,\gamma' z)u(\varrho z)\,d\mu(z)=
\int_\HH\dots\int_\HH k(z,\gamma'z)u(\varrho z)\,d\mu(z_1)\dots\, d\mu(z_n),
\end{equation}
where $\mu(z_k)$ denotes the measure $y_k^{-2}dx_k\,dy_k$.
In the inner integral above the coordinates $z_2,\dots,z_n$ are fixed,
and the function $u(\varrho z)$ can be regarded as
a function of $z_1$. It is the eigenfunction of
the Laplace operator $\Delta_1$ (because the operator commutes with the group action), furthermore, 
the value of $k(z,\gamma' z)$ depends only on the hyperbolic distance of $z_1$ and $\gamma'^{(1)}z_1$.
To simplify the notation we 
write $u(\varrho z) = u_{1}(z_1)$ and 
$k(z,w) = k_{1}(z_1,w_1)$.
Furthermore, as $\gamma'$ is fixed we can simply write
$k(z,\gamma' z) = k_{1}(z_1,\gamma'^{(1)}z_1)$.
With this notation the inner integral becomes
\[
T_{1}:=\int_\HH k(z,\gamma'z)u(\varrho z)\,d\mu(z_1)=\int_{\HH}k_{1}(z_1,\gamma'^{(1)}z_1)u_{1}(z_1)\,d\mu(z_1).
\]

Recall that in (\ref{elll_R_def}) we introduced the notation
\[
R(\alpha)=\mtx{\cos\alpha}{\sin\alpha}{-\sin\alpha}{\cos\alpha},
\]
for any $\alpha\in\R$.
For a vector
$\varphi=(\varphi^{(1)},\dots,\varphi^{(n)})\in\R^n$ we set 
$R(\varphi)=(R(\varphi^{(1)}),\dots, R(\varphi^{(n)}))$. 
The elements of the centralizer $C(\gamma')$ are of the form $R(\varphi)$, in particular
$\gamma'=R(\theta_\gamma)$ for some $\theta_\gamma=(\theta_\gamma^{(1)},\dots,\theta_\gamma^{(n)})$ 
where $\theta_\gamma^{(k)}\in[0,\pi)$.
As it was mentioned in Section \ref{ell_mix_sec} it is not hard to see that
the vector $\theta_\gamma$ is determined by 
the class of $\gamma$, i.e. it is independent of
the choice of $\varrho$ (at least if every coordinate is chosen from the interval $[0,\pi)$).
Since $\gamma'\in C(\gamma')$ we have in fact $\theta_\gamma^{(k)}=l_k\pi /m_\gamma$ for some integer 
$0<l_k<m_\gamma$ ($k=1,\dots,n$).

Next we use geodesic polar coordinates (see \cite{iwaniec}, section 1.3), i.e. we make
the substitution 
$z_1=R(\varphi_1)e^{-r_1}i$ 
where $r_1\in(0,\infty)$ is the hyperbolic distance of $i$ and $z_1$ and $\varphi_1\in[0,\pi)$. 
Then we have
$d\mu(z_1) = (2\sinh r_1)\,dr_1\,d\varphi_1$
and 
\[
T_{1}= 
\int_0^\infty\int_0^\pi k_{1}(R(\varphi_1)e^{-r_1}i,R(\theta_\gamma^{(1)})R(\varphi_1)e^{-r_1}i))
u_{1}(R(\varphi_1)e^{-r_1}i)2\sinh r_1\,d\varphi_1\,dr_1.
\]
As the elements $R(\theta_\gamma^{(1)})$ and $R(\varphi_1)$ commute and $k_{1}$ depends only on the hyperbolic distance of 
the variables, we get that
\[
T_{1}=\int_0^\infty k_{1}(e^{-r_1}i,R(\theta_\gamma^{(1)})e^{-r_1}i))
\left(\int_0^\pi  u_{1}(R(\varphi_1)e^{-r_1}i)\,d\varphi_1\right)(2\sinh  r_1)\,dr_1.
\]

Recall that
$k_{1}(z_1,w_1)=\psi\left(\rho(z_1,w_1), \dots,\rho(z_n,w_n)\right)=:
\psi_{1}(\rho(z_1,w_1))$,
where 
\[
\rho(z_k,w_k)=\frac{\abs{z_k-w_k}^2}{\im z_k\, \im w_k}
\]
for $k=1,\dots,n$.
One gets by a computation that
\begin{align*}
\rho(e^{-r_k}i,R(\theta_\gamma^{(k)})e^{-r_k}i)=
\frac{\abs{-e^{-2r_k}+1}^2\sin^2\theta_\gamma^{(k)}}{e^{-2r_k}}=
(2\sinh r_k\sin\theta_\gamma^{(k)})^2,
\end{align*}
hence
\begin{equation}\label{T1}
T_{1}=\int_0^\infty \psi_{1}((2\sinh r_1\sin\theta_\gamma^{(1)})^2)
\left(\int_0^\pi  u_{1}(R(\varphi_1)e^{-r_1}i)\,d\varphi_1\right)(2\sinh  r_1)\,dr_1.
\end{equation}
Let us define the function 
\[
G_{1}(w)=\frac{1}{\pi}\int_0^\pi u_{1}(R(\varphi_1)w)\,d\varphi_1,
\]
where $w\in\HH$.
By Lemma 1.10 in \cite{iwaniec} the value of $G_{1}$ depends only on the hyperbolic distance $r$ of  $w$ and $i$.
Moreover, $G_{1}$ is the eigenfunction of the (one dimensional) Laplace operator $\Delta$ with eigenvalue $\lambda_1$, where
$\lambda_1$ is the first coordinate of the eigenvalue vector of $u$. 
Now by Lemma 1.12 of \cite{iwaniec} this function is unique up to a constant factor.
Furthermore
\[
\Delta=
\frac{\partial^2}{\partial r^2}+
\frac{\cosh r}{\sinh r}\frac{\partial}{\partial r}+
\frac{1}{4\sinh^2 r}\frac{\partial^2}{\partial \varphi^2}
\]
hence the function $G_{1}$ (as a function of $r$) satisfies the differential equation
(\ref{elliptic_diff_eq}) with the constant $\lambda=\lambda_1$, and consequently
\[
G_{1}(w)
=g_{\lambda_1}(r)u_{1}(i)
=g_{\lambda_1}(r)u(\varrho^{(1)} i,\varrho^{(2)} z_2,\dots,\varrho^{(n)}z_n),
\]
where $g_{\lambda_1}(r):[0,\infty)\to\C$ is the solution of (\ref{elliptic_diff_eq}) 
with $\lambda=\lambda_1$
satisfying the initial condition $g_{\lambda_1}(0)=1$.
By substituting this in (\ref{T1}), then interchanging the integrals in 
(\ref{ellitptic_multiple_integral}) and proceeding by induction one gets the statement 
of the theorem.

\subsection{Proof in the mixed and totally hyperbolic cases}

We continue with the proof of Theorem \ref{mixed_main_thm}. 
Let $\gamma\in\Gamma$ a mixed or a totally hyperbolic element. 
We assume that the first $1\leq m\leq n$ coordinate
of $\gamma$ are hyperbolic while the following $n-m$ coordinates are elliptic, the proof of
the statement is similar in the other cases.
We have seen in Section \ref{ell_mix_sec} that such an element is conjugated 
by an element $\varrho\in PSL(2,\R)^n$ to an element of the form
$\nu =\left(
    D(N_\gamma^{(1)}),\dots,D(N_\gamma^{(m)}),R(\theta_\gamma^{(m+1)}),\dots,R(\theta_\gamma^{(n)})
\right)$
for some $N_\gamma^{(k)}>1$ and $\theta_\gamma^{(l)}\in[0;2\pi)$,
where $D(N)$ and  $R(\theta)$ were defined in (\ref{mix_D_def}) and (\ref{elll_R_def}), respectively.
An easy computation shows
that these numbers are uniquely defined by the class
$\{\gamma\}$.

As in the totally elliptic case, by (\ref{tgamma_final_form}) we need to consider the 
integral
\[
T_\gamma = \int_{F_{C(\nu)}}k(z,\nu z)u(\varrho z)\,d\mu(z),
\]
where $F_{C(\nu)}$ is the fundamental domain for the centralizer $C(\nu)\leq \varrho^{-1}\Gamma\varrho$ of 
$\nu=\varrho^{-1}\gamma\varrho$.
This domain was described in Proposition \ref{totally_hyp_fundom_prop} 
and the notations introduced there will be used in the following.

For the first $m$ coordinates of $z$  we change to polar coordinates, i.e. make the substitution $z_k=r_ke^{i(\pi/2+\tht_k)}$ where $r_k\in(0,\infty)$ 
and $\tht_k\in(-\frac{\pi}{2},\frac{\pi}{2})$ ($k=1,\dots, m$),
while for the last $n-m$ coordinates we change to geodesic polar coordinates
as in the previous proof.
A simple computation shows that $\rho(z_k,\nu^{(k)} z_k) =N(\vartheta_k,\gamma^{(k)})$
for $k=1,\dots,m$ (where $N(\vartheta_k,\gamma^{(k)})$ was defined in (\ref{mixed_N_def})),
and by
$y_k^{-2}\,dx_k\,dy_k = (r_k\cos^2\tht_k)^{-1}\,dr_k\,d\tht_k$ and
the results of the previous proof we
obtain that $T_\gamma$ is
\begin{align*}
(2\pi)^{n-m}
&\int\limits_0^\infty\dots\int\limits_0^\infty
\int\limits_{-\frac{\pi}{2}}^{\frac{\pi}{2}}
\dots
\int\limits_{-\frac{\pi}{2}}^{\frac{\pi}{2}}
\psi(N(\vartheta_1,\gamma^{(1)}),\dots,
N(\vartheta_m,\gamma^{(m)}),
S(r_{m+1},\theta_{\gamma}^{(m+1)}),\dots,S(r_n,\theta_\gamma^{(n)}))\\
&\qquad\,\,\times
F(e^{i(\frac{\pi}{2}+\tht_1)},\dots,e^{i(\frac{\pi}{2}+\tht_m)})
\,\frac{d\tht_1}{\cos^2\tht_1}\dots \frac{d\tht_m}{\cos^2\tht_m}
\left(\prod_{k=m+1}^ng_{\lambda_k}(r_k)\sinh r_k\,dr_k\right),
\end{align*}
where $S(r,\theta)=(2\sinh r\sin\theta)^2$ and
\[
F(z)=\int\limits_{(\log r_1,\dots,\log r_m)\in P_\gamma}
u(\varrho^{(1)} (r_1z_1),\dots,\varrho^{(m)} (r_mz_m),
\varrho^{(m+1)}i,\dots,\varrho^{(n)}i)
\prod_{k=1}^m\frac{dr_k}{r_k}
\]
for any $z\in\HH^m$. Since $u(\varrho z)$ is invariant under the action of the centralizer 
$C(\nu)$, one
sees easily that the function $F$ is invariant under is coordinate-wise scalar multiplication, i.e. 
$F(R_1z_1,\dots,R_mz_m)=F(z_1,\dots,z_m)$ holds
for any $R_1, \dots,R_m\in(0,\infty)$ and 
$z_1,\dots,z_m\in\HH$.
This means that
$F$  depends only on the vector 
$(\vartheta_1,\dots,\vartheta_m)$ (where $z_k=r_ke^{i(\frac{\pi}{2}+ \vartheta_k)}$).

Moreover, since $u$ is the eigenfunction of every $\Delta_k$ with eigenvalue $\lambda_k$  
and these operators commute with the group
action, we infer that $F(z)$ is also an eigenfunction of the Laplacians $\Delta_1,\dots,\Delta_m$ with the same corresponding eigenvalues. 
As
\[
\Delta_k = (r_k\cos\tht_k)^2
\left(
\frac{\partial^2}{\partial r_k^2}
+
r_k^{-1}\frac{\partial}{\partial r_k}
+
r_k^{-2}\frac{\partial^2}{\partial\tht_k^2}
\right),
\]
we obtain the differential equations
\begin{equation}\label{diff_eq_0}
\frac{\partial^2 F}{\partial\tht_k^2}(\tht_1,\dots,\tht_m) = \frac{\lambda_k}{\cos^2\tht_k}F(\tht_1,\dots,\tht_m)\qquad
(\tht_k\in(-\pi/2,\pi/2),\,k=1,\dots,m).
\end{equation}
Let $f_{\lambda_k}(\tht)$ be the unique solution of the differential equation (\ref{mixed_diff_eq})
with $\lambda = \lambda_k$ and 
the initial conditions $f_{\lambda_k}(0) = 1$ and $f'_{\lambda_k}(0) = 0$, and
$\tilde{f}_{\lambda_k}(\tht)$ the one with $\tilde{f}_{\lambda_k}(0) =0$ and 
$\tilde{f}'_{\lambda_k}(0) =1$. Note that $f_{\lambda_k}(-\tht)$ satisfies (\ref{mixed_diff_eq})
and the initial conditions of $f_{\lambda_k}(\tht)$ and hence they agree, i.e. $f_{\lambda_k}$
is an even function. Similarly, $\tilde{f}_{\lambda_k}$ is an odd function.

The equation  
\[
F(\tht_1,\dots,\tht_m) = F(0,\tht_2,\dots,\tht_m)f_{\lambda_1}(\tht_1)+\frac{\partial F}{\partial \tht_1}(0,\tht_2,\dots,\tht_m)\tilde{f}_{\lambda_1}(\tht_1)
\]
holds by (\ref{diff_eq_0}) 
for every fixed $\tht_2,\dots,\tht_m$, 
hence the inner integral in $T_\gamma$ is
\begingroup
\allowdisplaybreaks
\begin{align*}
&\int\limits_{-\frac{\pi}{2}}^{\frac{\pi}{2}}
\psi(N(\vartheta_1,\gamma^{(1)}),\dots,
N(\vartheta_m,\gamma^{(m)}),
S(r_{m+1},\theta_\gamma^{(m+1)}),\dots,S(r_n,\theta_\gamma^{(n)}))\\ &\qquad\qquad\qquad\qquad\qquad\times
\left(
    F(0,\tht_2,\dots,\tht_m)f_{\lambda_1}(\tht_1)
    +
    \frac{\partial F}{\partial \tht_1}(0,\tht_2,\dots,\tht_m)
    \tilde{f}_{\lambda_1}(\tht_1)
\right)
\frac{d\tht_1}{\cos^2\tht_1}\\
&=\int\limits_{-\frac{\pi}{2}}^{\frac{\pi}{2}}
\psi(N(\vartheta_1,\gamma^{(1)}),\dots,
N(\vartheta_m,\gamma^{(m)}),
S(r_{m+1},\theta_\gamma^{(m+1)}),\dots,S(r_n,\theta_\gamma^{(n)}))\\ 
&\qquad\qquad\qquad\qquad
\qquad\qquad\qquad\qquad\qquad\qquad\qquad\times
    F(0,\tht_2,\dots,\tht_m)f_{\lambda_1}(\tht_1)
    \frac{d\tht_1}{\cos^2\tht_1}
\end{align*}
\endgroup
because $N(\tht_1,\gamma^{(1)})$ and $\cos^{-2}(\tht_1)$ are even and  $\tilde{f}_{\lambda_1}(\tht_1)$ is
an odd function. Then, by induction 
(using also that $f_{\lambda_k}$ is even) we infer
\begin{align*}
T_\gamma=&(2\pi)^{n-m}F(0,\dots,0)\times\\
&\times\int\limits_0^\infty\dots\int\limits_0^\infty
\int\limits_{-\frac{\pi}{2}}^{\frac{\pi}{2}}
\dots
\int\limits_{-\frac{\pi}{2}}^{\frac{\pi}{2}}
\psi(N(\vartheta_1,\gamma^{(1)}),\dots,
N(\vartheta_m,\gamma^{(m)}),
S(r_{m+1},\theta_\gamma^{(m+1)}),\dots,S(r_n,\theta_\gamma^{(n)}))\\
&\qquad\qquad\qquad\qquad\qquad\times
\left(\prod_{k=1}^mf_{\lambda_k}(\tht_k)\,\frac{d\tht_k}{\cos^2\tht_k}\right)
\left(\prod_{k=m+1}^ng_{\lambda_k}(r_k)\sinh r_k)\,dr_{k}\right),
\end{align*}
where
\[
F(0,\dots,0) = 
\int\limits_{(\log r_1,\dots,\log r_m)\in P_\gamma}u(\varrho^{(1)}(r_1i),\dots, \varrho^{(2)}(r_mi),
\varrho^{(m+1)}i,\dots,\varrho^{(n)}i)
\prod_{k=1}^m\frac{dr_k}{r_k}.
\]

It remains to show that there are only finitely many mixed or
totally hyperbolic equivalence classes for which 
$T_\gamma$  is non-zero.
Note that since $\psi$ has compact support and
\[
 N(\vartheta_k,\gamma^{(k)})=
 \frac{N_\gamma^{(k)}+(N_\gamma^{(k)})^{-1}-2}{\cos^2\tht_k}\geq 
 N_\gamma^{(k)}+(N_\gamma^{(k)})^{-1}-2 = \abs{\tr [\gamma^{(k)}]}-2,
\]
 we get $T_\gamma = 0$ once $\abs{\tr[\gamma^{(k)}]}$ is big enough for some $k$.
Hence it is enough to show that there are only finitely many classes whose representatives
have
hyperbolic coordinates of bounded  norm (and trace).

By Theorem \ref{1_3_commensurable_thm} $\Gamma$ is commensurable with a Hilbert modular group $\Gamma_K$,
so there is an $M_\gamma\in\N^+$ such that $\gamma^{M_\gamma}$ is conjugate to an element in 
$\Gamma_K$, moreover,
the  exponent $M_\gamma$ is bounded by a constant depending on $\Gamma$.
It follows that it is enough to show that there are only finitely many 
mixed or totally hyperbolic classes in $\Gamma_K$
with coordinates of bounded trace.

Note that the trace of $(\gamma^{M_\gamma})^{(k)}$  is in $\OO_{K^{(k)}}$ for every $1\leq k\leq n$,
and these values are conjugates of each other. Hence if each of them is bounded,
then the norm of them is bounded as well, so there are finitely many possibilities for the values
of these traces.
Finally, by Proposition I.7.1 and the paragraph after
Definition I.7.2 in  \cite{efrat}, there are only finitely many totally hyperbolic conjugacy classes with
fixed traces, and this completes the proof of Theorem \ref{mixed_main_thm}.

\subsection{Proof in the hyperbolic-parabolic case}

In this section we prove Theorem \ref{hyp-par_thm}. In the course of
the following proof the statement of Proposition \ref{multi_group_prop} will also be 
verified, but until that point of our argument we  always indicate any possible 
or evident  dependence on a cusp. Accordingly, we temporarily use the notations $\Lambda_\kappa$ for
the multiplier group and $\eps_j^\kappa$ for its generators.

Recall that every hyperbolic-parabolic class is represented 
by an element that fixes a cusp $\kappa\in\mathcal{S}$
and such an element is conjugated by the scaling element 
$\sigma_\kappa\in \psl$ to an element of the form 
\begin{equation}\label{hyp-par_conj_form}
\gamma_{m,\alpha}^\kappa= 
\mtx{(u_m^\kappa)^{1/2}}{\alpha (u_{m}^\kappa)^{-1/2}}{0}{(u_{m}^\kappa)^{-1/2}},
\end{equation}
where $\alpha\in\tk_\kappa$, $m=(m_1,\dots,m_{n-1})\in \Z^{n-1}\setminus\{0\}$ and 
$u_m^\kappa=(\eps_1^\kappa)^{m_1}\dots(\eps_{n-1}^\kappa)^{m_{n-1}}\in\Lambda_\kappa$. 
To simplify the notation, we often write $u_m$ instead of $u_m^\kappa$ in the following,
at least when $\kappa$ is fixed.
Note that the action of
$\gamma_{m,\alpha}^\kappa$ on a point $z\in\HH^n$ can be written as
$\gamma_{m,\alpha}^\kappa z=u_m^\kappa z+\alpha$ and hence 
$q= \frac{\alpha}{1-u_m^\kappa}$ is the real fixed vector of $\gamma_{m,\alpha}^\kappa$
that is also a cusp for $\sigma_\kappa^{-1}\Gamma\sigma_\kappa$ (as it was already 
mentioned in Section \ref{hyp-par_sec}).

A simple computation shows that if 
 elements $\gamma_{m,\alpha}$ and $\gamma_{{m'},\beta}$ of the form 
 (\ref{hyp-par_conj_form}) are conjugate
in $\sigma_\kappa^{-1}\Gamma\sigma_\kappa$, then
 $m'=m$ or
$m' = -m$ holds.
Assume first that the elements $\gamma_{m,\alpha}$ and $\gamma_{m,\beta}$ are
conjugate to each other. Again, it follows by a straightforward calculation
that 
\[
\beta  = (u_{m}^\kappa-1)
a+(u_{l}^\kappa)^{-1}\alpha
\]
holds for some $l\in\Z^{n-1}$ and $a\in\tk_\kappa$ in this case.
This means exactly that $\beta$ represents
the same element in the finite factor group 
$\tk_\kappa^m:=\tk_\kappa/(u_m^\kappa-1)\tk_\kappa$
as $(u_l^\kappa)^{-1}\alpha$, and hence for a fixed $m$ (and $\kappa$) the 
hyperbolic-parabolic classes
are represented by the  equivalence classes of $\tk_\kappa^m/\Lambda_\kappa$. 

Now assume that 
the elements
$\gamma_{m,\alpha}$ and $\gamma_{-m,\beta}$ are conjugate for some 
$m\in\Z^{n-1}\setminus \{0\}$ and $\alpha,\beta\in\tk_\kappa$, i.e. 
$\tau^{-1} \gamma_{m,\alpha}\tau =\gamma_{-m,\beta}$ for
some $\tau\in\sigma_\kappa^{-1}\Gamma\sigma_\kappa$. 
Since $\tau^{-1} \gamma_{m,\alpha}\tau$ 
fixes 
$\tau^{-1}\infty$ and also $\tau^{-1} q$, one of these points must be $\infty$.
If $\tau^{-1}\infty=\infty$ was true, then 
the conjugate would be of the form $\gamma_{m,\beta}$, which is impossible
(since $m\neq0$).
It follows that $\tau^{-1}$ takes  $q$  to $\infty$, hence these cusps are equivalent in 
$\sigma_\kappa^{-1}\Gamma\sigma_\kappa$. 
Similarly, if these cusps are equivalent then $\gamma_{m,\alpha}$
 is conjugate to an element $\gamma_{-m,\beta}$.

Based on this the contribution of the hyperbolic-parabolic classes in the trace
can be written
as
\begin{align}\label{hyp_par_total_sum}
&\frac{1}{2}\sum_{\kappa\in\mathcal{S}}\,
\sum_{m\in\Z^{n-1}\setminus 0}\,\,
\sum_{\alpha\in \tk_\kappa^m/\Lambda_\kappa}
\sum_{
\begin{array}{c}
\scriptstyle{
\sigma\in C(\gamma) \setminus\Gamma
}\\
\scriptstyle{
\gamma\sim\gamma_{m,\alpha}^\kappa
}
\end{array}
}
\int_{F_A}
k(z,\sigma^{-1}\gamma\sigma z)u(z)\,d\mu(z)=\nonumber\\
&\quad=\frac{1}{2}
\sum_{\kappa\in\mathcal{S}}\,
\sum_{m\in\Z^{n-1}\setminus 0}\,\,
\sum_{\alpha\in \tk_\kappa^m/\Lambda_\kappa}\,\,
\sum_{\sigma\in C(\gamma_{m,\alpha}^\kappa)\setminus 
\sigma_\kappa^{-1}\Gamma\sigma_\kappa}
\int_{\sigma (\sigma_\kappa^{-1} F_A) }k(z,\gamma_{m,\alpha}^\kappa z)
u(\sigma_\kappa z)\,d\mu(z),
\end{align}
where $C(\gamma_{m,\alpha}^\kappa)$ is the 
centralizer of $\gamma_{m,\alpha}^\kappa$ in $\sigma_\kappa^{-1}\Gamma\sigma_\kappa$.
We multiply the whole sum by $1/2$ since every class is taken into account
for both fixed cusps of their elements except for those whose fixed points
are equivalent in $\Gamma$. But in the latter case  we count these classes twice
for an $m$ and $-m$ as well.

Let us focus on the inner sum
\begin{equation}\label{hpint}
\sum_{\sigma\in C(\gamma_{m,\alpha}^\kappa)\setminus \sigma_\kappa^{-1}\Gamma\sigma_\kappa}
\int_{\sigma (\sigma_\kappa^{-1} F_A) }k(z,\gamma_{m,\alpha}^\kappa z)
u(\sigma_\kappa z)\,d\mu(z).
\end{equation}
First, note that $k(z,\gamma_{m,\alpha}^\kappa z)$ can be written as
\begin{align*}
\psi
\left(
\frac{(E_m^{(1)}x_1-\alpha_1(u_m^{(1)})^{-1/2} )^2}{y_1^2} + 
(E_m^{(1)})^2,\dots,
\frac{(E_m^{(n)}x_n-\alpha_n(u_m^{(n)})^{-1/2} )^2}{y_n^2 } + (E_m^{(n)})^2
\right),
\end{align*}
where $E_m=E_m^\kappa=u_m^{-1/2}-u_m^{1/2}$. Since $\psi$ is compactly supported,
it follows immediately that (\ref{hpint}) is zero for all but finitely many $m$, and
hence the sum in (\ref{hyp_par_total_sum}) is finite.

The union of the sets $\sigma(\sigma_\kappa^{-1} F_A)$ in the integrals above,
where  $\sigma$ runs through the right cosets of the centralizer $C(\gamma_{m,\alpha}^\kappa)$,
makes up the fundamental domain $F_{C(\gamma_{m,\alpha}^\kappa)}$ of
$C(\gamma_{m,\alpha}^\kappa)$ except for the
images of the part $F\setminus  F_A =:F^*_A$. We will now show that for some
cosets it is unnecessary to omit the images of $F_A^*$, since there
we integrate only the zero function.
For this we write the kernel function in the form
\begin{equation}\label{kernel_at_cusps}
k(z,\gamma_{m,\alpha}^\kappa z)=\psi
\left(
\frac{\abs{z_1-q_1 }^2}{(E_m^{(1)})^{-2}y_1^2},\dots,
\frac{\abs{z_n-q_n}^2}{(E_m^{(n)})^{-2}y_n^2}
\right).
\end{equation}
The part 
$\sigma (\sigma_\kappa^{-1} F^*_A)$ is the same as 
\[
\bigcup_{\kappa'}\{z\in F_{C({\gamma_{m,\alpha}^\kappa})}:\,\sigma_\kappa\sigma^{-1}z\in F,\, Y_0^{\kappa'}(\sigma_\kappa\sigma^{-1}z)\geq A \}.
\]
The condition $Y_0^{\kappa'}(\sigma_\kappa\sigma^{-1}z)\geq A$
means that $z\in \sigma\sigma_\kappa^{-1}\sigma_{\kappa'}U_A$, that is,
there is a $w\in U_A$ such that $ z=\sigma\sigma_\kappa^{-1}\sigma_{\kappa'}w$.
Let us define $\nu=\sigma\sigma_\kappa^{-1}\sigma_{\kappa'}=\mtx{a}{b}{c}{d}$. If $w=u+iv$, then
the product of the expressions in the arguments on the right hand side of 
(\ref{kernel_at_cusps}) is
\begin{align*}
\prod_{k=1}^n\frac{\abs{\nu_k w_k-q_k }^2}{(E_m^{(k)})^{-2}y_k(\nu w)^2}
&=
\frac{N(E_m^2)}{Y_0(\nu w)^2}
\prod_{k=1}^n
\abs{\nu_{k} w_k-\frac{\alpha_k}{1-u_m^{(k)}}}^2\\[3mm]
&=
\frac{N((1-u_m)^2)}{Y_0(\nu w)^2}
\prod_{k=1}^n
\abs{\frac{a_kw_k+b_k}{c_kw_k+d_k}-\frac{\alpha_k}{1-u_m^{(k)}}}^2\\[3mm]
&\geq
\frac{N((1-u_m)^2)}{Y_0(\nu w)^2}
\prod_{k=1}^n
\frac{((1-u_m^{(k)})a_k-\alpha_k c_k)^2v_k^2}{(1-u_m^{(k)})^2\abs{c_kw_k+d_k}^2}
\\[3mm]
&
=N((1-u_m)a-\alpha c)^2\prod_{k=1}^n\abs{c_kw_k+d_k}^2\\[3mm]
&\geq N((1-u_m)a-\alpha c)^2N(c)^2 Y_0(w)^2.
\end{align*}

The first two factors of the last product are bounded away from zero by Lemma $2.9_1$ in
\cite{freitag}, at least if they are non-zero. This follows for the second factor easily
but requires some explanation in the case of the first factor.
The point $q$ is a cusp for $\sigma_\kappa^{-1}\Gamma\sigma_\kappa$,
hence $q=\sigma_\kappa^{-1}\beta$ for some cusp $\beta$ of $\Gamma$. 
Let $\lambda\in\mathcal{S}$ denote the base element 
of the $\Gamma$-equivalence class of $\beta$, then
$q=\sigma_\kappa^{-1}\gamma_0\lambda$ for some $\gamma_0\in\Gamma$,
i.e. $\sigma_\lambda^{-1}\gamma_0^{-1}\sigma_\kappa\in\sigma_\lambda^{-1}\Gamma\sigma_\kappa$ 
takes $q$ to $\infty$, hence it is of the form 
$\mtx{e}{f}{\frac{u_m-1}{\delta}}{\frac{\alpha}{\delta}}$,
where $\delta=e\alpha+f(1-u_m)$. 

We first show  that $N(\delta^2)$ depends only on $q$, i.e. on $m$ and $\alpha$.
Assume that $\sigma_\lambda^{-1}\gamma'\sigma_\kappa$
also takes $q$ to $\infty$ for some $\gamma'\in\Gamma$, and therefore it
is of the form
$\mtx{e'}{f'}{\frac{u_m-1}{\delta'}}{\frac{\alpha}{\delta'}}$. Now
\[
(\sigma_\lambda^{-1}\gamma'\sigma_\kappa)(\sigma_\lambda^{-1}\gamma_0^{-1}\sigma_\kappa)^{-1}
=\sigma_\lambda^{-1}\gamma'\gamma_0\sigma_\lambda\in\sigma_\lambda^{-1}\Gamma\sigma_\lambda
\]
fixes $\infty$ (which is a cusp for $\sigma_\lambda^{-1}\Gamma\sigma_\lambda$), and then
\[
\mtx{e'}{f'}{\frac{u_m-1}{\delta'}}{\frac{\alpha}{\delta'}}
\mtx{e}{f}{\frac{u_m-1}{\delta}}{\frac{\alpha}{\delta}}^{-1}
=
\mtx{e'}{f'}{\frac{u_m-1}{\delta'}}{\frac{\alpha}{\delta'}}
\mtx{\frac{\alpha}{\delta}}{-f}{\frac{1-u_m}{\delta}}{e}=
\mtx{\pm u^{\frac{1}2}}{t}{0}{\pm u^{-\frac{1}{2}}}
\]
holds for some $u\in\Lambda_\lambda$ and $t\in\tk_\lambda$. It follows
that $\delta'\,^2=u\delta^2$, hence $N(\delta^2)=N(\delta'^2)$.

Now 
\[
\sigma_\lambda^{-1}\gamma_0^{-1}\sigma_\kappa\nu
=\mtx{*}{*}{-\frac{(1-u_m)a-\alpha c}{\delta}}{*}\in\sigma_\lambda^{-1}\Gamma\sigma_{\kappa'},
\]
and choosing $\Delta:=\sigma_\lambda^{-1}\Gamma\sigma_{\kappa'}$ in
Lemma $2.9_1$ of \cite{freitag} we get that $N((1-u_m)a-\alpha c)^2\geq N(\delta^2)C_0$
for some positive constant $C_0>0$ (assuming that $(1-u_m)a-\alpha c\neq0$).

Next we handle the cases where one of  the factors $N(c)^2$ and $N((1-u_m)a-\alpha c)^2$ is $0$.
Since 
$\sigma=\sigma_\kappa^{-1}\gamma\sigma_{\kappa}$ for some $\gamma\in\Gamma$, hence
$c=0$ holds if and only if
\[
\infty = \nu\infty=\sigma\sigma_\kappa^{-1}\sigma_{\kappa'}\infty=
\sigma_\kappa^{-1}\gamma\kappa',
\]
that is, $\kappa=\sigma_\kappa\infty=\gamma\kappa'$. But $\Gamma$ permutes the
elements of the class of $\kappa'$, so $c=0$ can hold only if $\kappa=\kappa'$,
and then $\nu=\sigma$, and therefore $\infty=\sigma\infty$ holds. Consequently, the condition $Y_0^{\kappa'}(\sigma_\kappa\sigma^{-1}z)\geq A$
reduces to $Y_0(\sigma^{-1}z)=Y_0(z)\geq A$ in this case.

Assume now that $(1-u_m)a-\alpha c=0$, that is,
$\frac{a}{c}=q$ holds.
This means that 
\[
\sigma_\kappa^{-1}\gamma_0\lambda=q=\nu\infty=\sigma_\kappa^{-1}\gamma\kappa',
\]
hence $\kappa'=\lambda$ must hold.
This means that the exceptional set $\sigma (\sigma_\kappa^{-1} F^*_A)$
can be reduced to
\[
\{
z\in F_{C({\gamma_{m,\alpha}^\kappa})}:\,Y_0(\sigma_\lambda^{-1}\sigma_\kappa\sigma^{-1}z)\geq A 
\}.
\]
The element $\sigma_\lambda^{-1}\sigma_\kappa\sigma^{-1}$ takes $q$ to $\infty$, hence - as we have
 already seen above - it is of the form 
$\mtx{e}{f}{\frac{u_m-1}{\delta}}{\frac{\alpha}{\delta}}$,
where $\delta=e\alpha+f(1-u_m)$,
and then 
\[
Y_0(\sigma_{\lambda}^{-1}\sigma_\kappa\sigma^{-1} z) =
\frac{N(\delta ^2)Y_0(z)}
{\prod_{k=1}^n\abs{(u_m^{(k)}-1)z_k+\alpha_k}^2}=
\frac{N(\delta E_m^{-1})^2Y_0(z)}{\prod_{k=1}^n\abs{z_k-q_k}^2}.
\]

All this shows that the expression in (\ref{hpint}) can be written as
\[
\int_{S_A}k(z,\gamma_{m,\alpha}^\kappa z)u(\sigma_\kappa z)\,d\mu(z),
\]
where
\[
S_A =\left\{ z\in F_{C(\gamma_{m,\alpha}^\kappa)} :\, Y_0(z)\leq A,\,
\frac{N(\delta E_m^{-1})^2Y_0(z)}{\prod_{k=1}^n\abs{z_k-q_k}^2}
\leq A\right\},
\]
at least when $A$ is big enough.
Recall that the centralizer $C(\gamma_{m,\alpha}^\kappa)$ and 
its fundamental domain was described in Section \ref{hyp-par_sec}.
In the following we also use some relating notations defined there.
A direct calculation shows now that the last integral above is
\begin{align*}
\int_{S_A}
\psi
&\left(
\frac{\abs{z_1-q_1 }^2}{(E_m^{(1)})^{-2}y_1^2},\dots,
\frac{\abs{z_n-q_n}^2}{(E_m^{(n)})^{-2}y_n^2}
\right)
u(\sigma_\kappa z)\,d\mu(z)=\\[3mm]
&\qquad\qquad\qquad=\int\limits_{S_A-q}
\psi
\left(
\frac{|E_m^{(1)}z_1|^2}{y_1^2},\dots,
\frac{|E_m^{(n)}z_n|^2}{y_n^2}
\right)
u(\sigma_\kappa(z+q))\,d\mu(z),
\end{align*}
where
\[
S_A-q =\left\{ z\in F_C :\, Y_0(z)\leq A,\,
\frac{N(\delta E_m^{-1})^2Y_0(z)}{\prod_{k=1}^n\abs{z_k}^2}
\leq A\right\}.
\]

The two inequalities above can be written in terms of the polar coordinates as follows:
\[
A_\tht:=\frac{N(\delta E_m^{-1})^2\prod_{k=1}^n\cos\tht_k}{A}\leq\prod_{k=1}^nr_k\leq\frac{A}{\prod_{k=1}^n\cos\tht_k}=:A^\tht.
\]
Since $\displaystyle{\frac{dx\,dy}{y^2}=\frac{dr\,d\tht}{r\cos^2\tht}}$,
the integral above becomes after
the change of variables 
\begin{equation}\label{hyp_par_full_int}
\int\limits_{-\pi/2}^{\pi/2}\dots
\int\limits_{-\pi/2}^{\pi/2}
\psi\left(
    \frac{(E_m^{(1)})^2}{\cos^2\tht_1},\dots,\frac{(E_m^{(n)})^2}{\cos^2\tht_n}
\right)
I_u(A,\tht,m,\alpha)
\prod_{k=1}^n\frac{d\tht_k}{\cos^2\tht_k}
    \end{equation}
where
\[
I_u(A,\tht,m,\alpha)=
\int\limits_{
\begin{array}{c}
\scriptstyle{\log r\in P_{m,\alpha}^\kappa}\\
\scriptstyle{    A_\tht \leq Nr\leq A^\tht}
\end{array}
}
u\left(
    \sigma_\kappa(re^{i(\frac{\pi}{2}+\tht)}+q)
\right)
\prod_{k=1}^n
\frac{dr_k}{r_k}.
\]

We handle the zeroth term and the remaining terms of the
Fourier expansion of the function $u(\sigma_\kappa(z+q))$ 
separately. To this end, for any cusp $\kappa'$ we write 
\[
u(\sigma_{\kappa'} z)=\eta_{\kappa'} y_1^{s_1}\dots y_n^{s_n}
+\phi_{\kappa'} y_1^{1-s_1}\dots y_n^{1-s_n}+R_{\kappa'}(z)=M_{\kappa'}(z)+R_{\kappa'}(z).
\]
Subtracting the contribution of the zeroth term 
$M_\kappa(re^{i(\frac{\pi}{2}+\tht)}+q)$
from the integral
$I_u(A,\tht,m,\alpha)$ one obtains
\[
\int\limits_{
\begin{array}{c}
\scriptstyle{\log r\in P_{m,\alpha}^\kappa}\\
\scriptstyle{    A_\tht \leq Nr\leq A^\tht}
\end{array}
}
R_\kappa\left(
    re^{i(\frac{\pi}{2}+\tht)}+q
\right)
\prod_{k=1}^n
\frac{dr_k}{r_k}.
\]
However, this integral does not converge as $A\to\infty$, 
but Proposition 
\ref{u_bound_prop}
gives that
it does converge if one integrates only over
$\{\log r\in P_{m,\alpha}^\kappa:1\leq Nr\}$. Note that by
the compact support of 
$\psi$
the coordinates of the vector $\cos \tht$
can be assumed to be bounded 
away from zero and hence
$R_\kappa$ 
in the above integral
 can be bounded
uniformly exponentially
 using
Proposition 
\ref{u_bound_prop}.
To ensure convergence on the other half of the set $\{\log r\in P_{m,\alpha}^\kappa\}$ 
we  will subtract the main term
of $u$ at $q$.
By the $\Gamma$-invariance of $u$ we have
\[
u(\sigma_\kappa(z+q))
=u(\sigma_\lambda(\sigma_\lambda^{-1}\gamma_0^{-1}\sigma_\kappa(z+q)))
=u\left(\sigma_\lambda \left(-e(eq+f)-\frac{(eq+f)^2}{z}\right)\right),
\]
hence (after the substitution $r\mapsto \frac{1}{r}$) the integral of 
$u\left(\sigma_\kappa(re^{i(\frac{\pi}{2}+\tht)}+q)\right)$ over the set
$\{\log r\in P_{m,\alpha}^\kappa:A_\tht\leq Nr\leq 1\}$ 
becomes 
\begin{align*}
&\int\limits_{
\begin{array}{c}
\scriptstyle{\log r\in P_{m,\alpha}^\kappa}\\
\scriptstyle{     1\leq Nr\leq A_\tht^{-1}}
\end{array}
}
\!\!\!\!\!\!
u\left(\sigma_\lambda\left(-e(eq+f)+(eq+f)^2re^{i(\frac{\pi}{2}-\tht)}\right)\right)
\prod_{k=1}^n
\frac{dr_k}{r_k}=\\[-2mm]
&\qquad=
\int\limits_{
\begin{array}{c}
\scriptstyle{\log r\in P_{m,\alpha}^\kappa}\\
\scriptstyle{     1\leq Nr\leq A_\tht^{-1}}
\end{array}
}
\!\!\!\!\!\!\!M_\lambda\left((eq+f)^2re^{i(\frac{\pi}{2}-\tht)}\right)
+R_\lambda\left((eq+f)^2re^{i(\frac{\pi}{2}-\tht)}-e(eq+f)\right)
\prod_{k=1}^n
\frac{dr_k}{r_k}.
\end{align*}
Here we also used the translation invariance of $M_\lambda$. It is now clear that
the integral of the second term above converges, i.e.
\[
u\left(\sigma_\kappa(re^{i(\frac{\pi}{2}+\tht)}+q)\right)
-M_\lambda\left(-\frac{(eq+f)^2}{re^{i(\frac{\pi}{2}+\tht)}}\right)
=u\left(\sigma_\kappa(re^{i(\frac{\pi}{2}+\tht)}+q)\right)
-M_\lambda\left(-\frac{\delta^2E_m^{-2}u_m^{-1}}{re^{i(\frac{\pi}{2}+\tht)}}\right)
\]
is integrable over $\{\log r\in P_{m,\alpha}^\kappa:\, Nr\leq 1\}$. 
A straightforward computation (detailed below) shows the 
same for $M_\kappa(re^{i(\frac{\pi}{2}+\tht)})$ and then consequently for the function
\[
\tilde{u}_{m,\alpha}^\kappa
(re^{i(\frac{\pi}{2}+\tht)})=u\left(\sigma_\kappa(re^{i(\frac{\pi}{2}+\tht)}+q)\right)
-M_\kappa\left(re^{i(\frac{\pi}{2}+\tht)}\right)
-M_\lambda\left(-\frac{\delta^2E_m^{-2}u_m^{-1}}{re^{i(\frac{\pi}{2}+\tht)}}\right).
\]
A similar argument gives that $\tilde{u}_{m,\alpha}^\kappa(re^{i(\frac{\pi}{2}+\tht)})$ is integrable over
$\{\log r\in P_{m,\alpha}^\kappa:1\leq Nr\}$ and hence 
over the whole set $\{\log r\in P_{m,\alpha}^\kappa\}$.

Now $I_u(A,\tht,m,\alpha)$ can be written as
\begin{equation}\label{I_sum}
\int\limits_{
\begin{array}{c}
\scriptstyle{\log r\in P_{m,\alpha}^\kappa}\\
\scriptstyle{    A_\tht \leq Nr\leq A^\tht}
\end{array}
}
\!\!\!\!\!\!\!\!\!\!\!\!\!\left[M_\kappa\!\left(re^{i(\frac{\pi}{2}+\tht)}\right)
+M_\lambda\!\left(-\frac{\delta^2E_m^{-2}u_m^{-1}}{re^{i(\frac{\pi}{2}+\tht)}}\right)
\right]
\prod_{k=1}^n
\frac{dr_k}{r_k}
+\!\!
\int\limits_{\log r\in P_{m,\alpha}^\kappa}
\!\!\!\!\!\!\!\!\tilde{u}_{m,\alpha}^\kappa(re^{i(\frac{\pi}{2}+\tht)})
\prod_{k=1}^n
\frac{dr_k}{r_k}+o(A).
\end{equation}
First we turn to the second integral above. 
Observe that the function $U(z):=u(\sigma_\kappa(z+q))$ is invariant under the action
of $\rho_{l_j}$ (defined in (\ref{rho_l_def})):
\begin{align*}
U(\rho_{l_j}z)&
=u(\sigma_{\kappa}(u_{l_j}z+q))
=u(\sigma_{\kappa}(u_{l_j}(z+q)+(1-u_{l_j})q))
\\&=u\left(\sigma_{\kappa}\gamma(l_j)(z+q)\right)=u(\sigma_\kappa(z+q))=U(z),
\end{align*}
because the element $\gamma(l_j)$ is in the centralizer  $C(\gamma_{\alpha,m}^\kappa)\leq 
\sigma_\kappa^{-1}\Gamma\sigma_\kappa$, and $u$ is invariant under the action of $\Gamma$.
The same invariance holds for $M_\kappa(z)$ and 
$M_\lambda\left(-\frac{\delta^2E_m^{-2}u_m^{-1}}{z}\right)$ as well, hence
the invariance of $\tilde{u}_{m,\alpha}^\kappa(z)$ under the action of $\rho_{lj}$  follows.

Let us define the function
\[
F^\kappa_{m,\alpha}(z)=
\int\limits_{\log r\in P_{m,\alpha}^\kappa}\tilde{u}_{m,\alpha}^\kappa(rz)
\prod_{k=1}^n\frac{dr_k}{r_k}.
\]
By the observation of the last paragraph we can see as in the case of mixed elements
that $F^\kappa_{m,\alpha}$ is invariant under coordinate-wise scalar multiplication, i.e.
$F^\kappa_{m,\alpha}(z)=F^\kappa_{m,\alpha}(\tht)$ depends only on 
$\tht$ where $z=re^{i(\frac{\pi}{2}+\tht)}$.

Since the Laplacian $\Delta_k$ is an invariant operator
(i.e. it commutes with the action of $PSL(2,\R)^n$), we get that $U(z)$ is an eigenfunction
of it with the eigenvalue $\lambda_k$. But the same is true $M_\kappa(z)$ and 
$M_\lambda\left(-\frac{\delta^2E_m^{-2}u_m^{-1}}{z}\right)$, and therefore 
$\tilde{u}_{m,\alpha}^\kappa(z)$ and also $F^\kappa_{m,\alpha}(z)$ 
are eigenfunctions
of $\Delta_k$ with the eigenvalue $\lambda_k$. In the same way as in the case of mixed elements
we conclude that the contribution of the second integral of (\ref{I_sum})  in
(\ref{hyp_par_full_int}) is
\begin{align*}
F^\kappa_{m,\alpha}(0,\dots,0)&
\int\limits_{-\frac{\pi}{2}}^{\frac{\pi}{2}}
\dots
\int\limits_{-\frac{\pi}{2}}^{\frac{\pi}{2}}
\psi\left(
    \frac{(E_m^{(1)})^2}{\cos^2\tht_1},\dots,\frac{(E_m^{(n)})^2}{\cos^2\tht_n}
\right)
\left(\prod_{k=1}^n\frac{f_{\lambda_k}(\tht_k)\,d\tht_k}{\cos^2\tht_k}\right),
\end{align*}
where
 $f_{\lambda_k}(\tht)$ is the unique solution of the differential equation
 (\ref{mixed_diff_eq})
with the initial condition $f_{\lambda_k}(0) = 1$ and $f'_{\lambda_k}(0) = 0$ and
\[
F^\kappa_{m,\alpha}(0,\dots,0) = 
\int\limits_{\log r\in P_{m,\alpha}^\kappa}\tilde{u}(r_1i,\dots,r_ni)
\prod_{k=1}^n\frac{dr_k}{r_k}.
\]

Finally we calculate the first integral in (\ref{I_sum}). The map
$\,r\mapsto 
(\log r_1,\dots,\log r_n)$
maps the set we integrate on to 
\[
\mathcal{F}_A:=
\bigcup_{n^{-\frac{1}{2}}\log A_\tht\leq t\leq n^{-\frac{1}{2}}\log A^\tht}\{t \mathbf{1}\}\times \tilde{P}_{m,\alpha}^\kappa,
\]
and the determinant of its Jacobian  is $\prod_{k=1}^n r_k^{-1}$, 
so the  integral  is
\begin{align}\label{hp_main_term_integral}
\int\limits_{\mathcal{F}_A}
M_\kappa\left(e^{x+i(\frac{\pi}{2}+\tht)}\right)
+M_\lambda\left(-\frac{\delta^2E_m^{-2}u_m^{-1}}{e^{x+i(\frac{\pi}{2}+\tht)}}\right)\,\prod_{k=1}^ndx_k.
\end{align}
The first term of the zeroth Fourier coefficient $M_\kappa$ gives the following contribution to the integral above: 
\begin{align}\label{frst_term_int}
\eta_\kappa\prod_{k=1}^n\left(\cos\tht_k\right)^{s_k}
\int\limits_{\mathcal{F}_A}
\exp\left(\sum_{k=1}^ns_kx_k\right)\,\prod_{k=1}^n\,dx_k.
\end{align}
Let $\mathcal{L}$ be the linear map that maps the standard basis of $\R^n$ to
$\mathbf{1},v_1^\kappa,\dots,v_{n-1}^\kappa$ (see Section \ref{hyp-par_sec} for the
definitions) so that 
$\mathcal{L}([n^{-\frac12}\log A_\tht;n^{-\frac12}\log A^\tht]\times[-\frac12;\frac12]^{n-1})=\mathcal{F}_A$. 
The matrix of this map w.r.t. the standard basis is
$[\mathcal{L}]=\mathcal{E}_\kappa\mtx{n^{-\frac12}}{}{}{L_{m,\alpha}^\kappa}$, where $\mathcal{E}_\kappa$ is defined in (\ref{eps_s_eqs}) and $L_{m,\alpha}^\kappa$ 
is defined in (\ref{L_mtx_def}).
Hence, after a change of variables,
the integral in (\ref{frst_term_int}) becomes
\begin{align}\label{int_calculation}
\abs{\det[\mathcal{L}]}&
\int\limits_{n^{-\frac12}\log A_\tht}^{n^{-\frac12}\log A^\tht}e^{n^{-\frac12}t_0\sum_{k=1}^ns_k}\,dt_0
\prod_{q=1}^{n-1}\int\limits_{-1/2}^{1/2}
\exp\left(t_q\sum_{j=1}^{n-1}l_q^{(j)}\sum_{k=1}^n s_k\log\eps_j^\kappa\,^{(k)}
\right)
\,dt_q=\nonumber\\[3mm]
&=\abs{\det[\mathcal{L}]}
\int\limits_{n^{-\frac12}\log A_\tht}^{n^{-\frac12}\log A^\tht}e^{t_0n^{\frac12}s}\,dt_0
\prod_{q=1}^{n-1}\int\limits_{-1/2}^{1/2}
\exp\left(2\pi it_q\sum_{j=1}^{n-1}l_q^{(j)}m_{u,\kappa}^{(j)}
\right)
\,dt_q
\end{align}
by (\ref{eps_s_eqs}). This expression is zero unless 
$(L_{m,\alpha}^\kappa)^T m_{u,\kappa}=0$ holds.
Since 
\[
0\neq \det[\mathcal{L}]=n^{-\frac12}\det \mathcal{E}_\kappa\det L_{m,\alpha}^\kappa,
\]
i.e.  $\det L_{m,\alpha}^\kappa\neq 0$,
this can hold only if $m_{u,\kappa}=0$. In the latter case (\ref{frst_term_int}) becomes
\[
\frac{\eta_\kappa\abs{\det \mathcal{E}_\kappa}\abs{\det L_{m,\alpha}^\kappa}}{ns}
\left[
A^{s}- 
\frac{N(\delta E_m^{-1})^{2s}\prod_{k=1}^n(\cos\tht_k)^{2s}}{A^{s}}
\right]=
\frac{\eta_\kappa\abs{\det \mathcal{E}_\kappa}\abs{\det L_{m,\alpha}^\kappa}A^s}{ns}+o(A).
\]
Here we used that since $\psi$ has a compact support, the values $\cos\tht_k$ are 
bounded away from zero by a constant depending on $\psi$ and $\Gamma$.
The same argument gives that the contribution of the second term of the zeroth Fourier coefficient
$M_\kappa$ in $I_u(A,\tht,m,\alpha)$ is
\begin{equation}\label{hp_2nd_main_term}
\frac{\phi_\kappa\abs{\det \mathcal{E}_\kappa}\abs{\det L_{m,\alpha}^\kappa}A^{1-s}}{n(1-s)}+o(A).
\end{equation}

Now we turn to the integral of the second term in (\ref{hp_main_term_integral}).
The first term of this Fourier coefficient contributes
\begin{align*}
\eta_\lambda\prod_{k=1}^n
\left((\delta^{(k)})^2(E_m^{(k)})^{-2}(u_m^{(k)})^{-1}\cos\tht_k\right)^{s_k}
\int\limits_{\mathcal{F}_A}
\exp\left(-\sum_{k=1}^ns_kx_k\right)\,\prod_{k=1}^n\,dx_k.
\end{align*}
The same computation as above shows that this is zero unless $m_{u,\kappa}=0$, in which
case one gets
\begin{align*}
\eta_\lambda \abs{\det[\mathcal{L}]}N(\delta^2 E_m^{-2})^{s}&\prod_{k=1}^n(\cos\tht_k)^{s}
\int\limits_{n^{-\frac12}\log (A^\tht)^{-1}}^{n^{-\frac12}\log A_\tht^{-1}}
\!\!\!\!\!\!\!e^{t_0n^{\frac12}s}\,dt_0
=\frac{\eta_\lambda\abs{\det\mathcal{E}_\kappa}\abs{\det L_{m,\alpha}}}{ns}
A^s+o(A).
\end{align*}
Finally, we get one more term which is of the
form as the one in (\ref{hp_2nd_main_term}), replacing $\phi_\kappa$ by $\phi_\lambda$.

We conclude that (\ref{hp_main_term_integral}) contributes in (\ref{hyp_par_full_int})
(aside from an $o(A)$ term) 
 as
\[
\left(
\frac{(\eta_\kappa+\eta_\lambda) A^s}{s}+\frac{(\phi_\kappa+\phi_\lambda) A^{1-s}}{1-s}
\right)
\frac{
\abs{\det  L_{m,\alpha}^\kappa}
\abs{\det \mathcal{E}_\kappa}}{n}
\!\!\!\int\limits_{-\pi/2}^{\pi/2}\!\!\!\!\dots\!\!\!\!
\int\limits_{-\pi/2}^{\pi/2}
\!\!\psi\left(
    \frac{E_m^2}{\cos^2\tht}
\right)
\prod_{k=1}^n\frac{d\tht_k}{\cos^2\tht_k},
\]
where the last integral is
\begin{align*}
&2^n
\int\limits_0^{\frac{\pi}{2}}
\dots
\int\limits_0^{\frac{\pi}{2}}
\psi\left(
    \frac{(E_m^{(1)})^2}{\cos^2\tht_1},\dots,\frac{(E_m^{(n)})^2}{\cos^2\tht_n}
\right)
\prod_{k=1}^n\frac{d\tht_k}{\cos^2\tht_k}=\\[6mm]
&=
\frac{1}{N(\abs{E_m})}
\int\limits_{(E_m^{(1)})^2}^{\infty}
\dots
\int\limits_{(E_m^{(n)})^2}^{\infty}
\frac{\psi(t_1,\dots,t_n)}{\prod\limits_{k=1}^n\sqrt{t_k-(E_m^{(k)})^2}}
\,\prod_{k=1}^n\,dt_k
=
\frac{1}{N(\abs{E_m})}
g(\log u_m^{(1)},\dots,\log u_m^{(n)}).
\end{align*}

We summarize the results obtained so far.
 The contribution
of the hyperbolic-parabolic
classes in the truncated trace is
\begin{equation}\label{hyp_par_partial_result}
\Sigma_{\mathrm{hyp-par}}
=\frac{1}{2}\sum_{\kappa\in\mathcal{S}}\,\,\sum_{m\in\Z^{n-1}\setminus \{0\}}
\,\,\sum_{\alpha\in \tk_\kappa^m/\Lambda_\kappa}\,
\left[\delta_{m_{u,\kappa}}M_\kappa(m,\alpha,A)+C_\kappa(m,\alpha)\right]+o(A),
\end{equation}
where 
the main term $M_\kappa(m,\alpha,A)$ is given by
\[
\left(
\frac{(\eta_\kappa+\eta_{\tilde{\kappa}_{m,\alpha}}) A^s}{s}
+\frac{(\phi_\kappa+\phi_{\tilde{\kappa}_{m,\alpha}}) A^{1-s}}{1-s}
\right)
\frac{
\abs{\det L_{m,\alpha}^\kappa}
\abs{\det \mathcal{E}_\kappa}}{n N(\abs{E_m^\kappa})}
g(\log u_m^\kappa),
\]
where $\tilde{\kappa}_{m,\alpha}\in\mathcal{S}$ is the cusp for $\Gamma$ that can be
taken
 (by an element of $\Gamma$) to $\sigma_\kappa \frac{\alpha}{1-u_m^\kappa}$,
and the term $C_\kappa(m,\alpha)$ is
\begin{align}\label{hyp_par_const_terms}
\int\limits_{\log r\in P_{m,\alpha}^\kappa}\tilde{u}_{m,\alpha}^\kappa(ri)
\prod_{k=1}^n\frac{dr_k}{r_k}&
\int\limits_{-\frac{\pi}{2}}^{\frac{\pi}{2}}
\dots
\int\limits_{-\frac{\pi}{2}}^{\frac{\pi}{2}}
\psi\left(
    \frac{(E_m^{\kappa})^2}{\cos^2\tht}
\right)
\left(\prod_{k=1}^n\frac{f_{\lambda_k}(\tht_k)\,d\tht_k}{\cos^2\tht_k}\right),
\end{align}
Moreover, the terms above are zero for any cusp $\kappa$  for all but finitely many $m$.

Let us fix a $\kappa\in\mathcal{S}$, an $m\in\Z^{n-1}\setminus \{0\}$ and an 
$\alpha\in\tk_\kappa^m/\Lambda_\kappa$  in the sum (\ref{hyp_par_partial_result}) above. The corresponding term
is counted twice, it occurs also in the case of the cusp 
$\kappa'=\tilde{\kappa}_{m,\alpha}$ 
for an appropriate $m'\in\Z^{n-1}\setminus\{0\}$ and a class $\beta$. 
It follows that the main terms $\delta_{m_{u,\kappa}}M_\kappa(m,\alpha,A)$ and 
$\delta_{m_{u,\kappa'}}M_{\kappa'}(m',\beta,A)$ are equal.

At this point we specify the function $u(z)$, namely, we work with
the Eisenstein series $E_\kappa(z,s,0)$ for some fixed $\frac{1}{2}<s<1$ 
(defined in (\ref{Eisenstein_def})).
It is not a cusp form and $s_1=\ldots=s_n$ hold for its eigenvalues, and therefore
$m_{u,\kappa}=m_{u,\kappa'}=0$ must hold by (\ref{eps_s_eqs}) which implies 
$M_\kappa(m,\alpha,A)=M_{\kappa'}(m',\beta,A)$. 
Also, since $\eta_\kappa=1$ and $\eta_{\kappa'}=0$, the first factor
of each of these main terms is non-zero.

Assume that $u_m^\kappa\neq u_{m'}^{\tilde{\kappa}_{m,\alpha}}$ holds,
then we can choose the function $g$ so that exactly one of 
$g(\log u_m^\kappa)$ and $g(\log u_m^{\tilde{\kappa}_{m,\alpha}})$ is zero.
This  yields that exactly one of  $M_\kappa(m,\alpha,A)$ and 
$M_{\tilde{\kappa}_{m,\alpha}}(m',\beta,A)$ is zero, which is impossible,
and hence $u_m^\kappa=u_{m'}^{\tilde{\kappa}_{m,\alpha}}$ must hold.

Let us choose the vector $m=\mathbf{e}_j=(0,\dots,0,1,0,\dots,0)^T$ whose the $j$th
coordinate
is $1$ and the others are zero ($1\leq j\leq n-1$).
Then $u_{\mathbf{e}_j}^\kappa=\eps_j^\kappa$, and it follows that
$\eps_j^\kappa\in\Lambda_{\tilde{\kappa}_{m,\alpha}}$, i.e.
$\Lambda_\kappa\subset \Lambda_{\tilde{\kappa}_{m,\alpha}}$.
Changing the role of $\kappa$ and $\tilde{\kappa}_{m,\alpha}$ in the previous argument
and specifying $m'$ instead of $m$ we infer that 
$\Lambda_{\tilde{\kappa}_{m,\alpha}}\subset \Lambda_\kappa$ hence
these groups are identical. Hence to conclude the proof of Proposition \ref{multi_group_prop}., that is, to show that the multiplier group 
$\Lambda_\kappa$ is independent of $\kappa$  it is enough to prove
the following:
\begin{lemma}
    For any two different cusps $\kappa$, $\kappa'$ there is
 a hyperbolic-parabolic  element in $\Gamma$ with fixed points $\kappa$ and $\kappa'$.
\end{lemma}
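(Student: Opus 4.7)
The strategy is to leverage the arithmetic of the Hilbert modular group via Theorem~\ref{1_3_commensurable_thm}. Fix $\rho\in\psl$ and a totally real number field $K$ of degree $n$ such that $\rho^{-1}\Gamma_K\rho$ is strictly commensurable with $\Gamma$, and set $\Gamma_0:=\Gamma\cap\rho^{-1}\Gamma_K\rho$, a finite-index subgroup of each. Passage to a finite-index subgroup preserves parabolic fixed points (some power of any parabolic lies in the subgroup) together with the ranks of $\tk_\infty$ and $\Lambda_\infty$ at every such point, so $\Gamma$, $\Gamma_0$ and $\rho^{-1}\Gamma_K\rho$ share the same set of cusps in $(\R\cup\{\infty\})^n$. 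In particular $\rho\kappa$ and $\rho\kappa'$ are distinct cusps of $\Gamma_K$, so, by the standard description of cusps of the Hilbert modular group, they are the diagonal embeddings of distinct points $\alpha_1,\alpha_2\in K\cup\{\infty\}$.

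It therefore suffices to exhibit a hyperbolic-parabolic $\gamma_K\in\Gamma_K$ with fixed points $\alpha_1,\alpha_2$: then $\rho^{-1}\gamma_K\rho$ is a hyperbolic-parabolic element of $\rho^{-1}\Gamma_K\rho$ with fixed points $\kappa,\kappa'$, and the finiteness of $[\rho^{-1}\Gamma_K\rho:\Gamma_0]$ forces two of the cosets $(\rho^{-1}\gamma_K\rho)^j\Gamma_0$ to coincide, so a positive power $(\rho^{-1}\gamma_K\rho)^M$ lies in $\Gamma_0\subset\Gamma$. Powering preserves both fixed points and the hyperbolic-parabolic character.

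To build $\gamma_K$, take $\sigma=\mtx{a}{b}{c}{d}\in GL_2(K)$ with $\sigma\infty=\alpha_1$, $\sigma 0=\alpha_2$, entries in $\OO_K$ (clear denominators), and determinant $\delta=ad-bc\in\OO_K\setminus\{0\}$. Any element of $\psl$ that fixes both $\alpha_1$ and $\alpha_2$ coordinate-wise has the form $\sigma\,\mathrm{diag}(\lambda,\lambda^{-1})\,\sigma^{-1}$ with $\lambda\in K^\times$, embedded in $(\R^\times)^n$ via the $n$ real embeddings of $K$. A direct calculation using $ad-bc=\delta$ gives
\begin{equation*}
\sigma\mtx{\lambda}{0}{0}{\lambda^{-1}}\sigma^{-1}
=\mtx{\lambda+\delta^{-1}bc(\lambda-\lambda^{-1})}{\delta^{-1}ab(\lambda^{-1}-\lambda)}{\delta^{-1}cd(\lambda-\lambda^{-1})}{\lambda^{-1}-\delta^{-1}bc(\lambda-\lambda^{-1})},
\end{equation*}
so, since $ab,bc,cd\in\OO_K$, this matrix belongs to $\Gamma_K=PSL(2,\OO_K)$ as soon as $\lambda\in\OO_K^\times$ and $\lambda-\lambda^{-1}\in\delta\OO_K$.

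By Dirichlet's unit theorem $\OO_K^\times$ has rank $n-1\geq 1$, hence contains a non-torsion unit $\epsilon$. As $(\OO_K/\delta\OO_K)^\times$ is finite there exists $N>0$ with $\epsilon^N\equiv 1\pmod{\delta\OO_K}$; setting $\lambda:=\epsilon^N$ gives $\lambda-\lambda^{-1}=\lambda^{-1}(\lambda-1)(\lambda+1)\in\delta\OO_K$, and $\lambda$ remains non-torsion. The only roots of unity in a totally real field are $\pm 1$, so $\lambda^{(k)}\neq\pm 1$ at every embedding, and $\gamma_K:=\sigma\,\mathrm{diag}(\lambda,\lambda^{-1})\,\sigma^{-1}\in\Gamma_K$ is hyperbolic in each coordinate with fixed points $\alpha_1,\alpha_2$. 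The main (essentially only) obstacle is the integrality bookkeeping in the $2\times 2$ matrix computation; the rest is formal manipulation of commensurable groups together with Dirichlet's theorem.
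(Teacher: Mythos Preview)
Your proof is correct and follows essentially the same route as the paper: reduce to the Hilbert modular group $\Gamma_K$ via commensurability (Theorem~\ref{1_3_commensurable_thm}), construct the desired element there using a unit congruent to $1$ modulo a suitable ideal (Dirichlet's theorem plus finiteness of a residue ring), then pass back to $\Gamma$ by taking a power. The only cosmetic difference is that the paper first moves one cusp to a fixed ideal-class representative $\lambda_j$ via a scaling matrix $A_j\in SL(2,K)$ and works inside the stabilizer of $\infty$ in $A_j^{-1}\Gamma_K A_j$, whereas you take an ad~hoc $\sigma\in GL_2(\OO_K)$ sending $(\infty,0)$ to $(\alpha_1,\alpha_2)$ and conjugate a diagonal matrix directly; your bookkeeping is slightly more streamlined but the content is the same.
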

\begin{proof}
    In the first step we show the analogous statement for the Hilbert modular group 
    $\Gamma_K$
    and any two different cusps $\kappa$ and $\kappa'$ for $\Gamma_K$.
These cusps can and will be represented by an element of the field $K$ and the corresponding
vector is obtained via the different embeddings of $K$ into $\R$.
     It is well-known that
the number of the equivalence classes of cusps for $\Gamma_K$ is the class number 
$h=h(K)$ of $K$ (see Proposition 20 on page 188 in \cite{siegel}).
These classes are represented by a fixed set of integer ideals
$\mathfrak{a}_1,\dots,\mathfrak{a}_h\subset \OO_K$ such that the corresponding
cusps are written in the form $\lambda_j=\rho_j/\sigma_j$ where $\rho_j,\sigma_j\in\OO_K$
and $\mathfrak{a}_j=(\rho_j, \sigma_j)$.

Assume first that $\kappa=\lambda_j$ for some $1\leq j\leq h$.
 Let us fix the elements $\eta_j,\xi_j\in\mathfrak{a}_j^{-1}$ such that
$\rho_j\eta_j-\xi_j\sigma_j=1$ holds, then the matrix
\begin{align*}
A_j=\mtx{\rho_j}{\xi_j}{\sigma_j}{\eta_j}\in SL(2,K)
\end{align*}
takes $\infty$ to $\lambda_j$, hence $\infty$ is a cusp of
$A_j^{-1}\Gamma_K A_j$.
 The stabilizer of $\infty$ in this group consists of
 elements of the form
 \[
 \mtx{u}{\zeta u^{-1}}{0}{u^{-1}}
 \]
 with $u\in\OO_K^\times$ and $\zeta\in \id{a}_j^{-2}$ (see \cite{siegel}). 
 For a $\kappa'\in (K\cup\{\infty\})\setminus \{\kappa\}$, let
 $c=A_j^{-1}\kappa'$ be a cusp of $A_j^{-1}\Gamma_K A_j$ different from
 $\infty$. We show that
 the latter matrix above can be chosen so that its other fixed cusp is $c$.
For this, it is enough to choose the unit $u$ so that $c(1-u^2)\in\mathfrak{a}_j^{-2}$
holds. But this can be reached since
for an arbitrary integral ideal $\mathfrak{a}$ one can choose $u$ so that
$\mathfrak{a}\mid (1-u^2)\Longleftrightarrow 1-u^2\in\mathfrak{a}$ holds.

It follows that there is a hyperbolic-parabolic element in $\Gamma$ with
fixed points $\lambda_j$ and $\kappa'$. But any cusp $\kappa$ can be written
as $\gamma_\kappa^{-1}\lambda_j$ for some $j$ and $\gamma_\kappa^{-1}\in\Gamma$,
so if $\gamma\in\Gamma$ is a hyperbolic-parabolic element that fixes $\lambda_j$ and
$\gamma_\kappa \kappa'$, then $\gamma_\kappa^{-1}\gamma\gamma_\kappa$ fixes $\kappa$
and $\kappa'$.
Since the cusps 
are the same for any finite index subgroup of $\Gamma_K$ (though their equivalence classes are not), the claim of the lemma follows now from Theorem \ref{1_3_commensurable_thm}.
\end{proof}
From now on, we drop the index in the notation of the multiplier group and
assume that the generating set $\eps_1^{\kappa},\dots,\eps_{n-1}^{\kappa}$
is the same for any $\kappa\in\mathcal{S}$. 
Hence the matrices $\mathcal{E}_\kappa$ are identical
for any $\kappa$, so we omit the indices here as well.
Note that the integer vectors $m_{u,\kappa}$ are also defined in terms of 
$\mathcal{E}$ and therefore their common value will be denoted by $m_u$.

Returning to the main terms $M_\kappa(m,\alpha,A)$ and
$M_{\tilde{\kappa}_{m,\alpha}}(m',\beta,A)$ in our argument,
from  $u_m^\kappa= u_{m'}^{\tilde{\kappa}_{m,\alpha}}$ we infer that
$m=m'$ and we simply write $u_m$ and $E_m$ in the following.
Finally, the equality of the main terms implies that
$|\det L_{m,\alpha}^\kappa|=
|\det L_{m,\beta}^{\tilde{\kappa}_{m,\alpha}}|$. 

Now we return to a general form $u$, and we only assume that it is
not a cusp form and $m_u=0$ holds (otherwise
there are no main terms in (\ref{hyp_par_partial_result})).
We split each main term $M_\kappa(m,\alpha,A)$ into two parts:
\begin{align*}
&\left(
\frac{\eta_\kappa A^s}{s}
+\frac{\phi_\kappa A^{1-s}}{1-s}
\right)
\frac{
\abs{\det L_{m,\alpha}^\kappa}
\abs{\det \mathcal{E}}}{n N(\abs{E_m})}
g(\log u_m)+
\\[3mm]
&\qquad\qquad+
\left(
\frac{\eta_{\tilde{\kappa}_{m,\alpha}} A^s}{s}
+\frac{\phi_{\tilde{\kappa}_{m,\alpha}} A^{1-s}}{1-s}
\right)
\frac{
\abs{\det L_{m,\alpha}^\kappa}
\abs{\det \mathcal{E}}}{n N(\abs{E_m})}
g(\log u_m).
\end{align*}
This main term has the (equal) pair $M_{\tilde{\kappa}_{m,\alpha}}(m,\beta,A)$,
and if $\lambda = \tilde{\kappa}_{m,\alpha}$ then clearly 
 $\tilde{\lambda}_{m,\beta}=\kappa$
 and
 the pair of $M_{\lambda}(m,\beta,A)$
 is $M_\kappa(m,\alpha,A)$. That is, this pairing
 gives a bijection for every $m$ on the set of pairs
 $(\kappa,\alpha)$, where $\kappa\in\mathcal{S}$ and 
 $\alpha\in\tk_\kappa^m/\Lambda$. Moreover,
 the term $M_{\tilde{\kappa}_{m,\alpha}}(m,\beta,A)$
 has the split form
 \begin{align*}
&\left(
\frac{\eta_{\tilde{\kappa}_{m,\alpha}} A^s}{s}
+\frac{\phi_{\tilde{\kappa}_{m,\alpha}} A^{1-s}}{1-s}
\right)
\frac{
|\det L_{m,\beta}^{\tilde{\kappa}_{m,\alpha}}|
\abs{\det \mathcal{E}}}{n N(\abs{E_m})}
g(\log u_m)+
\\[3mm]
&\qquad\qquad+
\left(
\frac{\eta_{\kappa} A^s}{s}
+\frac{\phi_{\kappa} A^{1-s}}{1-s}
\right)
\frac{
|\det L_{m,\beta}^{\tilde{\kappa}_{m,\alpha}}|
\abs{\det \mathcal{E}}}{n N(\abs{E_m})}
g(\log u_m).
\end{align*}
By the last remark of the previous paragraph we
have that the first  term of $M_\kappa(m,\alpha,A)$
is the second term of $M_{\tilde{\kappa}_{m,\alpha}}(m,\beta,A)$
and vice versa. 
These simple observations imply immediately that if
we sum the main terms in (\ref{hyp_par_partial_result}) obtaining
\begin{align*}
\frac{\delta_{m_{u}}\abs{\det \mathcal{E}}}{2n}
&\left[
\sum_{\kappa\in\mathcal{S}}
\left(
\frac{\eta_\kappa A^s}{s}
+\frac{\phi_\kappa A^{1-s}}{1-s}
\right)
\sum_{m\in\Z^{n-1}\setminus \{0\}}
\frac{g(\log u_m)}{N(\abs{E_m})}
\sum_{\alpha\in \tk_\kappa^m/\Lambda}\,\,
\abs{\det L_{m,\alpha}^\kappa}
\right.\\[3mm]
&\quad+\left.
\sum_{\kappa\in\mathcal{S}}\,\,\sum_{m\in\Z^{n-1}\setminus \{0\}}
\frac{g(\log u_m)}{N(\abs{E_m})}
\sum_{\alpha\in \tk_\kappa^m/\Lambda}
\left(
\frac{\eta_{\tilde{\kappa}_{m,\alpha}} A^s}{s}
+\frac{\phi_{\tilde{\kappa}_{m,\alpha}} A^{1-s}}{1-s}
\right)
\abs{\det L_{m,\alpha}^\kappa}
\right],
\end{align*}
then the two triple sums above are equal, hence this expression  simply becomes
\begin{equation}\label{pre_hyp-par_main_term}
\frac{\delta_{m_{u}}\abs{\det \mathcal{E}}}{n}
\left[
\sum_{\kappa\in\mathcal{S}}
\left(
\frac{\eta_\kappa A^s}{s}
+\frac{\phi_\kappa A^{1-s}}{1-s}
\right)
\sum_{m\in\Z^{n-1}\setminus \{0\}}
\frac{g(\log u_m)}{N(\abs{E_m})}
\sum_{\alpha\in \tk_\kappa^m/\Lambda}\,\,
\abs{\det L_{m,\alpha}^\kappa}
\right].
\end{equation}

Next we give group theoretic interpretations of the 
quantities $N(|E_m|)$ and $|\det L_{m,\alpha}^\kappa|$.
The sublattice $(u_m-1)\tk_\kappa$ of $\tk_\kappa$ is 
obtained by coordinate-wise multiplication, i.e. via multiplication by a diagonal matrix 
with entries $(u_m-1)^{(k)}$ in its diagonal.
It is well-known that the index of this sublattice in $\tk_\kappa$, i.e.
the order of the factor group $\tk_\kappa^m=\tk_\kappa/(u_m-1)\tk_{\kappa}$
is the absolute value of the 
determinant of this matrix, that is simply $|N(u_m-1)|=N(|E_m|)$.

Now we consider the $\Lambda$-equivalent elements of $\tk_\kappa^m$.
Assume that for an $\alpha\in\tk_\kappa^m$
we have $u_l\alpha=\alpha$ (in $\tk_\kappa^m$) for some $l\in\Z^{n-1}$.
This means exactly that $\frac{u_{l}-1}{u_m-1}\alpha\in\tk_\kappa$.
A simple computation shows that in this case the element
\[
\gamma(l,m,\alpha):=\mtx{u_l^{1/2}}{\frac{u_l-1}{u_m-1}\alpha u_l^{-1/2}}{0}{u_l^{-1/2}}\in 
C(\gamma^\kappa_{m,\alpha}).
\]
Every element of the centralizer $C(\gamma^\kappa_{m,\alpha})$
has this form by Proposition \ref{hyp_par_centralizer}, and 
hence $u_l\alpha=\alpha$ is equivalent to 
$\gamma(l,m,\alpha)\in C(\gamma^\kappa_{m,\alpha})$. Again, by Proposition \ref{hyp_par_centralizer} this holds if and only if
$\log u_l$ is in the lattice spanned by $v_1^\kappa,\dots,v_{n-1}^\kappa$
in the  subspace $V=\{a\in\R^n:a_1+\dots+a_n=0\}$, where
$v_j^\kappa=l_j^{(1)}\log \eps_1+\dots+l_j^{(n-1)}\log\eps_{n-1}$ and
the integer vectors $l_j\in\Z^{n-1}$ are defined in Proposition
\ref{hyp_par_centralizer}.
Hence, for a fixed $\alpha$, the number of inequivalent  points $u_l\alpha\in\tk_\kappa^m$ 
is exactly the index of the before-mentioned sublattice in the
lattice generated by $\log\eps_1,\dots,\log\eps_{n-1}$ in $V$, and this is
exactly $|\det L_{m,\alpha}^\kappa|$. It follows that
\[
\sum_{\alpha\in\tk_\kappa^m/\Lambda}|\det L_{m,\alpha}^\kappa|=|\tk_\kappa^m|=N(|E_m|),
\]
and hence (\ref{pre_hyp-par_main_term}) becomes
\[
\frac{\delta_{m_{u}}\abs{\det \mathcal{E}}}{n}
\sum_{\kappa\in\mathcal{S}}
\left(
\frac{\eta_\kappa A^s}{s}
+\frac{\phi_\kappa A^{1-s}}{1-s}
\right)
\sum_{m\in\Z^{n-1}\setminus \{0\}}
g(\log u_m)
\]
and this (together with (\ref{hyp_par_partial_result}) and (\ref{hyp_par_const_terms})) completes the proof of Theorem \ref{hyp-par_thm}.

\subsection{Extension of \texorpdfstring{$\zeta$}{zeta}-functions corresponding to lattices}

In this section we prove Lemma \ref{zeta_lemma}.
We use the notations of Section \ref{zeta_section} and note that
the following argument is a standard one in analytic number theory 
(we basically copy the proof of Theorem 1.7.2 in \cite{bump1998automorphic})
and hence some details
will be omitted.

It is easy to see that the sum in (\ref{zeta_def}) converges absolutely and locally
uniformly for $\ree s>1$, and this latter condition 
will be assumed in the first part of the proof.
We write the terms of the sum in (\ref{zeta_def})
as follows:
let $0\neq l\in L$, $l=(l^{(1)},\dots,l^{(n)})$, then
\[
\int_0^\infty e^{-\pi x(l^{(k)})^2}x^{\frac{s_k}{2}}\,\frac{dx}{x}=
\frac{1}{\pi^{\frac{s_k}{2}}(l^{(k)})^{s_k}}\int_0^\infty e^{-u}u^{\frac{s_k}{2}}\,\frac{du}{u}
=\frac{\Gamma\left(\frac{s_k}{2}\right)}{\pi^{\frac{s_k}{2}}\abs{l^{(k)}}^{s_k}}
\]
and hence
\[
\int_{(\R^+)^n}
e^{-\pi\tr(\mathbf{x} l^2)}\prod_{k=1}^n x_k^{\frac{s_k}{2}}\,\frac{dx_k}{x_k}
=\pi^{-\frac{ns}{2}}\frac{\lambda_{M,-m}(\abs{l})}{\abs{N l}^s}
\prod_{k=1}^n\Gamma\left(\frac{s_k}{2}\right),
\]
where $\mathbf{x}=(x_1,\dots,x_n)^T$, $\mathbf{x} l^2$ is the coordinate-wise product of $\mathbf{x}$ and $l^2$,
and the trace  $\tr(\cdot)$ of a vector is  the sum of its coordinates. Then
\begin{align*}
\pi^{-\frac{ns}{2}}\left(\prod_{k=1}^n\Gamma\left(\frac{s_k}{2}\right)\right)Z_{L,M}(s,m)&=
\sum_{0\neq l\in L/M}
\int_{(\R^+)^n}e^{-\pi\tr(\mathbf{x}l^2)}\prod_{k=1}^n x_k^{\frac{s_k}{2}}\,\frac{dx_k}{x_k}\\[3mm]
&=\sum_{\eps\in M^2}\int_{(\R^+)^n/M^2}
\left(\sum_{0\neq l \in L/M}e^{-\pi\tr(\eps \mathbf{x}l^2)}\right)
\prod_{k=1}^n (\eps^{(k)}x_k)^{\frac{s_k}{2}}\,\frac{dx_k}{x_k}\\[3mm]
&=\int_{(\R^+)^n/M^2}\left(\sum_{0\neq l\in L}e^{-\pi\tr(\mathbf{x}l^2)}\right)
\prod_{k=1}^n x_k^{\frac{s_k}{2}}\,\frac{dx_k}{x_k}.
\end{align*}
Let us define the following theta function for the lattice $L$:
\[
\Theta_L(\mathbf{x}):=\sum_{l\in L} e^{-\pi \tr(\mathbf{x}l^2)}.
\]
Using this notation we have
\begin{align}\label{XI_integral}
\Xi_{L,M}(s,m):=
\pi^{-\frac{ns}{2}}\left(\prod_{k=1}^n\Gamma\left(\frac{s_k}{2}\right)\right)Z_{L,M}(s,m)
=\int_{(\R^+)^n/M^2}(\Theta_L(\mathbf{x})-1)
\prod_{k=1}^n x_k^{\frac{s_k}{2}}\,\frac{dx_k}{x_k}.
\end{align}

For a fixed $\mathbf{x}\in(\R^+)^n$ we set $f_\mathbf{x}(\mathbf{y})=e^{-\pi\tr(\mathbf{x}\mathbf{y}^2)}$,
its Fourier transform is
\begin{align*}
\hat{f}_\mathbf{x}(\xi)&=\int_{\R^n}f_\mathbf{x}(\mathbf{y})
e^{-2\pi i\left<\mathbf{y},\mathbf{\xi}\right>}\,\mathrm{d}\mathbf{y}=
\prod_{k=1}^n\frac{1}{\sqrt{x_k}} e^{-\pi \xi_k^2/x_k}.
\end{align*}
By the Poisson summation formula we have
\begin{align}\label{tht_eq}
\Theta_L(\mathbf{x})&=\sum_{l \in L}f_\mathbf{x}(l)=
\frac{1}{\mathrm{vol}(\R^n/L)}\sum_{\beta\in L^*}\hat{f_\mathbf{x}}(\beta)
=\frac{1}{\mathrm{vol}(\R^n/L) (N\mathbf{x})^{1/2}}\Theta_{L^*}(1/\mathbf{x}).
\end{align}
Now we split the integral in (\ref{XI_integral}) into two parts depending on $N\mathbf{x}$.
If $N\mathbf{x}<1$, we use (\ref{tht_eq}) and then substitute $1/\mathbf{x}$
to obtain that $\Xi_{L,M}(s,m)$ is
\begin{align*}
&  
\int\limits_{\begin{array}{c}    
    \scriptstyle{(\R^+)^n/M^2}\\
    \scriptstyle{N\mathbf{x}>1}
\end{array}}
(\Theta_L(\mathbf{x})-1)\prod_{k=1}^n x_k^{\frac{s_k}{2}}\,\frac{dx_k}{x_k}
+\dfrac{1}{\mathrm{vol}(\R^n/L) }
\int\limits_{\begin{array}{c} 
    \scriptstyle{(\R^+)^n/M^2}\\
    \scriptstyle{N\mathbf{x}>1}
\end{array}}
(\Theta_{L^*}(\mathbf{x})-1)\prod_{k=1}^n x_k^{\frac{1-s_k}{2}}\,\frac{dx_k}{x_k}\\
&\qquad-
\int\limits_{\begin{array}{c} 
    \scriptstyle{(\R^+)^n/M^2}\\
    \scriptstyle{N\mathbf{x}<1}
\end{array}}
\prod_{k=1}^n x_k^{\frac{s_k}{2}}\,\frac{dx_k}{x_k}
+\dfrac{1}{\mathrm{vol}(\R^n/L)}
\int\limits_{\begin{array}{c} 
    \scriptstyle{(\R^+)^n/M^2}\\
    \scriptstyle{N\mathbf{x}>1}
\end{array}}
\prod_{k=1}^n x_k^{\frac{1-s_k}{2}}\,\frac{dx_k}{x_k}.
\end{align*}
A straightforward computation (similar to the one that led to (\ref{int_calculation})
in the previous proof) shows that
\begin{align*}
\int\limits_{\begin{array}{c} 
    \scriptstyle{(\R^+)^n/M^2}\\
    \scriptstyle{N\mathbf{x}<1}
\end{array}}
\prod_{k=1}^n x_k^{\frac{s_k}{2}}&\,\frac{dx_k}{x_k}
=2^{n-1}\abs{\det \mathcal{E}_M}
\int_{-\infty}^0e^{nsy_0/2}\,dy_0\prod_{j=1}^{n-1}\int_0^1e^{2\pi m_j iy_j}\,dy_j.
\end{align*}
This expression is $0$ unless all coordinates of $m$ are zero, in which case
it is $2^n\abs{\det \mathcal{E}_M}/(ns)$.
Similarly, 
\begin{align*}
\int\limits_{\begin{array}{c} 
    \scriptstyle{(\R^+)^n/M^2}\\
    \scriptstyle{N\mathbf{x}>1}
\end{array}}
\prod_{k=1}^n &x_k^{\frac{1-s_k}{2}}\,\frac{dx_k}{x_k} 
=2^{n-1}\abs{\det \mathcal{E}_M}
\int_{0}^\infty e^{n(1-s)y_0/2}\,dy_0\prod_{j=1}^{n-1}\int_0^1e^{-2\pi m_j iy_j}\,dy_j,
\end{align*}
and as above, this is $0$ if $m\neq0$ and otherwise we get
$\frac{2^{n}\abs{\det \mathcal{E}_M}}{n(s-1)}$ (note that $\ree s>1$ is still assumed here).

Therefore if $m\neq 0$, then $\Xi_{L,M}(s,m)$ is entire and
\begin{equation}\label{func_eq}
\mathrm{vol}(\R^n/L)^{1/2}\Xi_{L,M}(s,m)=\mathrm{vol}(\R^n/L^*)^{1/2}\Xi_{L^*,M}(1-s,-m)
\end{equation}
holds. If $m=0$, then $\Xi_{L,M}(s,m)$ is holomorphic except for $s=1$ and $s=0$, where
it has simple poles with residues 
$\frac{2^n\abs{\det \mathcal{E}_M}}{n\cdot \mathrm{vol}(\R^n/L)}$ and 
$-\frac{2^n\abs{\det \mathcal{E}_M}}{n}$, respectively. The functional equation (\ref{func_eq})  holds also
in this case for any $s\neq0,1$.

One can reorder the equation (\ref{func_eq}) asymmetrically:
\[
Z_{L,M}(s,m)
=
\mathrm{vol}(\R^n/L^*)
\pi^{ns-\frac{n}{2}}\left(\prod_{k=1}^n
\frac{\Gamma\left(\frac{1-s_k}{2}\right)}{\Gamma\left(\frac{s_k}{2}\right)}\right)
Z_{L^*,M}(1-s,-m).
\]
Here 
\[
\frac{\Gamma\left(\frac{1-s_k}{2}\right)}{\Gamma\left(\frac{s_k}{2}\right)}=
\Gamma\left(\frac{1-s_k}{2}\right)\Gamma\left(1-\frac{s_k}{2}\right)
\frac{\sin\frac{\pi s_k}{2}}{\pi}=
2^{s_k}\Gamma\left(1-s_k\right)
\frac{\sin\frac{\pi s_k}{2}}{\pi^{\frac{1}{2}}},
\]
i.e.
\[
Z_{L,M}(s,m)
=
\mathrm{vol}(\R^n/L^*)2^{ns}
\pi^{n(s-1)}\left(\prod_{k=1}^n
\Gamma\left(1-s_k\right)\sin\frac{\pi s_k}{2}
\right)
Z_{L^*,M}(1-s,-m).
\]
If $\ree s$ is bounded and $\abs{\im s}\geq t_0$ for some big enough $t_0>0$ 
then by Stirling's formula
\[
\abs{\Gamma(1-s_k)\sin\frac{\pi s_k}{2}}\asymp\abs{\im s}^{1/2-\ree s}
\]
and hence
$\abs{Z_{L,M}(s,m)}\asymp(\im s)^{n\left(\frac{1}{2}-\ree s\right)}
\abs{Z_{L^*,M}(1-s,-m)}$.
By the Phragmén-Lindelöf principle, it follows  from this and from
the trivial bound $\abs{Z_{L,M}(s,m)}\leq \abs{Z_{L,M}(\ree s,0)}$ for
$\ree s>1$ that
\[
Z_{L,M}(s,m)\ll_{\eps,m}\abs{\im s}^{n(1-\ree s)/2+\eps}
\]
holds for $0\leq \ree s\leq 1$ and $\abs{\im s}\geq t_0>0$. Similarly,
one can bound $Z_{L,M}(s,m)$ by $\abs{\im s}^\eps$ once $\ree s>1$ and $\abs{\im s}\geq t_0$
(or by a constant
if $\ree s>1+\delta$ for some $\delta>0$). This completes the proof of Lemma \ref{zeta_lemma}.

\subsection{Proof in the totally parabolic case}

We proceed by calculating the part of the trace where we sum over parabolic
classes. 
Every such class is represented by an element that fixes a cusp $\kappa\in\mathcal{S}$.
An  element of this type is conjugated by $\sigma_\kappa\in \psl$ to an element of the form
\[
\gamma_\alpha^\kappa :=\mtx{1}{\alpha}{0}{1},
\]
where $0\neq\alpha\in\tk_\kappa$. 
A simple computation shows that two such elements $\gamma_\alpha^\kappa$ and 
$\gamma_\beta^\kappa$ 
are conjugate in $\sigma_\kappa^{-1}\Gamma\sigma_{\kappa}$ if and only if
$\alpha = \eps\beta $ for some $\eps\in\Lambda$ (see also \cite{efrat},
section III.2). Hence summation over parabolic classes means a double summation
over the elements of $\mathcal{S}$ and 
the non-zero elements of $\mathbf{t}_\kappa/\Lambda$.
Therefore the contribution of the parabolic classes in the trace
can be written
as
\begin{align}\label{parabolic_contribution_early_form}
&\sum_{\kappa\in\mathcal{S}}\,
\sum_{0\neq\alpha\in \tk_\kappa/\Lambda}
\sum_{
\begin{array}{c}
\scriptstyle{
\sigma\in C(\gamma) \setminus\Gamma
}\\
\scriptstyle{
\gamma\sim\gamma_{\alpha}^\kappa
}
\end{array}
}
\int_{F_A}
k(z,\sigma^{-1}\gamma\sigma z)u(z)\,d\mu(z)=\nonumber\\[-2mm]
&\qquad
=\sum_{\kappa\in\mathcal{S}}\,
\sum_{0\neq\alpha\in\tk_\kappa/\Lambda}\,
\sum_{\sigma\in C(\gamma_{\alpha}^\kappa)\setminus \sigma_\kappa^{-1}\Gamma\sigma_\kappa}
\int_{\sigma (\sigma_\kappa^{-1} F_A) }k(z,\gamma_{\alpha}^\kappa z)
u(\sigma_\kappa z)\,d\mu(z)
\end{align}
where $C(\gamma_{\alpha}^\kappa)$ is the 
centralizer of $\gamma_{\alpha}^\kappa$ in $\sigma_\kappa^{-1}\Gamma\sigma_\kappa$ given by
\[
C(\gamma_\alpha^\kappa):=\left\{\mtx{1}{\beta}{0}{1}\in PSL(2,\R)^n:\, \beta\in\tk_\kappa\right\},
\]
and its fundamental domain is
$F_{C(\gamma_\alpha^\kappa)}=
\left\{z\in\HH^n:\,0\leq X_1^{\kappa}(z),\dots,X_n^{\kappa}(z)<1\right\}$.

The union of the sets $\sigma (\sigma_\kappa^{-1} F_A)$ in 
(\ref{parabolic_contribution_early_form}) makes up the set
$F_{C({\gamma_{\alpha}^\kappa})}$ 
except for the
images of the part $F\setminus  F_A =F^*_A$. As in the hyperbolic-parabolic case,
for some
cosets the images of $F_A^*$ can be added to the domain we integrate over because 
the kernel function vanishes on those sets.
If $\sigma\in\sigma_\kappa^{-1} \Gamma\sigma_\kappa$ leaves $\infty$ fixed, then 
so does every element in its coset, and
the part 
$\sigma (\sigma_\kappa^{-1}F^*_A)$ is the same as 
\[
\{z\in F_{C({\gamma_{\alpha}^\kappa})}:
        \,\sigma^{-1}z\in   \sigma_\kappa^{-1}\sigma_{\kappa'}U_A,\, 
            \textrm{ for some cusp }\kappa'\in\mathcal{S} \},
\]
at least if $A$ is big enough.
If 
$\kappa\neq\kappa'$, 
then 
$\sigma_{\kappa}^{-1}\sigma_{\kappa'}=\mtx{a}{b}{c}{d}$
does not fix the point $\infty$ and hence $c\neq0$.
Since $\sigma^{-1}\infty = \infty$, the values 
$Y_0(\sigma^{-1}z)$ and $Y_0(z)$ are the same. 
Therefore, if $\sigma^{-1}z\in\sigma_\kappa^{-1}\sigma_{\kappa'} U_A$, then
there is a $w\in U_A$ such that $\sigma^{-1} z=\sigma_\kappa^{-1}\sigma_{\kappa'}w$, and hence
\[
Y_0(z)=Y_0(\sigma^{-1}z)=Y_0(\sigma_\kappa^{-1}\sigma_{\kappa'}w)\leq \frac{1}{(Nc)^2A}.
\]
The inequality above follows easily from the identity 
$\abs{c_kz+d_k}^2\cdot\im (\sigma_\kappa^{-1}\sigma_{\kappa'} z)_k=\im z_k$
that holds for every $k=1,\dots,n$. 
The function $\psi$ is compactly supported, 
hence for a large enough $A$ the kernel
\[
k(z,\gamma_\alpha^\kappa z) = 
\psi\left(\frac{\abs{z_1-(z_1-\alpha_{1})}^2}{y_1^2},\dots,
\frac{\abs{z_n-(z_n-\alpha_{n})}^2}{y_n^2}\right)=
\psi\left(\frac{\alpha_{1}^2}{y_1^2},\dots, \frac{\alpha_{n}^2}{y_n^2}\right)
\]
vanishes for  every $z\in F_{C(\gamma_\alpha)}$ for which $\sigma^{-1}z\in\sigma_\kappa^{-1}\sigma_{\kappa'} U_A$ holds for some
$\kappa\neq\kappa'$,
hence these parts can be simply added to the domain we integrate over.

Now assume that $\sigma$ does not fix the cusp $\infty$. 
Then $\sigma \sigma_\kappa^{-1}\sigma_{\kappa'}$ cannot fix $\infty$, because this would be
equivalent to 
\[
\kappa=(\sigma_\kappa\sigma\sigma_\kappa^{-1}\sigma_{\kappa'})\infty=
(\sigma_\kappa\sigma\sigma_\kappa^{-1})\kappa'.
\]
But $\sigma_\kappa\sigma\sigma_\kappa^{-1}\in\Gamma$ and hence $\kappa\neq\kappa'$ cannot hold, because these two cusps are not equivalent. 
Also, $\sigma_\kappa\sigma\sigma_\kappa^{-1}$
 does not fix $\kappa$, so 
(similarly as above) the parts 
$\sigma\sigma_\kappa^{-1}\sigma_{\kappa'} U_A$ can
be added.

Hence
(\ref{parabolic_contribution_early_form}) becomes
\begin{align*}
&
\sum_{\kappa\in\mathcal{S}}\,
\sum_{0\neq\alpha\in\tk_\kappa/\Lambda}\,\,
\int\limits_{z\in F_{C(\gamma_\alpha^\kappa)}, Y_0(z)\leq A }
\psi\left(\frac{\alpha_{1}^2}{y_1^2},\dots, \frac{\alpha_{n}^2}{y_n^2}\right)
u(\sigma_\kappa z)
\,d\mu(z)=\\[3mm]
&=
\sum_{\kappa\in\mathcal{S}}\,
\sum_{0\neq\alpha\in\tk_\kappa/\Lambda}\,\,
\int\limits_{0\leq X_1^{\kappa},\dots,X_n^\kappa<1 }\,\,\,
\int\limits_{Y_0\leq A}
\psi\left(\frac{\alpha_{1}^2}{y_1^2},\dots, \frac{\alpha_{n}^2}{y_n^2}\right)
u(\sigma_\kappa  z)
\frac{dy_1\ \dots dy_n}{y_1^2\dots y_n^2}
\,dx_1\dots dx_n.
\end{align*}
Using the Fourier expansion of $u(\sigma_\kappa z)$
and that  for an $l\in \tk_{\kappa}^*$ we have
\[
\int\limits_{0\leq X_1^\kappa,\dots, X_n^\kappa<1}e^{2\pi i<l,x>}\,dx_1\dots dx_n =
\left\{
\begin{array}{ll}
     \mathrm{vol}(\R^n/\tk_\kappa),&  \textrm{if }l=0,\\[1mm]
     0 & \textrm{otherwise}, 
\end{array}
\right.
\]
the sum above can be written in the following way:
\[
\sum_{\kappa\in\mathcal{S}}\,
\mathrm{vol}(\R^n/\tk_\kappa)
\sum_{0\neq\alpha\in\tk_\kappa/\Lambda}\,\,
\int\limits_{Y_0\leq A}
\psi\left(\frac{\alpha_{1}^2}{y_1^2},\dots, \frac{\alpha_{n}^2}{y_n^2}\right)
(\eta_\kappa y_1^{s_1}\dots y_n^{s_n}+\phi_\kappa y_1^{1-s_1}\dots y_n^{1-s_n})
\frac{dy_1\dots dy_n}{y_1^2\dots y_n^2}.
\]
The substitution $u_k=\abs{\alpha_{k}}/y_k$  gives then
\begin{align*}
\sum_{\kappa\in\mathcal{S}}\,
\mathrm{vol}(\R^n/\tk_\kappa)
&\sum_{0\neq\alpha\in\tk_\kappa/\Lambda}\,\,
\frac{1}{\abs{N\alpha }}
\int\limits_{\begin{array}{c}\scriptstyle{ 0<u_1,\dots, u_n<\infty}
                                \\\scriptstyle{\abs{N\alpha }\leq Au_1\dots u_n}  
            \end{array}}
\psi\left(u_1^2, \dots, u_n^2\right)\times\\
&\qquad\times\left[\eta_\kappa
\lambda_{m_{u}}(\abs{\alpha})
\frac{\abs{N\alpha}^s}{u_1^{s_1}\dots u_n^{s_n}}
+
\phi_\kappa
\lambda_{-m_{u}}(\abs{\alpha})
\frac{\abs{N\alpha}^{1-s}}{u_1^{1-s_1}\dots u_n^{1-s_n}}
\right]
\,du_1\dots du_n
\end{align*}
where $\abs{\alpha}$ denotes the coordinate-wise absolute value of the vector $\alpha$. 
Hence we have to examine two terms:
\begin{align}\label{2_3_frst_int}
\eta_\kappa \mathrm{vol}(\R^n/\tk_\kappa)\int\displaylimits_{ 0<u_1,\dots, u_n<\infty}
\psi\left(u_1^2,\dots, u_n^2\right)
u_1^{-s_1}\dots u_n^{-s_n}\!\!\!
\sum_{\begin{array}{c}\scriptstyle{0\neq\alpha\in \tk_\kappa/\Lambda}
                    \\\scriptstyle{\abs{N\alpha}\leq Au_1\dots u_n}
            \end{array}
    }
\frac{\lambda_{m_{u}}(\abs{\alpha})}{\abs{N(\alpha)}^{1-s}}
\,\,du_1\dots du_n
\end{align}
and
\begin{align}\label{2_3_scnd_int}
\phi_\kappa \mathrm{vol}(\R^n/\tk_\kappa)\int\displaylimits_{0<u_1,\dots,u_n<\infty}
\psi\left(u_1^2,\dots, u_n^2\right)
u_1^{s_1-1}\dots u_n^{s_n-1}\!\!\!
\sum_{\begin{array}{c}
            \scriptstyle{0\neq\alpha\in \tk_\kappa/\Lambda}   \\
            \scriptstyle{\abs{N\alpha}\leq Au_1 \dots u_n}
        \end{array}
    }
\frac{\lambda_{-m_{u}}(\abs{\alpha})}
{\abs{N(\alpha)}^{s}}
\,\,du_1\dots d u_n.
\end{align}

We express the sums in (\ref{2_3_frst_int}) and (\ref{2_3_scnd_int})
in terms of the zeta functions $Z_\kappa(1-s,-m_u)$ and $Z_\kappa(s,m_u)$, respectively.
By Lemma \ref{zeta_lemma} $Z_\kappa(s,m_u)$ can be 
continued meromorphically to $\C$ 
with simple 
poles at $0$ and $1$  if and only if $m_u=0$, 
and in the latter case its residue at $1$ is 
$\frac{2^n|\det\mathcal{E}|}{n\cdot \mathrm{vol}(\R^n/\tk_\kappa)}$.
By Proposition \ref{zeta_prop}
there is a vector $\nu\in\R^n$ with non-zero coordinates
 such that the coordinates of $\nu\cdot\tk_\kappa$ are
conjugate integers. Hence we may write
\begin{align*}
Z_\kappa(s,m_u)&=
\sum_{0\neq\alpha\in \nu\cdot\tk_\kappa/\Lambda}
\frac{\lambda_{-m_u} (\abs{\alpha}/\nu)}
{\abs{N(\alpha/\nu)}^{s}}
&=
|N\nu|^{s}\sum_{0\neq\alpha\in \nu\cdot\tk_\kappa/\Lambda}
\frac{\lambda_{-m_u} (\abs{\alpha/\nu})}
{\abs{N(\alpha)}^{s}}
=|N\nu|^{s}
\sum_{ k=1}^\infty
\frac{a_{m_u}(k)}
{k^{s}},
\end{align*}
where 
\[
a_{m_u}(k)=\sum_{0\neq \alpha\in \nu\cdot\tk_\kappa/\Lambda,\, \abs{N\alpha}=k}
\lambda_{-m_u} (\abs{\alpha}/\nu).
\]
Since $\Lambda$ is isomorphic to a finite index subgroup of the multiplicative group of 
the units in $\OO_K$,
the latter sum can be estimated from above by the number of integer ideals of norm $k$ in $K$ 
and hence by $\tau(k)^{[K:\Q]}\ll_\delta k^{\delta}$ for any $\delta>0$, where
$\tau(k)$ is the number of divisors of the rational integer $k$.

Now we can  apply Theorem 5.2 and Corollary 5.3 in \cite{montgomery_vaughan} for the function
\[
\alpha_{s,m_u}(S)=\sum_{k=1}^\infty\frac{a_{m_u}(k)}{k^{s+S}}=\frac{Z_\kappa(s+S,m_u)}{|N\nu|^{s+S}}.
\]
If $0<\ree s<1$ and 
 $\sigma_0>1-\ree s$, then
\begin{align*}
\sum_{k\leq A}\mathop{}^{\mkern-5mu '} \frac{a_{m_u}(k)}{k^s} = 
\frac{1}{2\pi i}
\int\limits_{\sigma_0-iT}^{\sigma_0+iT}\alpha_{s,m_u}(S)\frac{A^S}{S}\,d S+R_{s,m_u}
\end{align*}
where $\sum\mathop{}^{\mkern-5mu '}$ indicates that if $A$ is an integer, then the last term
is to be counted with half weight, further
\[
R_{s,m_u}\ll \sum_{
        A/2<k < 2A,\, k\neq A
}
\abs{a_{m_u}(k)}k^{-\ree s}\min\left(1,\frac{A}{T\abs{A-k}}\right)+
\frac{4^{\sigma_0}
+A^{\sigma_0}}{T}\sum_{k = 1}^\infty\frac{\abs{a_{m_u}(k)}}{k^{\sigma_0+\ree s}}.
\]
Hence for any $\sigma_0>\ree s$
the integral in (\ref{2_3_frst_int})
can be rewritten as
\begin{align*}
|N\nu|^{1-s}&
\!\!\!\!\int\displaylimits_{ 0<u_1,\dots, u_n<\infty}
\psi\left(u_1^2,\dots, u_n^2\right)
u_1^{-s_1}\dots u_n^{-s_n}\times\\
&\,\,\qquad\times\left[\frac{1}{2\pi i}
\int\limits_{\sigma_0-iT}^{\sigma_0+iT}\!\!
\alpha_{1-s,-m_u}(S)
\cdot\frac{(|N\nu|Au_1\dots u_n)^S}{S}\,d S+R_{1-s,-m_u}\right]
\,du_1\dots du_n,
\end{align*}
where with the notation $B=|N\nu|Au_1\dots u_n$ we have that $R_{1-s,-m_u}$ is bounded by
\begin{align}\label{Rror}
\sum_{
        B/2<k < 2B,\, k\neq B
}
\abs{a_{-m_u}(k)}k^{\ree s-1}\min\left(1,\frac{B}{T\abs{B-k}}\right)
+\frac{4^{\sigma_0}+B^{\sigma_0}}{T}\sum_{k = 1}^\infty\frac{\abs{a_{-m_u}(k)}}{k^{\sigma_0+1-\ree s}}.
\end{align}

Let us fix an $0<\delta_0<1-\ree s$ 
and use the estimate  $a_{-m_u}(k)\ll k^{\delta_0}$. Also, we 
set the values $\sigma_0 = \ree s+\delta_0+\frac{1}{\log A}$  and $T=A^{\ree s+\delta_1}$ for some $\delta_0<\delta_1<1-\ree s$.
Since $u_k$ is bounded from above for any $1\leq k\leq n$,  
the second term on the right hand side of (\ref{Rror}) is bounded by
$A^{\delta_0-\delta_1}\log A=o(1)$ (as $A\to\infty$), and the implied constant depends on $\delta_0$.

Turning to the first term
we
divide the sum in it into three parts.
The first one is where 
$\abs{B-k}<BT$. 
The terms of this part are of the form $\abs{a_{-m_u}(k)}k^{\ree s-1}\ll k^{\delta_0+\ree s-1}$
hence they 
give at most a constant times
$(BT)\cdot B^{\delta_0+\ree s-1}=
(|N\nu|u_1\dots u_n)^{\delta_0+\ree s}A^{\delta_0-\delta_1}
$, and since $\psi$ is compactly supported this gives an $o(1)$ term in the last integral above.

The second part is where $BT\leq \abs{B-k}<BT+1$,
i.e. it consists of at most two terms
bounded by a constant times
\[
k^{\delta_0+\ree s-1}\frac{B}{T\abs{B-k}}\ll\frac{B^{\delta_0+\ree s}}{T^2B}\leq B^{\delta_0+\ree s-1}=
(|N\nu| A u_1\dots u_n)^{\delta_0+\ree s-1}.
\]
Hence we obtain a term in the integral above that can be bounded by
\[
A^{\delta_0+\ree s-1}\int\limits_{0<u_1,\dots,u_n< C} (u_1\dots u_n)^{\delta_0-1}\,du_1\dots du_n
\]
for some $C>0$, and  the latter integral converges at $0$ 
giving an $o(1)$ term as $A\to\infty$.

The third part is where 
$|BT+1|\leq \abs{B-k}$.
Note that in this case
 $1\leq \abs{B-k}\leq B$,
hence this error term is bounded by
\begin{align*}
B^{\delta_0+\ree s-1}\cdot\frac{B}{T}
\sum_{ 1\leq \abs{B-k}\leq  B}\frac{1}{\abs{B-k}}&\ll (|N\nu|u_1\dots u_n)^{\delta_0+\ree s}A^{\delta_0-\delta_1}\sum_{1\leq k\leq B}\frac{1}{k}
\\&
\ll A^{\delta_0-\delta_1}\max(0,\log B)\ll A^{\delta_0-\delta_1}\log A=o(1).
\end{align*}

Finally, to cover also those cases when $k=|N\nu| Au_1\dots u_n$ is an integer we may add
\[
\frac{1}{2}\cdot\frac{a_{-m_u}(k)}{k^{1-s}}\ll  (|N\nu|Au_1\dots u_n)^{\delta_0+\ree s-1}
\]
to the error term $R_{1-s,-m_u}$, which  also gives an $o(1)$ term.

It follows that aside from an $o(1)$ term the expression in (\ref{2_3_frst_int})
is
\begin{align*}
\eta_\kappa \mathrm{vol}(\R^n/\tk_\kappa)|N\nu|^{1-s}\!\!\!\!\!&\int\displaylimits_{ 0<u_1,\dots, u_n<\infty}
\psi\left(u_1^2,\dots, u_n^2\right)
u_1^{-s_1}\dots u_n^{-s_n}\times\\
&\,\qquad\times\left(\frac{1}{2\pi i}
\int\limits_{\sigma_0-iT}^{\sigma_0+iT}\!\!
\alpha_{1-s,-m_u}(S)
\cdot\frac{(|N\nu|Au_1\dots u_n)^S}{S}\,d S\right)
du_1\dots du_n
\end{align*}
where $\sigma_0 = \ree s+\delta_0+\frac{1}{\log A}$
and 
$T=A^{\ree s+\delta_1}$ for some $\delta_0<\delta_1<1-\ree s$.
Substituting the definition of $\alpha_{1-s,-m_u}(S)$ and interchanging the order of integration this becomes
\begin{align*}
    \frac{\eta_\kappa  \mathrm{vol}(\R^n/\tk_\kappa)}{2\pi i}
\int\limits_{\sigma_0-iT}^{\sigma_0+iT}
F(S)
Z_\kappa(1-s+S,-m_{u})\frac{A^S}{S}
\,dS,
\end{align*}
where
\begin{align*}
F(S)=
\int\limits_0^\infty\dots
\int\limits_0^\infty
\psi\left(u_1^2,\dots, u_n^2\right)
u_1^{S-s_1}\dots u_n^{S-s_n}
\,du_1\dots\, du_n.
\end{align*}
Let us choose a number 
$\ree s-1<\sigma_1<0$
and set $G(S)=F(S)Z_\kappa(1-s+S,-m_{u})\frac{A^S}{S}$.
We shift the line of integration to the line $\sigma_1+it$, then by the residue theorem
\begin{align}\label{residue}
    \frac{1}{2\pi i}
    \int\limits_{\sigma_0-iT}^{\sigma_0+iT}G(S)\,d S=&
Z_\kappa(1-s,-m_{u})F(0)+
\delta_{m_u}\cdot
\frac{2^n|\det\mathcal{E}|}{n\cdot \mathrm{vol}(\R^n/\tk_\kappa)}\cdot\frac{A^{s}}{s}
F\left(s\right)\nonumber \\[1mm]
&-\frac{1}{2\pi i}\left(\,\,
\int\limits_{\sigma_0+iT}^{\sigma_1+iT}G(S)\,dS+
\int\limits_{\sigma_1+iT}^{\sigma_1-iT}G(S)\,dS
+\int\limits_{\sigma_1-iT}^{\sigma_0-iT}G(S)\,dS
\right).
\end{align}

We show that the last three integral above is $o(1)$ as $A\to\infty$. 
Firstly, repeated integration by parts in $F(S)$ with respect to $u_1$ (for example)
gives
\[
G(S)=Z_\kappa(1-s+S,-m_{u})\frac{A^S}{S^{N+1}}
\int\limits_0^\infty\dots
\int\limits_0^\infty
H_{s_1}^{(N)}(u_1,\dots, u_n)
u_1^{S-s_1}\dots u_n^{S-s_n}
\,du_1\dots\,du_n
\]
where $N$ is any positive integer and $H_{s_1}^{(N)}$ is a compactly supported smooth function.

To estimate the integrals  on the right hand side of (\ref{residue}) 
we apply Lemma \ref{zeta_lemma}:
if $0\leq \ree s$, then 
we have
$Z_\kappa(s,-m_u)\ll\abs{\im s}^{n/2+\eps}$
for any $\eps>0$  as $\abs{t}\to\infty$,
hence  on the horizontal segments we have
\begin{align*}
Z_\kappa(1-s+S,-m_{u})\frac{(u_1\dots u_nA)^S}{S^{N+1}}
&\ll\frac{A^{\ree S}}{T^{N+1}}\cdot T^{n/2+\eps}
\ll A^{\ree S+(\ree s+\delta_1)(n/2+\eps-N-1)}.
\end{align*}
Here  (by the compact support of $H_{s_1}^{(N)}$) $u_1,\dots, u_n$ 
can be bounded from above by a constant. 
Choosing an appropriate $N$ it follows that these integrals  give $o(1)$ terms.
  On the vertical line we have
\begin{align*}
Z_\kappa(1-s+S,-m_{u})\frac{(u_1\dots u_nA)^S}{S^{N+1}}
&\ll \frac{A^{\sigma_1}}{(1+\abs{\im S})^{N+1}}\cdot
\abs{\im S}^{n/2+\eps}\ll A^{\sigma_1}\abs{\im S}^{n/2+\eps-N-1}
\end{align*}
if $\abs{\im S}$ is big enough and hence (choosing an $N>n/2+\eps-1$)
\begin{align*}
\int\limits_{\sigma_1+iT}^{\sigma_1-iT}
G(S)\,dS&\ll A^{\sigma_1}=o(1).
\end{align*}

The expression (\ref{2_3_frst_int})
becomes 
\[
\eta_\kappa\mathrm{vol}(\R^n/\tk_\kappa)
Z_\kappa(1-s,-m_{u})F(0)+
\delta_{m_u}\cdot\frac{\eta_\kappa 2^n|\det\mathcal{E}|}{n}\cdot\frac{A^{s}}{s}
F\left(s\right)
+o(1)
\]
One can show similarly that (\ref{2_3_scnd_int}) is
\[
\phi_\kappa \mathrm{vol}(\R^n/\tk_\kappa)
Z_\kappa(s,m_{u})\tilde{F}(0)+
\delta_{m_u}\cdot\frac{\phi_\kappa 2^n|\det\mathcal{E}|}{n}\cdot\frac{A^{1-s}}{1-s}
\tilde{F}\left(1-s\right)
+o(1)
\]
where
\begin{align*}
\tilde{F}(S)=
\int\limits_0^\infty\dots
\int\limits_0^\infty
\psi\left(u_1^2,\dots, u_n^2\right)
u_1^{S+s_1-1}\dots u_n^{S+s_n-1}
\,du_1\dots\, du_n.
\end{align*}

Next we calculate the values $2^nF(s)$ and $2^n\tilde{F}(s)$ in the case $m_u=0$, when
\begin{align*}
2^nF(s)=2^n\tilde{F}(1-s)=
&2^n
\int\limits_0^{\infty}
\dots
\int\limits_0^{\infty}
\psi\left(
    u_1^2,\dots,u_n^2
\right)\prod\limits_{k=1}^ndu_k\\
&=
\int\limits_{0}^{\infty}
\dots
\int\limits_{0}^{\infty}
\frac{\psi(t_1,\dots,t_n)}{\prod\limits_{k=1}^n\sqrt{t_k}}
\,\prod_{k=1}^n\,dt_k
=
g(0,\dots,0).
\end{align*}

Finally, we evaluate $F$ and $\tilde{F}$ at $0$:
\begin{align*}
F(0)&=
\int\limits_0^\infty\dots
\int\limits_0^\infty
\psi(u_1^2,\dots, u_n^2)
u_1^{-s_1}\dots u_n^{-s_n}
\,du_1\dots \,du_n
\\[3mm]
&
=
\frac{1}{2^n}
\int\limits_0^\infty\dots
\int\limits_0^\infty
\frac{\psi(u_1,\dots, u_n)}{\sqrt{u_1\dots u_n}}
u_1^{-\frac{s_1}{2}}\dots u_n^{-\frac{s_n}{2}}
\,du_1\dots \,du_n.
\end{align*}
Using (\ref{2_1_transform_rules}) we get that this is
\begin{align*}
&\frac{(-1)^n}{(2\pi)^n}
\int\limits_{0}^\infty\dots
\int\limits_{0}^\infty
\left(\,
\int\limits_{u_n}^\infty\dots
\int\limits_{u_1}^\infty
\frac{\frac{\partial^n Q}{\partial w_1\dots \partial w_n}(w_1,\dots,w_n)}
     {\sqrt{w_1-u_1}\dots \sqrt{w_n-u_n}}
\,dw_1\dots \,dw_n\right)
\frac{u_1^{-\frac{s_1}{2}}\dots u_n^{-\frac{s_n}{2}}}{\sqrt{u_1\dots u_n}}
\,du_1\dots \,du_n\\[3mm]
&=
\frac{(-1)^n}{(2\pi)^n}
\int\limits_0^\infty\dots
\int\limits_0^\infty
\frac{\partial^n Q}{\partial w_1\dots \partial w_n}(w_1,\dots,w_n)
\left(
\prod_{k=1}^n\int\limits_{0}^{w_k} 
         \frac{u_k^{-\frac{s_k}{2}}}{\sqrt{w_k-u_k}\sqrt{u_k}}
\,du_k
\right)
\,dw_1\dots \,dw_n.
\end{align*}
We have
\[
\int\limits_0^{w}\frac{u^{-\frac{\alpha}{2}}}{\sqrt{w-u}\sqrt{u}}\,du = 
w^{-\frac{\alpha}{2}}
B\left(\frac{1-\alpha}{2},\frac{1}{2}\right) = 
w^{-\frac{\alpha}{2}}
\frac{\Gamma(\frac{1-\alpha}{2})\Gamma(\frac{1}{2})}{\Gamma(\frac{1-\alpha}{2}+\frac{1}{2})}
=w^{-\frac{\alpha}{2}}2^{-\alpha}\frac{\Gamma(\frac{1-\alpha}{2})^2}{\Gamma(1-\alpha)}
\]
for any $w>0$ and $\alpha\in\C$ with $0<\ree \alpha<1$, where $B$ is the beta function and we 
used the following relations:
\[
\Gamma\left(1/2\right)=\sqrt{\pi},\qquad 
\Gamma(z)\Gamma\left(z+\frac{1}{2}\right) = 
2^{1-2z}\sqrt{\pi}\Gamma(2z).
\]
Then $F(0)$ is
\[
\frac{(-1)^n}{(2^{1-s}\pi)^n}
\left(\prod_{k=1}^n
\frac{\Gamma(\frac{1-s_k}{2})^2}{\Gamma(1-s_k)}
\right)
\int\limits_0^\infty\dots
\int\limits_0^\infty
\frac{\partial^n Q}{\partial w_1\dots \partial w_n}(w_1,\dots,w_n)
w_1^{-\frac{s_1}{2}}\dots
w_n^{-\frac{s_n}{2}}
\,dw_1\dots \,dw_n.
\]
We substitute $w_k= e^{x_k}+e^{-x_k}-2$ to express this in terms of the function $g$,
and then the integral above becomes
\[
\int\limits_0^\infty\dots 
\int\limits_0^\infty
\frac{\partial^n g}{\partial x_1\dots \partial x_n}(x_1,\dots, x_n)
\prod_{k=1}^n(e^{x_k}+e^{-x_k}-2)^{-\frac{s_k}{2}}
\,dx_k.
\]
Now
\[
\frac{\partial^n g}{\partial x_1\dots \partial x_n}(x_1,\dots, x_n)
=\frac{(-i)^n}{(2\pi)^n}
\int\limits_0^\infty\dots 
\int\limits_0^\infty
h(r_1,\dots, r_n)r_1\dots r_ne^{-i(r_1x_1+\dots +r_nx_n)}
\,dr_1\dots \,dr_n,
\]
and
\begin{align*}
\int\limits_0^\infty
(e^{x}+e^{-x}-2)^{-\alpha}e^{-irx}
\,dx
&=
\int\limits_0^1
\left(y^{-\frac{1}{2}}-y^{\frac{1}{2}}\right)^{-2\alpha}
y^{ir-1}
\,dy
=
\int\limits_0^1
\left(1-y\right)^{-2\alpha}
y^{\alpha+ir-1}
\,dy\\[3mm]
&=B(\alpha+ir,1-2\alpha)=\frac{\Gamma(\alpha+ir)\Gamma(1-2\alpha)}{\Gamma(1-\alpha+ir)}.
\end{align*}
We conclude that $F(0)$ is
\[
\left(\frac{i}{2^{2-s}\pi^2}\right)^n
\left(
    \prod_{k=1}^n
        \Gamma\left(\frac{1-s_k}{2}\right)^2
\right)
\int\limits_0^\infty\dots
\int\limits_0^\infty
h(r_1,\dots, r_n)\prod_{k=1}^n 
\frac{r_k\Gamma(\frac{s_k}{2}+ir_k)}{\Gamma(\frac{2-s_k}{2}+ir_k)}
\,dr_k.
\]
Similarly, $\tilde{F}(0)$ is
\[
\left(\frac{i}{2^{s+1}\pi^2}\right)^n
\left(
    \prod_{k=1}^n
        \Gamma\left(\frac{s_k}{2}\right)^2
\right)
\int\limits_0^\infty\dots
\int\limits_0^\infty
h(r_1,\dots, r_n)\prod_{k=1}^n 
\frac{r_k\Gamma(\frac{1-s_k}{2}+ir_k)}{\Gamma(\frac{s_k+1}{2}+ir_k)}
\,dr_k,
\]
and this completes the proof of Theorem \ref{parabolic_thm}.

\bibliographystyle{abbrv}
\bibliography{references}

\end{document}